\newtheoremstyle{break}
  {\topsep}{\topsep}%
  {\itshape}{}%
  {\bfseries}{}%
  {\newline}{}%
\theoremstyle{break}
\newtheorem{theorem}{Theorem}
\newtheorem{remark}{Remark}
\newtheorem{definition}{Definition}
\newtheorem{lemma}{Lemma}
\newtheorem{example}{Example}
\newtheorem{corollary}{Corollary}
\newtheorem{algorithm}{Algorithm}
\title{Simultaneous Statistical Inference for Second  Order Parameters of Time Series under Weak Conditions}
\author{Yunyi Zhang \and Efstathios  Paparoditis \and Dimitris N. Politis}
\date{\begin{small}
School of Data Science, The Chinese University of Hong Kong, Shenzhen, Guangdong, China zhangyunyi@cuhk.edu.cn\\
\hspace*{-0.15cm} Department of Mathematics and Statistics, University of Cyprus, Nicosia,   CYPRUS,  stathisp@ucy.ac.cy\\
Department of Mathematics and Halicioglu Data Science Institute, University of California--San Diego,
La Jolla, US., dpolitis@ucsd.edu \end{small}
}
\begin{document}
\maketitle
\begin{abstract}
Strict stationarity is a   common assumption used in the time series literature in order to derive
 asymptotic distributional  results for   second-order statistics, like sample autocovariances and  sample autocorrelations. Focusing on  weak stationarity,
  this paper derives the asymptotic distribution of  the maximum of sample autocovariances and  sample autocorrelations under weak conditions by using  Gaussian approximation techniques.
The asymptotic theory for parameter estimation obtained by fitting a  (linear) autoregressive model to a general weakly stationary time series is revisited and  a Gaussian approximation theorem for the maximum of the estimators of the  autoregressive  coefficients  is derived.
To perform statistical inference for the second order parameters  considered, a  bootstrap algorithm, the so-called second-order wild bootstrap,  is applied.  Consistency of this bootstrap procedure is proven.
In contrast to existing bootstrap alternatives,   validity  of  the second-order  wild bootstrap does not require the imposition of strict stationary conditions  or structural process assumptions,  like linearity.
The good finite sample performance of the second-order wild bootstrap is demonstrated by means of simulations.
\end{abstract}
\section{Introduction}
Stationarity is a fundamental assumption in time series analysis which enables  statistical  inference  for stochastic processes based on  observations  obtained over  time.
Assuming that the observed time series stems  from a strict stationary process, researchers have developed different techniques to analyze  time series  data, like fitting    linear or nonlinear models,
performing statistical inference in the time
or in the frequency domain,
and forecasting  future observations of the process;
see the monographs of \cite{MR1093459}, \cite{MR1884963}, \cite{10.1093/acprof:oso/9780199641178.001.0001}, and \cite{time_series} for an overview. One way to relax the strict stationarity assumption  is  by allowing for   local  (strict) stationarity, that is by  enabling a time varying dependence structure,  which however, can locally (in time) be well approximated by that of a (strictly) stationary process; see \cite{MR1429916}, \cite{MR2190207}, \cite{MR3097614}, \cite{MR3920364}, \cite{MR4270034}.

Strict stationary, which requires that the joint distributions of  two segments $(X_{t_1},...,X_{t_k})^T$ and  $(X_{t_1 + h},...,X_{t_k + h})^T$ are the same for any $k = 1,2,...,\ t_1,...,t_k\in\mathbf{Z}$ and  any $h\geq 0$,  is quite restrictive and hard to verify in practice. In contrast, weak stationary  is more tractable since it only imposes restrictions   on  the time behavior  of the first and of the second order moments of the process, i.e., it requires that    $\mathbf{E}X_i = \mathbf{E}X_0$ and $\mathbf{E}X_iX_{i + j} = \mathbf{E}X_0X_j$ for any $i\in\mathbf{Z},\ j = 0,1,...$. Weak stationarity is easier to verify in practice and several proposals exists in the literature; see  among others, \cite{MR269062}, \cite{MR2724865}, \cite{MR3124795}, and \cite{MR3439530}.
Although  the definition of  second-order characteristics, like  autocovariances,  autocorrelations and  spectral densities (see \cite{MR1093459}), only requires  weak stationarity,  to  derive  distributional results for  sample estimators of the aforementioned  quantities,  strict stationary assumptions are
   commonly imposed  (e.g., \cite{MR2036395}, \cite{MR3235390}, and \cite{MR4325664}).
Furthermore, and as   Example~\ref{exp1} below demonstrates,  two time series may be  weakly stationary  having  identical
second order properties,  but the asymptotic distribution of   sample  quantities, like autocovariances and autocorrelations,   may be  quite different.

\begin{example}
Consider a first order autoregressive process, (AR(1)),
$X_i = \rho X_{i - 1} + \epsilon_i$.
Suppose the random variables $e_i, i\in\mathbf{Z}$,  are i.i.d. Gaussian  with mean $0$ and variance $1$ and consider the following three cases for the  white noise innovations $\epsilon_i$ driving the AR(1) process: 1. Linear: $\epsilon_i = e_i$; 2. Nonlinear: $\epsilon_i  =  e_i\cdot e_{i - 1}$; 3. Non-stationary: $\epsilon_{2i } = e_{2i}$ and $\epsilon_{2i + 1} = e_{2i+1}e_{2i}$. It holds true that
$\epsilon_i$ is white noise, i.e., that,  $\mathbf{E}\epsilon_i = 0$, $\mathbf{E}\epsilon_i\epsilon_j = 0$ for $i\neq j$ and $\mathbf{E}\epsilon^2_i = 1$. However, since $\mathbf{E}\epsilon^2_i\epsilon^2_{i - 1} = \mathbf{E}e^2_i\times \mathbf{E}e^2_{i - 2}\times \mathbf{E}e^4_{i - 1} = 3 \neq 1$, the innovations in Case 2 are dependent while those  in Case 3 are non-stationary.
Assume  $\vert\rho\vert < 1$ and observe that  $\mathbf{E}X_i = 0$ and $\mathbf{E}X_iX_{i+k} = \rho^k/(1 - \rho^2)$, $k \geq 0$, for all types of innovations considered. Table \ref{simu1} shows  for the case $\rho=0.7$, the variance of the Yule-Walker estimator $\widehat{\rho}=\widehat{\gamma}_h/\widehat{\gamma}_0$ as well as that of the sample autocovariance $\widehat{\gamma}_h = n^{-1}\sum_{i = h+1}^n X_iX_{i-h}$ for $ h =1$,   using  $n=10000$ and $n=100000$, observations.


\begin{table}[htbp]
\centering
\small
\caption{Estimates of the variance  of the  Yule-Walker estimator $\widehat{\rho}$ and of the  sample autocovariance $\widehat{\gamma}_1$ for the different types of white noise innovations considered.}
\begin{tabular}{l c c c c c}
\hline\hline
& & & & & \\
Innovations & $n$ & $\widehat{\rho}$ & $Var(\sqrt{n}(\widehat{\rho} - \rho))$ & $\widehat{\gamma}_1$ & $Var(\sqrt{n}(\widehat{\gamma}_1 - \gamma_1))$ \\
& & & & & \\
\hline
\textit{Linear} & 10000 & 0.70 & 0.52 & 1.45 & 20.74\\
                         & 100000 & 0.70 & 0.52 & 1.37  & 21.00\\
                         & & & & & \\
\textit{Nonlinear} & 10000 & 0.69 & 1.03 & 1.31 & 57.34\\
                                     & 100000 & 0.70 & 1.04 & 1.40 & 57.54\\
                                     & & & & & \\
\textit{Non-stationary} & 10000 & 0.71 & 1.58 & 1.49 & 70.51\\
                                    & 100000 & 0.70 & 1.53& 1.40 & 70.68\\
 \hline\hline
\end{tabular}
\label{simu1}
\end{table}

\label{exp1}
\end{example}

It is  a challenging issue  to  investigate the question whether it is possible to derive distributional results  in a time series context by  avoiding strict stationary assumptions, that is by   solely assuming  that the
observed time series is  generated by a weakly (but not necessarily strictly) stationary stochastic process. There have been several attempts in this direction in the literature; see among others  \cite{MR2893863}, \cite{MR2827528}, \cite{MR3931381}, \cite{MR4134800}, and  \cite{MR4206676}. However, to the best of our knowledge, there is relatively little research on deriving the asymptotic  distribution of
second-order statistics (like sample  autocovariances and sample autocorrelations)  and on bootstrapping a weakly but not necessarily strictly stationary time series. One of the contributions of this paper is  to show that for particular classes of statistics, like for the distribution of the maximum  of the aforementioned statistics, it is possible to derive distributional results by only invoking  weak stationarity assumptions.

Related to the above  inference problem is that  of deriving the   asymptotic properties of   parameter estimators when  an autoregressive model of order $p$,  ($AR(p)$ for simplicity),  that is,  the model,
\begin{equation}
X_t = \sum_{j = 1}^p a_jX_{t - j} + \epsilon_t,\ t\in\mathbf{Z},
\label{eq.def_AR_introduction}
\end{equation}
is fitted to a time series at hand.
Here the $a_j$'s  are fixed coefficients and the $\epsilon_t$ are  assumed to be white noise innovations, i.e., they are uncorrelated (not necessarily independent), zero mean  random variables with constant and positive  variance.
 The AR model (\ref{eq.def_AR_introduction}) reveals a linear regression relation between an observation $X_t$ and the previous observations $X_{t - j}, j = 1,...,p$, and is easy to fit, e.g., through solving the well-known system of Yule-Walker equations;
 see   Section 8.1 in \cite{MR1093459} and    \cite{MR2755014}. Since model (\ref{eq.def_AR_introduction})  is informative and easy to analyze, there has been an abundant literature on fitting  autoregressive models.
 However, it is common in the time series literature
 to assume  that  the observed time series  indeed  stems from  a linear process or   that equation  \eqref{eq.def_AR_introduction} holds true with the innovations  $\epsilon_t$ being i.i.d.. This  allows for the  use of the results obtained in  \cite{MR1093459} in order to infer properties of the autoregressive parameters. However, imposing structural  assumptions may be quite misleading  while   linearity assumptions are difficult to verify in practice and, as demonstrated in  Example~\ref{exp1},
 the asymptotic distribution of   Yule-Walker estimators $\widehat{\rho}$   may be quite different depending on the stochastic properties  of the white noise innovations $ \epsilon_t$  driving equation (\ref{eq.def_AR_introduction}).

Notably, one  can think of equation  (\ref{eq.def_AR_introduction}) as a truncated version of an autoregressive representation of the underlying process which has a rich background beyond the linear process class.
Similar to the Wold decomposition, \cite{MR2893863}
used the fact that stationary process  which possess a strictly positive and bounded spectral density,   obey  an autoregressive representation, i.e., they satisfy
eq.\eqref{eq.def_AR_introduction} with $p  =\infty$ and  $\epsilon_t$ being  white noise  instead of
i.i.d.   Notice that such an autoregressive representation of a weakly stationary process is also  useful for prediction. To elaborate,  let $\mathcal{H}_{t-1,t-p}$ be the Hilbert space
spanned by $X_{t-1}, X_{t-2},...,X_{t-p}$ with the definition of inner product $< a, b > = \mathbf{E}a\times b$. Then it is well known that the (mean square) optimal linear predictor of $X_t$ is
given by $\sum_{j = 1}^p \beta_jX_{t-j}$, where the coefficients  $\beta_j, j = 1,...,p$, satisfy the Yule-Walker equations. Hence, fitting an $AR(p)$ model to a (non-necessarily linear) weakly stationary time series may be  justifiable and seems to be helpful in
understanding the linear dependence structure of the underlying process or  for making prediction.  However, since  we do not want to assume that the underlying  time series is strictly stationary and linear, the asymptotic distribution of the corresponding Yule-Walker estimator is  different from the one valid  in the linear process case.  In this  paper we will  focus on  the asymptotic distribution of the maximum of
Yule-Walker estimators under weak stationarity assumptions and will discuss some important applications to inference. Moreover, we will apply a bootstrap procedure to automatically obtain   consistent, simultaneous confidence intervals for the autoregressive  parameters in (\ref{eq.def_AR_introduction}).

The  paper is organized as follows. Section \ref{section.medium_range} lays some foundations and introduces a new kind of non-stationary random variables, called the $(m,\alpha,\beta)$-medium range dependent random variables. Some useful properties of such sequences of  random variables are discussed. Section
\ref{section.autocovariance} derives  the asymptotic distribution of the maximum of sample autocovariances and  sample autocorrelations using Gaussian approximation techniques. Section \ref{section.AR_coef}
uses the \mbox results \mbox  obtained in the previous section to obtain the  asymptotic distribution of the  maximum of Yule-Walker estimators when an AR(p) model is fitted to a general stationary time series. Section \ref{section.bootstrap} considers  simultaneous statistical inference  for
autocovariances,  autocorrelations, and   coefficients of  a linear  autoregressive model fitted to a general stationary time series. Furthermore,
a bootstrap algorithm,
called  the `second-order wild bootstrap', is proposed,  which is  used to  perform statistical inference for the aforementioned second order parameters.
Section \ref{section.numerical_experiment} presents some  numerical results for simulated and real-life data,  which demonstrate   the finite sample performance
of the proposed bootstrap procedure. Conclusions are given in Section \ref{section.conclusion}  while all proofs and technical calculations are deferred  to the Supplementary  Material.

\textbf{Notation: } This paper uses the  standard order notation $O(\cdot),\ o(\cdot),\ O_p(\cdot),\ o_p(\cdot)$. That is, for two numerical sequences $a_n, b_n, n = 1,2,...$, we write $a_n = O(b_n)$ if there exists a constant $C > 0$ such that $\vert a_n\vert\leq C\vert b_n\vert$ for all $n$; and $a_n = o(b_n)$ if $\lim_{n\to\infty}a_n/b_n = 0$. For two sequences of random variables $X_n, Y_n, n = 1,2,...$, we write $X_n = O_p(Y_n)$ if for any $\varepsilon>0$,  there exists a constant $C_\varepsilon$ such that $Prob\left(\vert X_n\vert\leq C_\varepsilon \vert Y_n\vert\right)\geq 1 - \varepsilon$ for any $n$; and $X_n = o_p(Y_n)$ if $X_n / Y_n\to_p 0$ where the latter denotes convergence in probability.
All order notations as well as  convergence results are understood to hold true as the sample size $n\to\infty$. The symbol $\exists $ and $\forall$ respectively represents `there exists' and `for all'. For a vector $a = (a_1,...,a_p)^T\in\mathbf{R}^p$, we use  the norm $\vert a\vert_q = (\sum_{i = 1}^p \vert a_i\vert^q)^{1/q}$ with $q\geq 1$ and we define  $\vert a\vert_\infty = \max_{i = 1,...,p}\vert a_i\vert$. For a matrix $M\in\mathbf{R}^{p\times p}$,  $\vert T\vert_2 = \max_{\vert a\vert_2 = 1}\vert Ta\vert_2$ denotes   the   operator norm. For a finite set $A$,  $\vert A\vert$ denotes the number of elements of  $A$ and for  a random variable $X$, we define its $m$ norm as $\Vert X\Vert_m = (\mathbf{E}\vert X\vert^m)^{1/m}$, where  $m\geq 1$. Furthermore,   $a\vee b = \max(a,b)$ and $a\wedge b = \min(a,b)$. Finally, we  use the notation $C$, $C^\prime$, $C^{\prime\prime}$,  to represent generic constants, i.e., the value of  $C$, $C^\prime$, $C^{\prime\prime}$ may be different  in different places.

\section{Medium range dependent random sequences}
\label{section.medium_range}
This section introduces a new kind of dependence for possible   non-stationary random variables,  called the $(m,\alpha,\beta)-$ medium range dependence  and  derives some useful properties of  sequences of  random variables satisfying this kind of dependence. In particular, we will establish  a Gaussian approximation theorem  and a consistent estimation result for  the covariance matrix of  linear combinations of $(m,\alpha,\beta)-$ medium range dependent random variables. Deriving the asymptotic distribution of linear combinations of random variables is an important  step in  investigating distributional properties of  many types of statistics not only for time series but also for i.i.d. data, for instance like those appearing in  linear and nonlinear regression.
Therefore, the results presented in this section are  of interest on their own.

Let  $e_i, i\in\mathbf{Z}$ be  independent (but non-necessarily identically distributed) random variables and consider  random variables $X_{i,j}$ generated as
\begin{equation}
X_{i,j} = g_{i,j}(..., e_{i - 2}, e_{i - 1}, e_i),\ \text{for $i\in\mathbf{Z}$ and } j = 1, 2,...,d.
\label{eq.defEps}
\end{equation}
Suppose the observed data consists of the set  $X_{i,j}$ for  $i = 1,2,...,T$ and $j = 1,2,...,d$. Since the functions $g_{i,j}$ can vary with respect to the indices  $i$ and $j$ and the $e_i$ may have different distributions for different $i$,  equation (\ref{eq.defEps}) can be used to  describe a variety of time series, also including    non-stationary time series.
In fact, our approach  allows the measurable functions $g_{i,j}$ to change with respect to the sample size $T$, i.e., \textit{$g_{i,j}$,  $ i\in\mathbf{Z}$, $ j = 1,2, ...,d$,  may vary with respect to different sample size $T$}. To simplify notation, in the following we will not explicitly stress the dependency of $g_{i,j}$ on $T$  and
if $d = 1$, then we will omit the subscript $j$ and simple write  $X_i$ for $X_{i, 1}$.

\begin{remark}
The generating equation  \eqref{eq.defEps} has been  used in  \cite{MR2172215}, and has become a useful framework especially when dealing  with nonlinear time series, see e.g.,
\cite{MR2351105}, \cite{MR3319142}, \cite{MR3310530}, and \cite{MR3819347}. In a previous form  with $g$ does not depending on $i$ and $j$, such an  equation   was  used  to model univariate, strictly stationary time series.  Recently there have been generalizations to vector or high dimensional time series, e.g, \cite{MR3718156}; and non-stationary time series,  where $g$ is allowed to vary with $i$ and $j$. In this section
we also generalize results given in \cite{MR2172215} to  non-stationary time series.
\end{remark}

\begin{example}[Locally stationary process]
Recently, \cite{MR4206676} introduced a framework for  locally stationary process that can be considered as a special case of \eqref{eq.defEps}. To elaborate, consider  $d$ measurable functions $f_j(x, ..., y_2, y_1)$,   $ j = 1,...,d$, and    assume  that $X_{i,j} = f_j(i/T, ..., e_{i - 1}, e_i)$. Setting   $g_{i,j}(\cdot) = f_j(i/T, \cdot)$, such a  locally stationary process  is a  special case of \eqref{eq.defEps}.
\end{example}

Define a  sequence of random variables  $e_i^\dagger,\ i\in\mathbf{Z}$,  such that $e_i^\dagger$ are independent from  each other, $e_i^\dagger$ has the same distribution as $e_i$, and $e_i, e^\dagger_j$ are mutually independent for any $i,j\in\mathbf{Z}$. Define the filter $\mathcal{F}_i$ as the $\sigma$-field generated by $..., e_{i - 2}, e_{i - 1}, e_i$ and $\mathcal{F}_{i,k}$ as the $\sigma$-field generated by $e_{i - k}, e_{i - k + 1},...,e_i$, where $i\in\mathbf{Z}$ and $k\geq 0$. Using  these definitions,     $X_{i,j}$ is $\mathcal{F}_i$ measurable. For any $k\in\mathbf{Z}$, define
\begin{equation}
\begin{aligned}
X_{i,j}(k) =
\begin{cases}
g_{i,j}(..., e_{i - k - 1}, e^\dagger_{i - k}, e_{i - k + 1},..., e_i)\ \text{if $k\geq 0$}\\
X_{i,j}\ \text{if $k < 0$.}
\end{cases}
\end{aligned}
\label{eq.defX_ik}
\end{equation}
Furthermore, for any $m\geq 1$, define
\begin{equation}
\delta_{i,j,m}(k) = \Vert X_{i,j} - X_{i,j}(k)\Vert_m\ \text{and } \delta_{m}(k) = \sup_{i\in\mathbf{Z}, j = 1,...,d}\delta_{i,j,m}(k)
\end{equation}
Similar to \eqref{eq.defEps}, we omit the subscript $j$ if $d = 1$, i.e., we write  $\delta_{i,m}(k) = \Vert X_i - X_i(k)\Vert_m$, (recall that $X_i = X_{i,1}$ if $d = 1$). According to \eqref{eq.defX_ik}, $\delta_{i,j,m}(k) = \delta_m(k) = 0$ if $k < 0$. Definition \ref{def.medium_range} introduces the kind of dependent random variables considered  in this paper.

\begin{definition}[$(m,\alpha,\beta)$-medium range dependent random variables]
Suppose that the  random variables $X_{i,j}, i\in\mathbf{Z}, j = 1,...,d$ satisfy \eqref{eq.defEps}. Let $X_{i,j}, i = 1,2,...,T$, $j =  1,...,d$ be  the observed time series and $m,\alpha,\beta$   constants such that $m\geq 4,\ \alpha > 1,\ \beta\geq 0$. We say that the random variable $X_{i,j}$ are $(m,\alpha,\beta)$-medium range dependent  if
\begin{equation}
\begin{aligned}
\mathbf{E}X_{i,j} = 0\ \text{for all $i,j$}\\
\sup_{i\in\mathbf{Z}, j = 1,...,d}\Vert X_{i, j}\Vert_m = O(1)\\
\text{and } \sup_{k = 0,1,...}(1 + k)^\alpha\sum_{l = k}^\infty \delta_m(l) = O(T^\beta)
\end{aligned}
\label{eq.def_medium_range}
\end{equation}
\label{def.medium_range}
\end{definition}

\begin{remark}[$(m,\alpha)$-short range dependent random variables]
\label{remark.short_range}
A special case of  Definition \ref{def.medium_range} is when $\beta = 0$, i.e., $\sup_{k = 0,1,...}(1 + k)^\alpha\sum_{l = k}^\infty \delta_m(k) = O(1)$. If this happens, we call $X_{i,j}$,  $(m,\alpha)$-short range dependent random variables.
\end{remark}

\begin{remark}
    Assume that $e_i, i\in\mathbf{Z}$ are independent and identically distributed, and $g_{i,j} = g_j$, i.e., $g_{i,j}$ does not change with respect to the index $i$. In that case,
    \begin{align*}
        \delta_{i,j,m}(k) = \Vert X_{k, j, m} - X_{k,j,m}(k)\Vert_m\ \text{is independent of } i,
    \end{align*}
    and the third line in eq.\eqref{eq.def_medium_range} coincides with \cite{MR2172215}.
\end{remark}

Suppose that $i_1 < i_2$. Then,  from Cauchy-Schwarz's inequality, we have,
\begin{equation}
\begin{aligned}
\vert\mathbf{E}X_{i_1, j_1}X_{i_2, j_2}\vert = \vert\mathbf{E}X_{i_1, j_1}(X_{i_2, j_2} - \mathbf{E}X_{i_2, j_2}|\mathcal{F}_{i_2, i_2 - i_1 - 1})\vert\\
\leq C\Vert X_{i_2, j_2} - \mathbf{E}X_{i_2, j_2}|\mathcal{F}_{i_2, i_2 - i_1 - 1}\Vert_m\leq C\sum_{s = i_2 - i_1}^\infty\delta_{i_2, j_2, m}(s)\leq \frac{C^\prime T^\beta}{(1 + i_2 - i_1)^\alpha}.
\end{aligned}
\label{eq.cov}
\end{equation}
That is,  Definition \ref{def.medium_range} implies that the autocovariances $\gamma_{i_1,i_2}$ have a polynomial decay with respect to the time lag  $i_2-i_1$.

We next  derive some useful properties of $(m,\alpha,\beta)$-medium range dependent random variables. The first result gives a bound for the moments of linear combinations of such random variables. This result is similar to that of \cite{MR0133849} where the corresponding bounds  assume that the random variables involved are
 independent.

\begin{lemma} Suppose $X_{i,j}, i\in\mathbf{Z}, j = 1,...,d$ are $(m,\alpha,\beta)$- medium range dependent random variables with $m\geq 4$, $\alpha > 1$ and $\beta\geq 0$. Let   $X_{1,j},...,X_{T,j}, j = 1,...,d$, be  the time series observed and  let $a_1,...,a_T\in\mathbf{R}$. Then,
\begin{equation}
\begin{aligned}
(i)\  \ \max_{j = 1,...,d}\Vert\sum_{i = 1}^T a_iX_{i,j}\Vert_m = O\left(T^\beta\times \sqrt{\sum_{i = 1}^T a^2_i}\right)\\
(ii)\ \  \sup_{j = 1,...,d, s \geq 0}(1 + s)^\alpha\Vert\sum_{i = 1}^T a_i(X_{i,j} - \mathbf{E}X_{i,j}|\mathcal{F}_{i,s})\Vert_m = O\left(T^\beta\times \sqrt{\sum_{i = 1}^T a^2_i}\right),
\end{aligned}
\label{eq.linear_comb}
\end{equation}
where  $s\geq 0$ is an integer.
\label{lemma.linear_comb}
\end{lemma}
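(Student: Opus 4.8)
The plan is to treat both bounds through a single martingale-type decomposition adapted to the window filtrations $\mathcal{F}_{i,k}$, and then to reduce each $L^m$ estimate to a convolution inequality. First I would introduce, for $l\geq 0$, the one-step window increments
\begin{equation*}
D_{i,j,l} := \mathbf{E}(X_{i,j}|\mathcal{F}_{i,l}) - \mathbf{E}(X_{i,j}|\mathcal{F}_{i,l-1}),
\end{equation*}
with the convention $\mathcal{F}_{i,-1}=\{\emptyset,\Omega\}$, so $\mathbf{E}(X_{i,j}|\mathcal{F}_{i,-1})=\mathbf{E}X_{i,j}=0$. The first task is the pointwise bound $\Vert D_{i,j,l}\Vert_m\leq\delta_m(l)$. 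This follows from the coupling identity $\mathbf{E}(X_{i,j}|\mathcal{F}_{i,l-1})=\mathbf{E}(X_{i,j}(l)|\mathcal{F}_{i,l})$ (replacing $e_{i-l}$ by its independent copy $e^\dagger_{i-l}$ and integrating it out leaves the conditional expectation on $\mathcal{F}_{i,l-1}$ unchanged), whence $D_{i,j,l}=\mathbf{E}(X_{i,j}-X_{i,j}(l)|\mathcal{F}_{i,l})$ and conditional Jensen gives $\Vert D_{i,j,l}\Vert_m\leq\Vert X_{i,j}-X_{i,j}(l)\Vert_m=\delta_{i,j,m}(l)\leq\delta_m(l)$. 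By martingale convergence, $X_{i,j}=\sum_{l\geq 0}D_{i,j,l}$ and $X_{i,j}-\mathbf{E}(X_{i,j}|\mathcal{F}_{i,s})=\sum_{l\geq s+1}D_{i,j,l}$ in $L^m$, so $(i)$ and $(ii)$ become the cases of summing over $l\geq 0$ and over $l\geq s+1$, respectively.

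The crucial step is to organise the resulting double sum by the innovation driving each increment. Writing $a_i=0$ for $i\notin\{1,\dots,T\}$ and substituting $i=t+l$, for $(ii)$ I would set
\begin{equation*}
U_t := \sum_{l\geq s+1} a_{t+l}\,D_{t+l,j,l},\qquad\text{so that}\qquad \sum_{i=1}^T a_i\big(X_{i,j}-\mathbf{E}(X_{i,j}|\mathcal{F}_{i,s})\big)=\sum_{t} U_t.
\end{equation*}
Each increment $D_{t+l,j,l}$ is a function of $e_t,\dots,e_{t+l}$ and satisfies $\mathbf{E}(D_{t+l,j,l}|\mathcal{F}_{t+l,l-1})=0$, i.e. it has conditional mean zero given $e_{t+1},\dots,e_{t+l}$. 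Consequently $U_t$ is measurable with respect to $\mathcal{B}_t:=\sigma(e_u:u\geq t)$ and $\mathbf{E}(U_t|\mathcal{B}_{t+1})=0$; that is, $\{U_t\}$ is a martingale difference sequence with respect to the \emph{backward} filtration $\{\mathcal{B}_t\}$. Burkholder's inequality (applied after the reflection $t\mapsto -t$, which turns $\{\mathcal{B}_t\}$ into an increasing filtration) together with Minkowski's inequality in $L^{m/2}$ then yields $\Vert\sum_t U_t\Vert_m\leq C_m(\sum_t\Vert U_t\Vert_m^2)^{1/2}$.

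It remains to bound the right-hand side by a convolution estimate. By the triangle inequality and $\Vert D_{t+l,j,l}\Vert_m\leq\delta_m(l)$ one has $\Vert U_t\Vert_m\leq\sum_{l\geq s+1}|a_{t+l}|\,\delta_m(l)$, and Young's (equivalently Minkowski's) inequality for the convolution of $(|a_i|)$ with the kernel $l\mapsto\delta_m(l)\mathbf{1}\{l\geq s+1\}$ gives $\sum_t\Vert U_t\Vert_m^2\leq(\sum_{l\geq s+1}\delta_m(l))^2\sum_{i=1}^T a_i^2$. The medium range condition with index $k=s+1$ supplies $\sum_{l\geq s+1}\delta_m(l)\leq C T^\beta/(1+s)^\alpha$, so that $(1+s)^\alpha\Vert\sum_i a_i(X_{i,j}-\mathbf{E}(X_{i,j}|\mathcal{F}_{i,s}))\Vert_m=O(T^\beta\sqrt{\sum_i a_i^2})$ uniformly in $s$ and $j$, proving $(ii)$; part $(i)$ is identical with the summation extended to all $l\geq 0$, where $\sum_{l\geq 0}\delta_m(l)=O(T^\beta)$.

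The main obstacle is precisely obtaining the decay factor $(1+s)^{-\alpha}$ in $(ii)$. A naive decomposition along the forward filtration $\{\mathcal{F}_k\}$ does not work: the projections of $X_{i,j}-\mathbf{E}(X_{i,j}|\mathcal{F}_{i,s})$ onto the innovations inside the window $[i-s,i]$ do not cancel and have norms of order $\delta_m(i-k)$, which do not decay with $s$, so that route only recovers the $O(T^\beta)$ bound of part $(i)$. Reindexing by the activating innovation $e_t$ and exploiting the backward martingale structure is what converts the estimate into a clean convolution against the \emph{tail} kernel $\delta_m(l)\mathbf{1}\{l\geq s+1\}$, and it is this reorganisation, rather than any delicate inequality, that produces the sharp rate. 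The remaining points are routine and I would only verify them briefly: the $L^m$ convergence of the increment expansions (guaranteed by $\sum_l\delta_m(l)<\infty$) and the interchange of the finite $i$-sum with the absolutely convergent $l$-sum.
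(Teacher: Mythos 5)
Your proof is correct and follows essentially the same route as the paper's: the identical telescoping of $X_{i,j}$ into window-filtration increments $D_{i,j,l}$, the coupling bound $\Vert D_{i,j,l}\Vert_m\leq\delta_m(l)$ (which the paper obtains via a $\pi$--$\lambda$ tower-property argument for its $W^{(s)}_{i,j}-W^{(s-1)}_{i,j}$), Burkholder's inequality on a time-reversed martingale, and the tail condition on $\sum_{l\geq k}\delta_m(l)$ to extract $T^\beta(1+s)^{-\alpha}$. The only difference is bookkeeping: the paper applies Burkholder separately to each fixed depth $s$ (its martingales $M^{(s)}_{k,j}$, which are exactly the diagonals of your array, with the $s=0$ term handled by an independent-sum moment inequality) and then sums the resulting $L^m$ bounds over $s$, whereas you group all depths attached to one innovation into $U_t$, apply Burkholder once to the backward difference sequence, and close with Young's convolution inequality --- an interchange of summation order that yields the same bound, so your closing claim that only this reindexing produces the $(1+s)^{-\alpha}$ decay overstates the distinction.
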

For the  special case of (\ref{lemma.linear_comb})  where the  random variables $X_{i,j}$ are $(m,\alpha)-$ short range dependent,  we have,
\begin{equation}
\begin{aligned}
\max_{j = 1,...,d}\Vert\sum_{i = 1}^T a_iX_{i,j}\Vert_m = O\left(\sqrt{\sum_{i = 1}^T a^2_i}\right)\\
\text{and } \sup_{j = 1,...,d, s \geq 0}(1 + s)^\alpha\Vert\sum_{i = 1}^T a_i(X_{i,j} - \mathbf{E}X_{i,j}|\mathcal{F}_{i,s})\Vert_m = O\left(\sqrt{\sum_{i = 1}^T a^2_i}\right)
\end{aligned}
\end{equation}

If $X_i, i\in\mathbf{Z}$ are $(m,\alpha)-$ short range dependent random variables, then the following lemma  shows that the centered product $(X_iX_{i - j} - \mathbf{E}X_iX_{i - j})$, for some given $j$,  will also  be a sequence of medium range dependent random variables. This property allows for using
results obtained in this section
to discuss asymptotic properties  of  second-order statistics, like  sample autocovariances and sample autocorrelations.

\begin{lemma}
\label{lemma.recognize}
Suppose $X_i, i\in\mathbf{Z}$ are $(m,\alpha)$-short range dependent,  random variables
with $m\geq 8$ and $\alpha > 2$ and let $X_1,...,X_T$ be the observations available.

(i) \ \  Suppose $d = O(T^\mathcal{B})$ is an non-negative integer where $0\leq \mathcal{B} < 1$. Let $\{a_{ij}\}_{i = 1,...,p_1,}$\\
\noindent$\ _{j = 0,...,d}\in\mathbf{R}^{p_1\times (d + 1)}$ satisfy $\max_{i = 1,...,p_1}\sum_{j = 0}^d a_{ij}^2 = O(1)$, where $p_1$ is an integer (it can be arbitrarily large). Define $Z_{i,k} = \sum_{j = 0}^d a_{kj}(X_iX_{i - j} - \mathbf{E}X_iX_{i - j})$. Then $Z_{i,k}, i \in\mathbf{Z}, k = 1,...,p_1$,  are $(m/2, \alpha - 1, \alpha\mathcal{B})$-medium range dependent random variables.

(ii) \ \  Suppose $X_i, i\in\mathbf{Z}$ are white noise random variables, i.e., $\mathbf{E}X^2_i = \sigma^2$,  $0<\sigma^2 < \infty$,  and $\mathbf{E}X_iX_j = 0$ for $i\neq j$. Furthermore,  suppose that $p = O(1)$ is an non-negative integer and let $a_1,...,a_p\in\mathbf{R}$ such that  $P(x) = 1 - \sum_{j = 1}^p a_j x^j\neq 0\ \text{for all } x\in\mathbf{C}, \vert x\vert\leq 1$. Define the autoregressive random variables $Y_i, i\in\mathbf{Z}$,  as
\begin{equation}
Y_i = X_i + \sum_{j = 1}^p a_j Y_{i - j}.
\end{equation}
Then the random variables $Y_i, i\in\mathbf{Z}$ are $(m,\alpha)$-short range dependent.
\end{lemma}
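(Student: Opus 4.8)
The plan is to verify, for each part, the three defining requirements of Definition~\ref{def.medium_range} for the target sequence: zero mean, a uniform moment bound, and the weighted tail bound on the physical dependence measure. Throughout I write $l$ for the coupling lag and, following \eqref{eq.defX_ik}, let $X_i(l)$ be the version of $X_i$ with $e_{i-l}$ replaced by $e^\dagger_{i-l}$; the coupled versions of $Z_{i,k}$ and $Y_i$ are obtained by making the same single replacement inside their defining expressions. The one subtlety to keep in view is that replacing $e_{i-l}$ acts on $X_{i-j}$ as a coupling at lag $l-j$, i.e.\ it turns $X_{i-j}$ into $X_{i-j}(l-j)$ (which equals $X_{i-j}$ when $l<j$, by the convention in \eqref{eq.defX_ik}).

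For part (i), the zero-mean condition is immediate from the centering in $Z_{i,k}$, and $Z_{i,k}$ is $\mathcal{F}_i$-measurable. The crucial step for the uniform moment bound is the factorization $\sum_{j=0}^d a_{kj}X_iX_{i-j}=X_i\cdot\big(\sum_{j=0}^d a_{kj}X_{i-j}\big)$, so that by Cauchy--Schwarz $\|Z_{i,k}\|_{m/2}\leq 2\|X_i\|_m\,\|\sum_{j=0}^d a_{kj}X_{i-j}\|_m$; the second factor is a linear combination of the window $X_{i-d},\dots,X_i$, so the short-range special case of Lemma~\ref{lemma.linear_comb}(i) gives it size $O(\sqrt{\sum_j a_{kj}^2})=O(1)$ uniformly in $i,k$, whence $\sup_{i,k}\|Z_{i,k}\|_{m/2}=O(1)$. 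The naive triangle inequality over $j$ only yields $O(\sqrt{d+1})$, so this factorization is essential. For the dependence measure, a telescoping split $X_iX_{i-j}-X_i(l)X_{i-j}(l-j)=(X_i-X_i(l))X_{i-j}+X_i(l)(X_{i-j}-X_{i-j}(l-j))$ together with $\|AB\|_{m/2}\leq\|A\|_m\|B\|_m$ gives $\|X_iX_{i-j}-X_i(l)X_{i-j}(l-j)\|_{m/2}\leq C(\delta_m(l)+\delta_m(l-j))$; summing against $a_{kj}$ and applying Cauchy--Schwarz over $j$ bounds the dependence measure $\delta^{Z}_{m/2}(l):=\sup_{i,k}\|Z_{i,k}-Z_{i,k}(l)\|_{m/2}$ by $C\big(\sqrt{d+1}\,\delta_m(l)+\sqrt{\sum_{j=0}^d\delta_m(l-j)^2}\big)$. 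I would then bound $\sum_{l=q}^\infty$ of this in the two regimes $q>2d$ and $q\leq 2d$: when $q>2d$ every shifted tail $\sum_{l\geq q}\delta_m(l-j)$ inherits the $(1+q)^{-\alpha}$ decay and the $d+1=O(T^{\mathcal B})$ summands supply the factor $T^{\mathcal B}$, while for $q\leq 2d$ one uses $(1+q)^{\alpha-1}\leq C(1+2d)^{\alpha-1}=O(T^{\mathcal B(\alpha-1)})$ against a tail sum of order $T^{\mathcal B}$. Both regimes yield $(1+q)^{\alpha-1}\sum_{l=q}^\infty\delta^{Z}_{m/2}(l)=O(T^{\alpha\mathcal B})$, exactly matching $(m/2,\alpha-1,\alpha\mathcal B)$; the hypotheses $m\geq 8$ and $\alpha>2$ are precisely what ensure $m/2\geq 4$ and $\alpha-1>1$.

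For part (ii), I would first use the stability condition $P(x)\neq 0$ for $|x|\leq 1$ to obtain the causal moving-average representation $Y_i=\sum_{s=0}^\infty\psi_s X_{i-s}$ with geometrically decaying coefficients $|\psi_s|\leq Cr^s$, $r\in(0,1)$. Zero mean and $\sup_i\|Y_i\|_m=O(1)$ then follow from $\sum_s|\psi_s|<\infty$ together with $\sup_i\|X_i\|_m=O(1)$. Replacing $e_{i-l}$ turns $X_{i-s}$ into $X_{i-s}(l-s)$, so $\|Y_i-Y_i(l)\|_m\leq\sum_{s=0}^l|\psi_s|\delta_m(l-s)$, a convolution of the geometric weights with the short-range dependence measure. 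Summing over $l\geq q$ and splitting the double sum at $s\approx q/2$, the part $s\leq q/2$ retains the $(1+q)^{-\alpha}$ tail while the part $s>q/2$ is controlled by $r^{q/2}$, which is dominated by any polynomial in $(1+q)^{-1}$; hence $(1+q)^\alpha\sum_{l=q}^\infty\delta^{Y}_m(l)=O(1)$ and the same pair $(m,\alpha)$ is preserved.

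I expect the main obstacle to be part (i): securing the uniform $L^{m/2}$ bound without a spurious $\sqrt{d}$ factor, which the factorization through Lemma~\ref{lemma.linear_comb} resolves, and then bookkeeping the two regimes so that the resulting exponents land exactly on $\alpha-1$ and $\alpha\mathcal B$ rather than something larger. Part (ii) should be comparatively routine once the geometric decay of the $\psi_s$ is established, since geometric weights beat the polynomial short-range tails at every step.
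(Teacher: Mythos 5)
Your proposal is correct and follows essentially the same route as the paper's own proof: for (i), the centered factorization $Z_{i,k}=X_i\big(\sum_{j=0}^d a_{kj}X_{i-j}\big)-\mathbf{E}[\cdot]$ combined with Lemma~\ref{lemma.linear_comb} for the uniform $L^{m/2}$ bound, the telescoped coupling estimate $\Vert Z_{i,k}-Z_{i,k}(l)\Vert_{m/2}\leq C\big(\delta_m(l)+\sum_{j}\vert a_{kj}\vert\,\delta_m(l-j)\big)$, and a two-regime tail computation split according to whether the lag exceeds (a multiple of) $d$, landing exactly on the parameters $(\alpha-1,\alpha\mathcal{B})$; for (ii), the causal MA$(\infty)$ representation with geometrically decaying coefficients and the convolution split at $s_0/2$, just as in the paper. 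The only deviations are cosmetic and harmless: your termwise Cauchy--Schwarz introduces a spare $\sqrt{d+1}$ factor in front of $\delta_m(l)$ where the paper factors out the whole linear combination (this washes out since $T^{\mathcal{B}/2}=O(T^{\alpha\mathcal{B}})$), and you split the regimes at $2d$ where the paper splits at $d$.
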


If $\mathcal{B} = 0$ in (i), that is if $d$ is constant, then the random variables $Z_{i,k}$ in Lemma \ref{lemma.recognize} will be $(m/2, \alpha - 1)$-short range dependent.
However, if $\mathcal{B} > 0$, i.e., if the dimension  $d$ grows as the sample size $T$ increases,  then the products $Z_{i, k}$ will no longer consist  of  short range
dependent random variables. In this case we can adopt the medium range dependent concept.
As it will be demonstrated in Section~\ref{section.autocovariance} below, Lemma~\ref{lemma.recognize} is useful for allowing  the maximum lag $d$ considered to grow as the sample size $T$ increases,  when we deal with   the asymptotic distribution of  sample autocovariances and sample
autocorrelations.

The third result of this section presents  a Gaussian approximation theorem for the sample autocovariances of $(m,\alpha)$-short range dependent random variables. Notice that  various central limit theorems have been established regarding  the asymptotic distribution of  sample  autocovariances; see, for instance,  \cite{MR1700749} and \cite{MR2002723}. However, the stationarity conditions of these central limit theorems do not cover non-stationary time series. Recently, \cite{MR3161448} proposed a Gaussian approximation  theorem which is  suitable for more complex situations; also see \cite{MR3718156} and \cite{ZHANG}. Theorem \ref{lemma.Gaussian} bellow allows for the random variables considered to be non-stationary (i.e., they may have different marginal distributions) and it is  more suitable for our setting.

\begin{theorem}
Suppose $X_i, i\in\mathbf{Z}$,  are $(m,\alpha)$-short range dependent random variables with $m\geq 8$ and $\alpha > 2$, $X_1,...,X_T$ is the observed time series
and $d = O(T^\mathcal{B})$ is a positive integer with $\mathcal{B}\geq 0$. Let  $a_{ij}, i = 1,...,p_1, j = 0,1,...,d$ be real numbers satisfying
\begin{equation}
 \max_{i = 1,...,p_1}\sum_{j = 0}^d  a_{ij}^2 = O(1)\ \text{and } p_1 = O(T^{\alpha_{p_1}}),
\end{equation}
where $\alpha_{p_1}\geq 0$ is a constant. Define $Z_{i, k} = \sum_{j = 0}^d a_{kj}(X_iX_{i - j} - \mathbf{E}X_iX_{i  -j})$ for
$i\in\mathbf{Z}$ and $k =  1,...,p_1$. Assume  that
\begin{equation}
\Vert\frac{1}{\sqrt{T}}\sum_{i = 1}^T Z_{i,k}\Vert_2  > c\ \text{for } \ \  k = 1,...,p_1,
\end{equation}
where $c > 0$ is a constant. If there exist  two constants $\alpha_s, \alpha_l$ such that $0 < \alpha_s < \alpha_l < 1$ and
\begin{equation}
\begin{aligned}
\frac{2\alpha_{p_1}}{m} + 2\alpha\mathcal{B} < (\alpha - 1)\alpha_s,\ \frac{4\alpha_{p_1}}{m} + 4\alpha\mathcal{B} + \alpha_s < \alpha_l,\
\frac{12\alpha_{p_1}}{m} + 12\alpha\mathcal{B} + \alpha_l < 1\\
\text{and } 4\alpha\mathcal{B} < (\alpha - 1)\alpha_s,\ 8\alpha\mathcal{B} + \alpha_s < \alpha_l,
\end{aligned}
\label{eq.cond_alpha}
\end{equation}
then,
\begin{equation}
\sup_{x\in\mathbf{R}}\vert Prob\left(\max_{k = 1,...,p_1}\vert\frac{1}{\sqrt{T}}\sum_{i = 1}^T Z_{i, k}\vert\leq x\right)
 -  Prob\left(\max_{k = 1,...,p_1}\vert\xi_k\vert\leq x\right)\vert = o(1),
\label{eq.delta_prob}
\end{equation}
where $\xi_k, k = 1,...,p_1$,  are joint normal distributed random variables with $\mathbf{E}\xi_k = 0$ and
\noindent
$\mathbf{E}\xi_{k_1}\xi_{k_2} = T^{-1}\sum_{i_1 = 1}^T\sum_{i_2 = 1}^T \mathbf{E}Z_{i_1, k_1}Z_{i_2, k_2}$.
\label{lemma.Gaussian}
\end{theorem}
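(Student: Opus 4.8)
The plan is to reduce the statement to a Gaussian approximation for the maximum of sums of \emph{independent} random vectors by means of a big-block/small-block decomposition, following the strategy of \cite{MR3161448}. First I would apply Lemma~\ref{lemma.recognize}(i): since $m\geq 8$ and $\alpha>2$, the variables $Z_{i,k}=\sum_{j=0}^d a_{kj}(X_iX_{i-j}-\mathbf{E}X_iX_{i-j})$ are $(m/2,\alpha-1,\alpha\mathcal{B})$-medium range dependent, so Lemma~\ref{lemma.linear_comb} applies to them with effective moment order $m/2\geq 4$ and effective decay exponent $\alpha-1>1$, while \eqref{eq.cov} gives $\vert\mathbf{E}Z_{i_1,k}Z_{i_2,k}\vert\leq C T^{2\alpha\mathcal{B}}(1+\vert i_2-i_1\vert)^{-(\alpha-1)}$. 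This turns the problem into a high-dimensional central limit statement for the $p_1=O(T^{\alpha_{p_1}})$ normalized partial sums $S_k=T^{-1/2}\sum_{i=1}^T Z_{i,k}$.

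Next I would split $\{1,\dots,T\}$ into alternating big blocks of length of order $T^{\alpha_l}$ and small blocks of length of order $T^{\alpha_s}$, writing $S_k=B_k+R_k$ with $B_k$ the big-block contribution and $R_k$ the small-block contribution; the exponents $\alpha_s,\alpha_l$ in \eqref{eq.cond_alpha} are exactly these block-length parameters. Bounding $\Vert R_k\Vert_{m/2}$ through Lemma~\ref{lemma.linear_comb}(i) by the total small-block length times $T^{\alpha\mathcal{B}}$ and taking a union bound over coordinates shows $\max_k\vert R_k\vert=o_p(1)$ under $\tfrac{4\alpha_{p_1}}{m}+4\alpha\mathcal{B}+\alpha_s<\alpha_l$. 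Two big blocks are separated by a gap of order $T^{\alpha_s}$, so Lemma~\ref{lemma.linear_comb}(ii) and \eqref{eq.cov} make their mutual dependence decay like $T^{-(\alpha-1)\alpha_s}$; after a union bound over the $p_1$ coordinates, which costs a power $p_1^{2/m}=T^{2\alpha_{p_1}/m}$, the condition $\tfrac{2\alpha_{p_1}}{m}+2\alpha\mathcal{B}<(\alpha-1)\alpha_s$ permits a coupling that replaces the $B_k$ by independent copies with asymptotically negligible error.

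The main analytic step is to apply a Gaussian approximation theorem for the maximum of independent, mean-zero summands, as in \cite{MR3161448}. Its Berry--Esseen rate is governed by a power of $\log p_1$, by the per-block moment factor $T^{\alpha\mathcal{B}}$, and by the relative big-block length $T^{\alpha_l}/T$; under the finite moment order $m/2$ the dimension enters this rate through a power of $p_1$, and the condition $\tfrac{12\alpha_{p_1}}{m}+12\alpha\mathcal{B}+\alpha_l<1$ is precisely what forces the rate to vanish. This produces a Gaussian vector whose covariance equals the same-block covariance of the $Z_{i,k}$.

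It remains to match this covariance with the target $\mathbf{E}\xi_{k_1}\xi_{k_2}=T^{-1}\sum_{i_1,i_2}\mathbf{E}Z_{i_1,k_1}Z_{i_2,k_2}$. The two matrices differ only by the discarded cross-block and small-block cross-covariances, whose maximal entry I would control by summing \eqref{eq.cov} over the appropriate lag ranges; the remaining inequalities $4\alpha\mathcal{B}<(\alpha-1)\alpha_s$ and $8\alpha\mathcal{B}+\alpha_s<\alpha_l$ make this entrywise difference tend to zero. A Gaussian comparison lemma then transfers the covariance bound to the distribution function of the maximum, while Nazarov's anti-concentration inequality, applicable because $\Vert S_k\Vert_2>c$ keeps the variances of the $\xi_k$ bounded away from zero, converts each successive approximation into a Kolmogorov-distance bound; chaining small-block removal, decoupling, and Gaussian approximation yields \eqref{eq.delta_prob}. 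I expect the principal obstacle to be the simultaneous calibration of these exponents: decoupling pushes $\alpha_s$ up, the independent-block central limit theorem pushes $\alpha_l$ down, and every moment-based maximal bound inflates by a power $T^{c\alpha_{p_1}/m}$ in the growing dimension, so the delicate part is verifying that the particular system \eqref{eq.cond_alpha} admits a nonempty range of $(\alpha_s,\alpha_l)$ while driving all error terms to zero.
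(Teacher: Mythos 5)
Your plan is correct and follows essentially the same route as the paper's proof: Lemma~\ref{lemma.recognize} to get $(m/2,\alpha-1,\alpha\mathcal{B})$-medium range dependence, conditional-expectation truncation to an $s$-dependent sequence (which is how your ``coupling'' to independent big blocks is actually implemented, via Lemma~\ref{lemma.linear_comb}(ii)), big blocks of length $T^{\alpha_l}$ and small blocks of length $T^{\alpha_s}$, a Lindeberg-type swap with the smooth-max functions of \cite{MR3161448}, covariance matching, and the Gaussian anti-concentration bound of Lemma~\ref{lemma.normal_property}. You also attribute each inequality in \eqref{eq.cond_alpha} to the correct error term (truncation, small-block removal, Lindeberg swap, and the two covariance-matching terms), matching the paper's final exponent bookkeeping.
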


\begin{remark}
    As demonstrated in the supplementary material, $\alpha_s$ and $\alpha_l$   control the sizes of the `small blocks' and  `big blocks', respectively, used  in the method of proof of Theorem \ref{lemma.Gaussian}.
    The reason why   eq.\eqref{eq.cond_alpha} looks quite  involved   is that  the number $d$, which will later coincide   with the total number of autocovariances, respectively, autocorrelations  considered, is allowed  to increase  to infinity as the sample size $T$  increases to infinite.  Recall that $Z_{i, k} = \sum_{j = 0}^d a_{kj}(X_iX_{i - j} - \mathbf{E}X_iX_{i  -j})$. Therefore, even if $X_i$ is short-range dependent, the  dependence between $Z_{i_1,k}$ and $Z_{i_2, k}$ can be strong  for $\vert i_1 - i_2\vert$  large, and this  affects the asymptotic distribution of the sum $ T^{-1/2}\sum_{i = 1}^T Z_{i, k}$. Regarding the parameter $ p_1$,
    this parameter  represents the number of simultaneous linear combinations taken into consideration, so $p_1$, correspondingly $\alpha_{p_1}$, does not have to depend on $d$. However, in Section \ref{section.autocovariance} and Section \ref{section.AR_coef}, we estimate the autocovariances, autocorrelations, and AR coefficients with maximum lag $d$, respectively, $p$. Therefore, the number of simultaneous linear combinations considered in these sections are smaller  or equal to $d$. Notice  that  condition \eqref{eq.cond_alpha} essentially imposes some restriction on how fast $ d$  and $p_1$ are  allowed to  increase to infinity with $T$ and that   \eqref{eq.cond_alpha} is feasible. For  instance, for  $m = 20, \alpha = 2.5$, $\alpha_{p_1} = 1/25$ and $\mathcal{B} = 1/80$, $\alpha_s $ and $ \alpha_l $   can  be  chosen as  $\alpha_s = 1 / 6$ and $\alpha_l = 1/2$ so that  \eqref{eq.cond_alpha}  is fulfilled.

    However, if   $\mathcal{B} = 0$, which implies $d = O(1)$, that is, if only a fixed number of autocovariances, respectively autocorrelations is considered, then  eq.\eqref{eq.cond_alpha} simplifies considerably. It becomes
    \begin{align*}
    \frac{2\alpha_{p_1}}{m} < (\alpha - 1)\alpha_s,\ \frac{4\alpha_{p_1}}{m} + \alpha_s < \alpha_l,\
\frac{12\alpha_{p_1}}{m}  + \alpha_l < 1
    \end{align*}
    which  implies that the conditions of Theorem~\ref{lemma.Gaussian} are fulfilled if
    \[ \frac{2\alpha_{p_1}}{m}\frac{1}{(\alpha-1)} < \alpha_s < \alpha_l - \frac{4\alpha_{p_1}}{m}<  \alpha_l < 1 - \frac{12\alpha_{p_1}}{m}. \]
    Finally, if also $ \alpha_{p_1}=0$, that is if the number of simultaneous linear combinations considered is fixed, then \eqref{eq.cond_alpha} degenerates to the simple requirement $ 0<\alpha_s < \alpha_l <1$.
\end{remark}


Consider next the problem of estimating the covariances of the random variables $Z_{i,k}$, defined in Theorem \ref{lemma.Gaussian}. In order to construct an estimator,
we introduce a kernel function $K$  which has  the following properties.

\begin{definition}[Kernel function]
$K(\cdot): \mathbf{R}\to [0,\infty)$ is a symmetric, continuously di-fferentiable  function, with $K(0) = 1$, $\int_{\mathbf{R}}K(x)dx < \infty$ and $K(x)$  decreasing on $[0,\infty)$. Define the Fourier transformation of $K$ as $\mathcal{F}K(x) = \int_{\mathbf{R}}K(t)\exp(-2\pi \mathrm{i} tx)dt$, where $\mathrm{i}= \sqrt{-1}$. Assume $\mathcal{F}K(x)\geq 0$ for all $  x\in\mathbf{R}$ and $\int_{\mathbf{R}}\mathcal{F}K(x)dx<\infty$.
\label{def.kernel}
\end{definition}
 \noindent

\begin{remark}
Definition \ref{def.kernel} is a  bit 
stronger than the usual definition of a kernel function (see e.g., \cite{MR1865334}) but ensures  some desirable properties.
According to \cite{MR2656050} and the Fourier inversion theorem (e.g., Theorem 8.26 in \cite{MR1681462}), $\forall x = (x_1,...,x_n)^T\in\mathbf{R}^n$ and any positive number $k$,
\begin{equation}
\begin{aligned}
\sum_{s = 1}^n\sum_{j = 1}^n x_sx_jK\left(\frac{s - j}{k}\right) = \int_{\mathbf{R}}\sum_{s = 1}^n\sum_{j = 1}^nx_sx_j\mathcal{F}K(z)\exp\left(2\pi\mathrm{i}z\frac{s - j}{k}\right)dz\\
=\int_{\mathbf{R}}\mathcal{F}K(z)\Big|\sum_{s = 1}^n x_s\exp\left(\frac{2\pi\mathrm{i}zs}{k}\right)\Big|^2 dz\geq 0,
\end{aligned}
\end{equation}
that is,  the matrix $\left\{ K\left(\frac{s - j}{k}\right)\right\}_{s,j  = 1,2,...,n}$ is positive semi-definite. One possible kernel function is $K(x)  = \exp(-x^2/2)$, whose Fourier transform is $\mathcal{F}K(x) = \sqrt{2\pi}\exp(-2\pi^2x^2)$.
\end{remark}

\begin{lemma}
Let $X_i,i\in\mathbf{Z}$, be  random variables and $a_{ij}, i = 1,...,p_1, j = 0,1,...,d$ be real numbers. The integers $d, p_1$ and the random variables $Z_{i,k}, i\in\mathbf{Z}, k = 1,...,p_1$
satisfy the conditions of Theorem \ref{lemma.Gaussian}. In addition, suppose $K\left(\cdot\right): \mathbf{R}\to [0,\infty)$ is a
kernel function satisfying  Definition \ref{def.kernel}, $k_T > 0$ is a bandwidth and $k_T\to\infty $ as $T\to\infty$. Then,
\begin{equation}
\begin{aligned}
\max_{j_1,j_2 = 1,...,p_1}\Big|\frac{1}{T}\sum_{i_1 = 1}^T\sum_{i_2 = 1}^T Z_{i_1, j_1}Z_{i_2, j_2}K\left(\frac{i_1 - i_2}{k_T}\right)
 - \frac{1}{T}\sum_{i_1 = 1}^T\sum_{i_2 = 1}^T\mathbf{E}Z_{i_1, j_1}Z_{i_2, j_2}\Big|\\
= O_p(v_T\times T^{\alpha\mathcal{B}} + k_T\times T^{\frac{8\alpha_{p_1}}{m} + \alpha\mathcal{B} - \frac{1}{2}})
\end{aligned}
\label{eq.cov_converge}
\end{equation}
where  $$
v_T = \begin{cases}
k_T^{2 -\alpha}\ \text{if } 2 < \alpha < 3\\
 \log(k_T)/ k_T \ \text{if }  \alpha = 3\\
1/k_T\ \text{if }  \alpha > 3.
\end{cases}
$$
\label{lemma.covariance}
\end{lemma}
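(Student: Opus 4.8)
The plan is to insert the expectation of the kernel-weighted double sum and treat the resulting bias and fluctuation parts separately, matching them to the two summands $v_T\,T^{\alpha\mathcal{B}}$ and $k_T\,T^{8\alpha_{p_1}/m+\alpha\mathcal{B}-1/2}$. Writing $\widehat{C}_{j_1,j_2}=T^{-1}\sum_{i_1,i_2}Z_{i_1,j_1}Z_{i_2,j_2}K((i_1-i_2)/k_T)$, I would split the quantity inside the maximum as $(\widehat{C}_{j_1,j_2}-\mathbf{E}\widehat{C}_{j_1,j_2})+(\mathbf{E}\widehat{C}_{j_1,j_2}-T^{-1}\sum_{i_1,i_2}\mathbf{E}Z_{i_1,j_1}Z_{i_2,j_2})$ and bound the two maxima over $j_1,j_2=1,\dots,p_1$ in turn.

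For the bias, the central input is that by Lemma~\ref{lemma.recognize}(i) the $Z_{i,k}$ are $(m/2,\alpha-1,\alpha\mathcal{B})$-medium range dependent, so the covariance estimate \eqref{eq.cov} applied with exponents $\alpha-1$ and $\alpha\mathcal{B}$ gives $\vert\mathbf{E}Z_{i_1,j_1}Z_{i_2,j_2}\vert\leq C\,T^{\alpha\mathcal{B}}(1+\vert i_1-i_2\vert)^{-(\alpha-1)}$ uniformly in $j_1,j_2$. Since $K(0)=1$, the bias equals $T^{-1}\sum_{i_1,i_2}\mathbf{E}Z_{i_1,j_1}Z_{i_2,j_2}\,(K((i_1-i_2)/k_T)-1)$, whose absolute value is at most $C\,T^{\alpha\mathcal{B}}\sum_h (1+\vert h\vert)^{-(\alpha-1)}\vert K(h/k_T)-1\vert$ after summing out one time index. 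I would split this lag sum at $\vert h\vert=k_T$, using the Lipschitz bound $\vert K(h/k_T)-1\vert\leq C\vert h\vert/k_T$ (from continuous differentiability and $K(0)=1$) on the inner range and the uniform bound $0\leq K\leq 1$ on the outer range; since $\alpha-1>1$ the outer sum is $O(k_T^{2-\alpha})$, while the inner sum reduces to $k_T^{-1}\sum_{\vert h\vert\leq k_T}(1+\vert h\vert)^{-(\alpha-2)}$, whose three regimes $2<\alpha<3$, $\alpha=3$, $\alpha>3$ reproduce exactly the definition of $v_T$. This yields the term $O(v_T\,T^{\alpha\mathcal{B}})$.

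For the fluctuation I would reorganize the double sum by lag $h=i_1-i_2$ and write it as $T^{-1}\sum_h K(h/k_T)\,U_h^{(j_1,j_2)}$, where $U_h^{(j_1,j_2)}=\sum_i(Z_{i+h,j_1}Z_{i,j_2}-\mathbf{E}Z_{i+h,j_1}Z_{i,j_2})$ is a centered sum over the admissible index range. The key step is a uniform $L^{m/4}$ bound on $U_h^{(j_1,j_2)}$. Writing $\delta^Z_{m/2}(\cdot)$ for the order-$m/2$ physical dependence measure of $\{Z_{i,k}\}$, Hölder gives $\Vert Z_{i+h,j_1}Z_{i,j_2}\Vert_{m/4}\leq\Vert Z_{i+h,j_1}\Vert_{m/2}\Vert Z_{i,j_2}\Vert_{m/2}=O(1)$, and the product-coupling rule bounds the order-$m/4$ dependence measure of $\{Z_{i+h,j_1}Z_{i,j_2}\}_i$ at lag $l$ by $\Vert Z_{i,j_2}\Vert_{m/2}\,\delta^Z_{m/2}(l+h)+\Vert Z_{i+h,j_1}\Vert_{m/2}\,\delta^Z_{m/2}(l)$; because both norms are $O(1)$, summing over $l$ keeps the total coupling size at $O(T^{\alpha\mathcal{B}})$ (not $T^{2\alpha\mathcal{B}}$), uniformly in $h$ and $(j_1,j_2)$. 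The moment inequality underlying Lemma~\ref{lemma.linear_comb} (valid here since $m/4\geq 2$) applied with unit coefficients then delivers $\Vert U_h^{(j_1,j_2)}\Vert_{m/4}=O(T^{\alpha\mathcal{B}}\sqrt{T})$ uniformly.

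Finally I would assemble the fluctuation bound. Since $K\geq 0$ is integrable and decreasing on $[0,\infty)$, an integral comparison gives $\sum_h K(h/k_T)=O(k_T)$, so by the triangle inequality $\Vert T^{-1}\sum_h K(h/k_T)U_h^{(j_1,j_2)}\Vert_{m/4}=O(k_T\,T^{\alpha\mathcal{B}-1/2})$ for each pair. The elementary maximal inequality $\Vert\max_n\vert V_n\vert\Vert_r\leq N^{1/r}\max_n\Vert V_n\Vert_r$ with $N=p_1^2=O(T^{2\alpha_{p_1}})$ and $r=m/4$ contributes the factor $(p_1^2)^{4/m}=T^{8\alpha_{p_1}/m}$, and Markov's inequality then yields $O_p(k_T\,T^{8\alpha_{p_1}/m+\alpha\mathcal{B}-1/2})$, the second summand of the bound; combining with the bias gives \eqref{eq.cov_converge}. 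I expect the main obstacle to lie in the fluctuation part, specifically in establishing the per-lag moment bound \emph{uniformly} in the lag $h$ and the pair $(j_1,j_2)$ with the correct, non-doubled $T^{\alpha\mathcal{B}}$ exponent and at the precise moment order $m/4$ forced by the union bound, so that the moment inequality can be invoked with constants independent of these growing indices.
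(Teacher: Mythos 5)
Your proposal is correct and takes essentially the same route as the paper's proof: the identical bias/fluctuation decomposition, the bias bounded via the covariance decay $\vert\mathbf{E}Z_{i_1,j_1}Z_{i_2,j_2}\vert\leq CT^{\alpha\mathcal{B}}(1+\vert i_1-i_2\vert)^{-(\alpha-1)}$ from Lemma \ref{lemma.recognize} and \eqref{eq.cov} with the lag sum split at order $k_T$ (Lipschitz bound on $K$ near the origin, tail bound beyond), and the fluctuation handled by reorganizing along lags, applying the martingale-type moment inequality behind Lemma \ref{lemma.linear_comb} at order $m/4$ with the product-coupling estimate that keeps the cumulative dependence at $O(T^{\alpha\mathcal{B}})$ uniformly in the lag, then summing $\sum_h K(h/k_T)=O(k_T)$ and paying the factor $p_1^{8/m}$ for the maximum before Markov's inequality. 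The only differences are cosmetic (split point $k_T$ versus $\lfloor k_T/2\rfloor$, and indexing of the coupled innovations).
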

According to equation \eqref{eq.cond_alpha}, $16\alpha_{p_1}/m + 16\alpha\mathcal{B} < 1$. Therefore, if $\alpha > 9/4$, then we may choose $k_T = T^{4\alpha\mathcal{B}}$
and  we have
$$\max_{j_1,j_2 = 1,...,p_1}\Big|\frac{1}{T}\sum_{i_1 = 1}^T\sum_{i_2 = 1}^T Z_{i_1, j_1}Z_{i_2, j_2}K\left(\frac{i_1 - i_2}{k_T}\right) - \frac{1}{T}\sum_{i_1 = 1}^T\sum_{i_2 = 1}^T\mathbf{E}Z_{i_1, j_1}Z_{i_2, j_2}\Big|\\
 = o_p(1).$$

\section{Asymptotic distribution of the maximum of  sample autocovariances and autocorrelations}
\label{section.autocovariance}
This section goes back to the original problem, namely that  of estimating the autocovariances of a weakly, but not necessarily strictly stationary
time series. Suppose that $\{X_i, i\in\mathbf{Z}\}$ form a stochastic process with
$\mathbf{E}X_i = 0$ and
$X_1,X_2,...,X_T$ is the time series observed, (similar to $X_i$ forming a triangular array, the underlying process  may be different for  different sample sizes $T$).
Define $\sigma_j = \mathbf{E}X_iX_{i - j} = \mathbf{E}X_0X_{-j}$. Our purpose is to generate a (simultaneous) confidence interval or to perform  hypothesis tests for the autocovariances $\sigma_j$
as well as for the  autocorrelations  $\rho_j = \sigma_j/\sigma_0$. Toward this goal, we use the sample autocovariances and the sample autocorrelations
as estimators of $ \sigma_j$ and $\rho_j$, where
\begin{equation}
\widehat{\sigma}_j   = \frac{1}{T}\sum_{i = j+1}^T X_iX_{i  - j}\ \text{and  }  \widehat{\rho}_j = \widehat{\sigma}_j/\widehat{\sigma}_0.
\label{eq.def_sigma}
\end{equation}
Theorem \ref{theorem.auto_covariance} derives a Gaussian approximation result  for  these  estimators.
\begin{theorem} \label{th.th2}
Suppose $\{X_i, i\in\mathbf{Z}\}$ are $(m,\alpha_X)-$short range dependent random variables with $m\geq 8,\ \alpha_X > 2$.
Suppose further that $d = O(T^{\beta_X})$ is a positive integer and the sets $\mathcal{H}\subset\{0,1,...,d\},\ \mathcal{I}\subset\{1,2,...,d\}$ are not empty. Assume that
there exists constants $0 < \alpha_s < \alpha_l < 1$ such that
\begin{equation}
\begin{aligned}
\frac{2\beta_X}{m} + 2\alpha_X\beta_X < (\alpha_X - 1)\alpha_s,\ \frac{4\beta_{X}}{m} + 4\alpha_X\beta_X + \alpha_s < \alpha_l,\
\frac{12\beta_{X}}{m} + 12\alpha_X\beta_X + \alpha_l < 1\\
\text{and } 4\alpha_X\beta_X < (\alpha_X - 1)\alpha_s,\ 8\alpha_X\beta_X + \alpha_s < \alpha_l
\end{aligned}
\label{eq.condH}
\end{equation}

\noindent (i) If  a constant $c>0$ exists such that
\begin{equation}
\Vert\frac{1}{\sqrt{T}}\sum_{i = 1}^T (X_iX_{i - j} - \sigma_{j})\Vert_2 > c\ \text{for $j\in\mathcal{H}$,}
\label{eq.minVARAUTO}
\end{equation}
then,
\begin{equation}
\sup_{x\in\mathbf{R}}\Big| Prob\left(\max_{j\in\mathcal{H}}\sqrt{T}\vert\widehat{\sigma}_j - \sigma_j\vert\leq x\right)
 -  Prob\left(\max_{j\in\mathcal{H}}\vert \xi_j\vert\leq x\right)\Big| = o(1),
 \label{eq.auto_covariance_gaussian}
\end{equation}
where $\xi_j,j\in\mathcal{H}$ are joint Gaussian  random variables with  $\mathbf{E}\xi_j = 0$
and

\noindent $\mathbf{E}\xi_{j_1}\xi_{j_2} = T^{-1}\sum_{i_1 = 1}^T\sum_{i_2 = 1}^T\mathbf{E} (X_{i_1}X_{i_1 - j} - \sigma_{j}) (X_{i_2}X_{i_2 - j} - \sigma_{j})$.

\noindent (ii) Define
\[Z_{i,j} = - \frac{\sigma_j}{\sigma^2_0}(X_i^2 - \sigma_0) + \frac{1}{\sigma_0}(X_iX_{i - j} - \sigma_j)\]
and suppose there exists a constant $c > 0$ such that
\begin{equation}
\Vert\frac{1}{\sqrt{T}}\sum_{i = 1}^T Z_{i,j}\Vert_2 > c\ \text{for $j\in\mathcal{I}$}\ \text{and $\sigma_0 > c$.}
\label{eq.minVARCOEF}
\end{equation}
Then
\begin{equation}
\sup_{x\in\mathbf{R}}\Big| Prob\left(\max_{j\in\mathcal{I}}\sqrt{T}\vert\widehat{\rho}_j - \rho_j\vert\leq x\right)
 -  Prob\left(\max_{j\in\mathcal{I}}\vert\zeta_j\vert\leq x\right)\Big| = o(1),
\label{eq.rho_central}
\end{equation}
where  $\zeta_j,j\in\mathcal{I}$ are joint Gaussian  random variables with  $\mathbf{E}\zeta_j = 0$
and

\noindent $\mathbf{E}\zeta_{j_1}\zeta_{j_2} = T^{-1}\sum_{i_1 = 1}^T\sum_{i_2 = 1}^T \mathbf{E}Z_{i_1,j_1}Z_{i_2,j_2}$.
\label{theorem.auto_covariance}
\end{theorem}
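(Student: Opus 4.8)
The plan is to reduce both parts to the Gaussian approximation theorem, Theorem~\ref{lemma.Gaussian}, by identifying each centered and scaled statistic with the maximum of a normalized sum $T^{-1/2}\sum_{i=1}^T Z_{i,k}$, where $Z_{i,k}=\sum_{j'=0}^d a_{kj'}(X_iX_{i-j'}-\mathbf{E}X_iX_{i-j'})$, and then showing that every leftover term is uniformly negligible. The key preliminary observation is that condition \eqref{eq.condH} is \emph{literally} condition \eqref{eq.cond_alpha} with $\alpha=\alpha_X$, $\mathcal{B}=\beta_X$ and $\alpha_{p_1}=\beta_X$; the last identification is legitimate because the number of simultaneous combinations is $p_1=|\mathcal{H}|\le d+1=O(T^{\beta_X})$ in part (i), and $p_1=|\mathcal{I}|\le d=O(T^{\beta_X})$ in part (ii). Thus once the coefficient arrays are written down, Theorem~\ref{lemma.Gaussian} fires immediately.

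For part (i) I would first write
\[
\sqrt{T}(\widehat{\sigma}_j-\sigma_j)=\frac{1}{\sqrt{T}}\sum_{i=1}^T(X_iX_{i-j}-\sigma_j)-\frac{1}{\sqrt{T}}\sum_{i=1}^j(X_iX_{i-j}-\sigma_j)-\frac{j\sigma_j}{\sqrt{T}}.
\]
The first term is exactly $T^{-1/2}\sum_{i=1}^T Z_{i,j}$ with the single-lag choice $a_{kj'}=\mathbf{1}\{j'=j\}$, so that $\max_k\sum_{j'}a_{kj'}^2=1=O(1)$, and Theorem~\ref{lemma.Gaussian} yields the Gaussian approximation for $\max_{j\in\mathcal{H}}|T^{-1/2}\sum_i Z_{i,j}|$ with the stated covariance. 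It then remains to kill the two corrections uniformly over $j\in\mathcal{H}$: the deterministic bias $j\sigma_j/\sqrt{T}$ is $O(T^{-1/2})$ because short range dependence forces $|\sigma_j|\le C(1+j)^{-\alpha_X}$ through \eqref{eq.cov}, while the boundary sum is handled by the medium range moment bound of Lemma~\ref{lemma.linear_comb}(i) (each $X_iX_{i-j}-\sigma_j$ being $(m/2,\alpha_X-1,\alpha_X\beta_X)$-medium range by Lemma~\ref{lemma.recognize}) together with the crude maximal inequality $\|\max_j|W_j|\|_{m/2}\le|\mathcal{H}|^{2/m}\max_j\|W_j\|_{m/2}$; the resulting exponent $2\beta_X/m+\alpha_X\beta_X+\beta_X/2-1/2$ is negative under \eqref{eq.condH}.

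For part (ii) I would linearize the ratio. With $\Delta_j=\widehat{\sigma}_j-\sigma_j$ and an expansion of $1/(\sigma_0+\Delta_0)$,
\[
\widehat{\rho}_j-\rho_j=\frac{1}{\sigma_0}\Delta_j-\frac{\sigma_j}{\sigma_0^2}\Delta_0+R_j,
\]
where the linear part is precisely $T^{-1/2}\sum_i Z_{i,j}$ (up to the same edge corrections as above) for the coefficient vector carrying $-\sigma_j/\sigma_0^2$ at lag $0$ and $1/\sigma_0$ at lag $j$. Since $\sigma_0>c$ and $\sup_j|\sigma_j|=O(1)$, one has $\max_k\sum_{j'}a_{kj'}^2=\sigma_j^2/\sigma_0^4+1/\sigma_0^2=O(1)$, so Theorem~\ref{lemma.Gaussian} applies again with the same exponents and the covariance $\mathbf{E}\zeta_{j_1}\zeta_{j_2}$. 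On the event $\{\widehat{\sigma}_0>c/2\}$, whose probability tends to one, the remainder obeys $|R_j|\le C(|\Delta_j||\Delta_0|+\Delta_0^2)$, and I would bound $\sqrt{T}\max_{j\in\mathcal{I}}|R_j|$ using $\max_j\sqrt{T}|\Delta_j|=O_p(T^{2\beta_X/m+\alpha_X\beta_X})$ (Lemma~\ref{lemma.linear_comb}(i) plus the maximal inequality) and $\Delta_0=O_p(T^{-1/2})$ (the $j=0$ product being short range); the product is $O_p(T^{2\beta_X/m+\alpha_X\beta_X-1/2})=o_p(1)$ by \eqref{eq.condH}, and likewise $\sqrt{T}\Delta_0^2=o_p(1)$.

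The hard part will not be the identification with Theorem~\ref{lemma.Gaussian} but the final passage from a uniform $o_p(1)$ approximation of the statistic by its linear part to an $o(1)$ bound on the Kolmogorov distance to the Gaussian maximum: one cannot insert an $o_p(1)$ error inside the supremum over $x$ without an anti-concentration estimate. I would therefore invoke a Nazarov-type bound of the form $\sup_x \mathrm{Prob}(x<\max_j|\xi_j|\le x+\varepsilon)\le C\varepsilon\sqrt{\log|\mathcal{H}|}$, whose applicability is guaranteed by the variance lower bounds \eqref{eq.minVARAUTO} and \eqref{eq.minVARCOEF} (which keep the Gaussian coordinates non-degenerate). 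Taking $\varepsilon$ equal to the $o_p(1)$ error and noting $\sqrt{\log|\mathcal{H}|}=O(\sqrt{\log T})$, the product still vanishes. The central obstacle is thus to drive the edge and linearization remainders to zero fast enough to survive this $\sqrt{\log T}$ loss, which is exactly what the numerology in \eqref{eq.condH} is engineered to deliver.
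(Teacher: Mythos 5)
Your proposal is correct and follows essentially the same route as the paper's own proof: the identical reduction to Theorem~\ref{lemma.Gaussian} with $\alpha=\alpha_X$, $\mathcal{B}=\beta_X$, $\alpha_{p_1}=\beta_X$ (unit-vector coefficients in part (i), the two-entry vector with $-\sigma_j/\sigma_0^2$ at lag $0$ and $1/\sigma_0$ at lag $j$ in part (ii)), the same edge/bias bounds via Lemma~\ref{lemma.recognize}, Lemma~\ref{lemma.linear_comb} and the $\vert\mathcal{H}\vert^{2/m}$ maximal inequality with exponent $2\beta_X/m+\alpha_X\beta_X+\beta_X/2-1/2$, the same ratio linearization on the event $\widehat{\sigma}_0>c/2$, and the same final anti-concentration step to convert the $o_p(1)$ approximation into a Kolmogorov-distance bound. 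The only immaterial deviation is the anti-concentration tool: you invoke Nazarov's inequality with factor $C\varepsilon\sqrt{\log\vert\mathcal{H}\vert}$ (valid, since it needs only the variance lower bounds \eqref{eq.minVARAUTO} and \eqref{eq.minVARCOEF}), whereas the paper's Lemma~\ref{lemma.normal_property}(i) carries an extra factor $L=O(T^{\alpha_X\beta_X})$ coming from the upper bound $\Vert\xi_j\Vert_2\leq CT^{\alpha_X\beta_X}$ in \eqref{eq.xi} and absorbs it through the numerology of \eqref{eq.condH} (e.g.\ the paper verifies $\frac{2\beta_X}{m}+2\alpha_X\beta_X-\frac{1}{2}+\frac{\beta_X}{2}<0$ and $3\alpha_X\beta_X+\frac{4\beta_X}{m}-\frac{1}{2}<0$), so both versions close the argument.
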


\begin{remark}
    For $d$  growing to infinity as the sample size $T$ increases, the maximum $\max_{j\in\mathcal{H}}\vert\xi_j\vert$ and $\max_{j\in\mathcal{I}}\vert\zeta_j\vert$ may  not necessarily converge to a fixed distribution; see \cite{MR2234156}. However, Theorem \ref{theorem.auto_covariance} says that the difference between the two distributions considered is  asymptotically negligible. Therefore, and for a  data set given, statisticians can still use theorem \ref{theorem.auto_covariance} to obtain the desired confidence bands.
\end{remark}

Theorem \ref{theorem.auto_covariance} approximates the distribution of the estimation roots $\max_{j\in\mathcal{H}}\sqrt{T}\vert\widehat{\sigma}_j - \sigma_j\vert$ and $\max_{j\in\mathcal{I}}\sqrt{T}\vert \widehat{\rho}_j - \rho_j\vert$ by the distribution of the maximum of joint normal random variables with appropriate mean and covariance structure.
Notice that  $d$ (and accordingly, the number of elements in $\mathcal{H}$ and $\mathcal{I}$) can grow at a polynomial rate $O(T^{\beta_X})$ as the sample size $T$ increases to infinity. Furthermore,  \eqref{eq.minVARAUTO} and \eqref{eq.minVARCOEF} are introduced to ensure  that the estimation roots $\sqrt{T}(\widehat{\sigma}_j - \sigma_j)$ and $\sqrt{T}(\widehat{\rho}_j - \rho_j)$ do not degenerate asymptotically, that is,  the corresponding sequences do not converge to a constant for some  $j$ given.

Suppose we know the covariances of the estimation roots
$\sqrt{T}(\widehat{\sigma}_j - \sigma_j),\ j\in\mathcal{H}$ and $\sqrt{T}(\widehat{\rho}_j - \rho_j), j\in\mathcal{I}$. In that case, we can use Monte-Carlo, that is, we can generate joint normal pseudo  random variables  and calculate  the $1 - \alpha$ (e.g., $95\%$) quantile of $\max_{j\in\mathcal{H}}\vert\xi_j\vert$ and $\max_{j\in\mathcal{I}}\vert\zeta_j\vert$. Once these
quantiles are available,  simultaneous confidence intervals for $\sigma_j$ and $\rho_j$ can be constructed.  However, since $X_i, i\in\mathbf{Z}$ are allowed to be  non stationary,
estimating the covariances of the corresponding estimation roots is not straightforward. We postpone this topic to Section \ref{section.bootstrap}, where we will propose a bootstrap algorithm that automatically generates the desired confidence bands.

\section{Simultaneous inference for  autoregressive coefficients}
\label{section.AR_coef}
In this section we derive a Gaussian approximation result   for   the estimated autoregressive coefficients obtained by fitting a  $p$th order autoregression (AR(p)) to a (possibly nonlinear) weakly stationary time series. To elaborate, consider  the AR(p) model,
\begin{equation}
\begin{aligned}
X_i = \epsilon_i + \sum_{j = 1}^p a_jX_{i - j},\
\end{aligned}
\label{eq.def_AR}
\end{equation}
where  $\epsilon_i\sim WN(0,\sigma^2)$,
is fitted  by means of minimizing the mean square one-step ahead prediction error $ E(X_i-\sum_{j=1}^pa_jX_{i-j})^2$.  We call $(a_1,...,a_p)$ the auto-regressive coefficients (or AR coefficients for simplicity).

Multiplying by $X_{i - k}$, $ k = 1,2,...,p$, on both sides of eq.\eqref{eq.def_AR} and taking expectations, we arrive at  the well-known Yule-Walker equations (see Section 8.1, \cite{MR1093459}), i.e.,
\begin{equation}
\sigma_k = \sum_{j = 1}^p a_j \sigma_{\vert j - k\vert},\ \text{where $\sigma_k = \mathbf{E}X_0X_{-k}$}
\label{eq.Yule_walker}
\end{equation}
for $k=1,2, \ldots, p$. Replacing the autocovariances $\sigma_j$ by sample estimators and
 solving the corresponding  linear system, i.e.,
 \begin{equation}
     \widehat{\sigma}_k = \sum_{j = 1}^p \widehat{a}_j\widehat{\sigma}_{\vert j - k\vert},\ \text{where } \widehat{\sigma}_k \text{ satisfies equation  \eqref{eq.def_sigma}},
     \label{eq.def_SAMPLEAR}
 \end{equation}
leads to the  set of moment estimators of $a_{j}, j=1,2, \ldots, p$, known as Yule-Walker estimators.

Since it is more reasonable to assume that  the  observed time series does not necessarily stem from a linear  AR process, i.e., it satisfies (\ref{eq.def_AR}) with i.i.d. instead of white noise innovations,  it is important to derive the limiting distribution  of the  autoregressive estimators under such an  assumption.

Notice that different motivations may lead to fitting AR($p$) models to a time series at hand.
For instance,  \cite{MR2893863} used the fact that, under certain conditions,     a general strictly stationary process has  a general  AR($\infty$) representation, i.e., $X_i$ can be expressed as
  $X_i = \epsilon_i+ \sum_{j = 1}^\infty a_jX_{i - j}$. Here the  innovations $\epsilon_i$ are   white noises and not necessarily   i.i.d..
Furthermore,  according to \cite{MR3331856} and Section 2.7 in \cite{MR1093459}, the solution of the $p$th order system of Yule-Walker equation, determines  the
coefficients of the best linear predictor of $X_i$ in the Hilbert space spanned by $X_{i - 1},...,X_{i - p}$. More precisely, the best linear
predictor of $X_i$  is given by $\sum_{j = 1}^p \beta_j X_{i - j}$, which by  (2.7.15) in  \cite{MR1093459}, satisfies
\begin{equation}
\mathbf{E}X_iX_{i - k} = \sum_{j = 1}^p \beta_j \mathbf{E}X_{i - j}X_{i - k}\ \text{for } k  = 1,2,...,p
\label{eq.def_YULE_WALKER}
\end{equation}
and, therefore,  coincides with \eqref{eq.Yule_walker}.


\begin{definition}[AR coefficients of  order $p$ for a general time series]
Suppose  that the stochastic process  $\{X_i, i\in\mathbf{Z}\}$ is  weakly stationary (i.e., satisfies Definition 1.3.2 in \cite{MR1093459}), and
for a given positive integer $p$, suppose that the   solution $a_1,...,a_p$ of the  Yule-Walker system
\begin{equation}
\sigma_k = \sum_{j = 1}^p a_j \sigma_{\vert k - j\vert},\text{where }  k = 1,2,...,p.
\label{eq.def_NEWAR}
\end{equation}
exists. Then,  $a_1,...,a_p$  are called the AR coefficients of order  $p$.
\label{def.AR_coef}
\end{definition}
If $\{X_i\}$ possesses  a strictly  positive spectral density (see Corollary 4.3.2 and Proposition 4.5.3 in \cite{MR1093459}), then the
covariance matrix $\{\sigma_{\vert k - j\vert}\}, k,j = 1,...,p$ is
positive definite for any $p > 0$,  and, therefore, the AR coefficients are well-defined.

\begin{corollary}
Suppose $\{X_i, i\in\mathbf{Z}\}$ is weakly stationary, $\mathbf{E}X_i =  0$ and the matrix $\{\sigma_{\vert j - k\vert}\}_{j,k = 1,...,p}$ is
positive definite.
Then the coefficients
$a_1,...,a_p$ in \eqref{eq.def_AR} satisfy Definition \ref{def.AR_coef}.
\end{corollary}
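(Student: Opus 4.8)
The plan is to exploit the fact that, by construction, the coefficients $a_1,\dots,a_p$ appearing in \eqref{eq.def_AR} are the minimizers of the one-step-ahead prediction mean square error $Q(a_1,\dots,a_p) = \mathbf{E}(X_i - \sum_{j=1}^p a_j X_{i-j})^2$, and to show that the first-order conditions characterizing this minimizer are precisely the Yule-Walker system \eqref{eq.def_NEWAR} of Definition \ref{def.AR_coef}. Equivalently, one may phrase the whole argument through the projection theorem: $\sum_{j=1}^p a_j X_{i-j}$ is the orthogonal projection of $X_i$ onto the Hilbert space $\mathcal{H}_{i-1,i-p}$ spanned by $X_{i-1},\dots,X_{i-p}$ under the inner product $\langle a,b\rangle = \mathbf{E}(ab)$, so that the prediction residual is orthogonal to each of the generators $X_{i-k}$.

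First I would expand $Q$ as a quadratic form in $(a_1,\dots,a_p)$ using weak stationarity, so that its Hessian is exactly the matrix $\{\sigma_{|j-k|}\}_{j,k=1,\dots,p}$. Differentiating with respect to $a_k$ and setting the gradient to zero yields, for $k = 1,\dots,p$,
\begin{equation*}
\mathbf{E}\Big[\Big(X_i - \sum_{j=1}^p a_j X_{i-j}\Big)X_{i-k}\Big] = 0,
\end{equation*}
which, after rearranging and invoking $\sigma_k = \mathbf{E}X_0 X_{-k}$ together with weak stationarity, is exactly $\sigma_k = \sum_{j=1}^p a_j\sigma_{|k-j|}$. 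This coincides with \eqref{eq.def_NEWAR} (and also with the projection characterization \eqref{eq.def_YULE_WALKER} already recorded in the text). Thus the minimizing coefficients necessarily satisfy the Yule-Walker equations.

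It then remains to verify that Definition \ref{def.AR_coef} is actually applicable, i.e., that the Yule-Walker system admits a solution. This is where the positive definiteness hypothesis enters: since $\{\sigma_{|j-k|}\}_{j,k=1,\dots,p}$ is positive definite it is invertible, so the linear system \eqref{eq.def_NEWAR} has a unique solution; moreover, positive definiteness of the Hessian makes $Q$ strictly convex, which guarantees that the stationary point found above is the unique global minimizer. Hence the minimizer of $Q$ exists, equals the unique solution of \eqref{eq.def_NEWAR}, and therefore coincides with the AR coefficients of order $p$ in the sense of Definition \ref{def.AR_coef}.

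I expect no serious obstacle here, since this is essentially the classical best-linear-prediction / normal-equations argument. The only point that requires genuine care is to keep the two characterizations straight — the variational one implicit in \eqref{eq.def_AR} and the algebraic one in Definition \ref{def.AR_coef} — and to invoke positive definiteness twice, once for the existence and uniqueness of the Yule-Walker solution and once to certify that the critical point of $Q$ is its global minimizer rather than merely a stationary point.
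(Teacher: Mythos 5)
Your proof is correct and takes essentially the same route as the paper, whose entire proof consists of observing that the Yule--Walker system \eqref{eq.Yule_walker}, obtained from \eqref{eq.def_AR} by multiplying by $X_{i-k}$ and taking expectations, is literally the system \eqref{eq.def_NEWAR} of Definition \ref{def.AR_coef}. You merely spell out what the paper leaves implicit, namely that the first-order conditions of the mean-square prediction-error minimization (equivalently, orthogonality of the residual $\epsilon_i$ to $X_{i-1},\dots,X_{i-p}$) produce exactly those normal equations, and that positive definiteness of $\{\sigma_{\vert j-k\vert}\}_{j,k=1,\dots,p}$ guarantees both the unique solvability of the system and that the critical point is the global minimizer.
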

\begin{proof}
The corollary follows  by comparing \eqref{eq.def_NEWAR} and \eqref{eq.Yule_walker}.
\end{proof}

After adopting Definition \ref{def.AR_coef} we  can apply the results of Section \ref{section.medium_range}
and establish a Gaussian approximation result  for the AR coefficients. We state this result as Theorem
\ref{theorem.Gaussian_ARCOEF}. To formulate this theorem, we need to introduce some additional notation. Define  the matrix
$\Sigma = \{\sigma_{\vert j - k\vert}\}_{j,k = 1,...,p}$ and assume that  it is non-singular for every $p\in\mathbf{N}$. Let  $\mathbf{e}_i = (\underbrace{0,0,...,0}_{i - 1}, 1, 0,...,0)^T\in\mathbf{R}^p$ be the vector with the one appearing in the $i$th position,
$\gamma = (\sigma_1, \sigma_2,...,\sigma_p)^T$ and $ T_i$ the $p\times p$ matrix,  $T_i = \{t_{\vert j - k \vert}^{(i)}\}_{j,k = 1,...,p},\ i = 0,1,...,p-1$ such   that $t_{s}^{(i)} = 1$ if $s = i$ and $0$ otherwise.
Define the matrix $B = \{b_{jk}\}_{j = 1,...,p, k = 0,1,...,p} = (\mathbf{b}_0,\mathbf{b}_1,...,\mathbf{b}_p)\in\mathbf{R}^{p\times (p + 1)}$ such that
$\mathbf{b}_0 = -\Sigma^{-2}\gamma$; $\mathbf{b}_i = \Sigma^{-1}\mathbf{e}_i - \Sigma^{-1}T_i\Sigma^{-1}\gamma$ for $i = 1,...,p-1$;
and $\mathbf{b}_p = \Sigma^{-1}\mathbf{e}_p$. Finally, for $j=1,2, \ldots, p$, let
\begin{equation}
Z_{i,j} = \sum_{k = 0}^p b_{jk}(X_iX_{i - k} - \sigma_{k}^{(i)}).
\label{eq.def_ARcoef}
\end{equation}
We can then establish the following result.

\begin{theorem}[Gaussian Approximation for AR Coefficients]
Suppose $\{X_i,i\in\mathbf{Z}\}$ are $(m,\alpha)$-short range dependent random variables with $m\geq 8$ and $\alpha > 2$.
Let  $p$ be  a positive integer such
that $p = O(1)$.  In addition suppose that $\{X_i\}$ is  weakly stationary  and there exists a constant $c > 0$ such that the smallest eigenvalue of $\Sigma$ is greater than $c$.
Define $Z_{i,j}$ as in \eqref{eq.def_ARcoef} and assume
\begin{equation}
\Vert\frac{1}{\sqrt{T}}\sum_{i = 1}^TZ_{i, j}\Vert_2 > C\ \text{for a constant $C>0$ and any $j = 1,...,p$.}
\label{eq.cond_AUTOCORR}
\end{equation}
\label{theorem.Gaussian_ARCOEF}
Then,
\begin{equation}
\sup_{x\in\mathbf{R}}\Big| Prob\left(\max_{j = 1,...,p}\vert\sqrt{T}(\widehat{a}_j - a_j)\vert\leq x\right) - Prob\left(\max_{j = 1,...,p}\vert\xi_j\vert\leq x\right)\Big|
= o(1),
\label{eq.auto_coeff}
\end{equation}
where  $a_1,...,a_p$ are the AR coefficients satisfying Definition \ref{def.AR_coef} and $\xi_1,...,\xi_p$ are joint normal random variables with  $\mathbf{E}\xi_j = 0$ and
\noindent $\mathbf{E}\xi_{j_1}\xi_{j_2} = T^{-1}\sum_{i_1 = 1}^T\sum_{i_2 = 1}^T \mathbf{E}Z_{i_1, j_1}Z_{i_2, j_2}$;
\end{theorem}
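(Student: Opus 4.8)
The plan is to treat the Yule-Walker estimator as a smooth function of the sample autocovariances and to linearize it, so that the problem reduces to the Gaussian approximation already available in Theorem~\ref{lemma.Gaussian}. Writing $\widehat{\Sigma}$ for the sample analogue of $\Sigma$ and $\widehat{\gamma} = (\widehat{\sigma}_1,\dots,\widehat{\sigma}_p)^T$, the system \eqref{eq.def_SAMPLEAR} reads $\widehat{a} = \widehat{\Sigma}^{-1}\widehat{\gamma}$, while Definition~\ref{def.AR_coef} gives $a = \Sigma^{-1}\gamma$. The matrix $B$ is precisely the Jacobian of the map $(\sigma_0,\sigma_1,\dots,\sigma_p)\mapsto \Sigma^{-1}\gamma$: the column $\mathbf{b}_0 = -\Sigma^{-2}\gamma$ records the derivative in the direction $\sigma_0$, which enters only the diagonal $T_0 = I$ of $\Sigma$; the columns $\mathbf{b}_i = \Sigma^{-1}\mathbf{e}_i - \Sigma^{-1}T_i\Sigma^{-1}\gamma$ record the derivatives in $\sigma_i$, which enters both $\gamma$ through $\mathbf{e}_i$ and $\Sigma$ through $T_i$; and $\mathbf{b}_p = \Sigma^{-1}\mathbf{e}_p$ records the derivative in $\sigma_p$, which enters only $\gamma$. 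Consequently the $j$th coordinate of the first-order term equals $\sum_{k=0}^p b_{jk}(\widehat{\sigma}_k - \sigma_k)$, and since weak stationarity gives $\mathbf{E}X_iX_{i-k}=\sigma_k$ and $\sqrt{T}(\widehat{\sigma}_k - \sigma_k) = T^{-1/2}\sum_{i=1}^T (X_iX_{i-k} - \sigma_k)$ up to an $O_p(T^{-1/2})$ edge correction (the range $i=k+1,\dots,T$ differs from $1,\dots,T$ by at most $p=O(1)$ terms), this first-order term is exactly $T^{-1/2}\sum_{i=1}^T Z_{i,j}$ with $Z_{i,j}$ as in \eqref{eq.def_ARcoef}.

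First I would make this linearization rigorous. Under the hypothesis that the smallest eigenvalue of $\Sigma$ exceeds $c$, the operator norm $|\Sigma^{-1}|_2$ is bounded, and Lemma~\ref{lemma.linear_comb} together with Lemma~\ref{lemma.recognize} yields $\widehat{\sigma}_k - \sigma_k = O_p(T^{-1/2})$ for each of the finitely many $k = 0,1,\dots,p$. Hence $|\widehat{\Sigma} - \Sigma|_2 = O_p(T^{-1/2})$, so with probability tending to one $\widehat{\Sigma}$ is invertible and admits the Neumann expansion $\widehat{\Sigma}^{-1} = \Sigma^{-1} - \Sigma^{-1}(\widehat{\Sigma}-\Sigma)\Sigma^{-1} + \Sigma^{-1}(\widehat{\Sigma}-\Sigma)\Sigma^{-1}(\widehat{\Sigma}-\Sigma)\widehat{\Sigma}^{-1}$. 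Substituting into $\widehat{a} - a = \widehat{\Sigma}^{-1}\widehat{\gamma} - \Sigma^{-1}\gamma$ and collecting terms shows that
\[
\sqrt{T}(\widehat{a}_j - a_j) = \frac{1}{\sqrt{T}}\sum_{i=1}^T Z_{i,j} + R_{T,j},
\]
where the remainder $R_{T,j}$ collects products of at least two factors of order $O_p(T^{-1/2})$ and is therefore $R_{T,j} = \sqrt{T}\,O_p(T^{-1}) = O_p(T^{-1/2})$; as $p = O(1)$ this bound is uniform, i.e.\ $\max_{j=1,\dots,p}|R_{T,j}| = o_p(1)$.

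Next I would apply Theorem~\ref{lemma.Gaussian} to the linear statistic $T^{-1/2}\sum_{i=1}^T Z_{i,j}$, taking the array of coefficients to be the entries of $B$, with $d = p$ and $p_1 = p$. The coefficient condition $\max_{j}\sum_{k=0}^p b_{jk}^2 = O(1)$ holds because every column of $B$ is a bounded-length product of the bounded matrices $\Sigma^{-1}$, $T_i$ and the bounded vectors $\mathbf{e}_i$, $\gamma$. Because $p = O(1)$ we may take $\mathcal{B} = 0$ and $\alpha_{p_1} = 0$, so \eqref{eq.cond_alpha} degenerates to the trivially satisfiable requirement $0 < \alpha_s < \alpha_l < 1$, while the non-degeneracy condition \eqref{eq.cond_AUTOCORR} supplies the required lower bound $\Vert T^{-1/2}\sum_i Z_{i,j}\Vert_2 > C$. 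Theorem~\ref{lemma.Gaussian} then delivers $\sup_x|Prob(\max_j|T^{-1/2}\sum_i Z_{i,j}|\le x) - Prob(\max_j|\xi_j|\le x)| = o(1)$ with exactly the covariance structure stated in the theorem.

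Finally I would transfer this approximation from the linear statistic to $\sqrt{T}(\widehat{a}_j - a_j)$ using the uniform remainder bound and anti-concentration. For any $\eta>0$ the event $\{\max_j|R_{T,j}|\le\eta\}$ has probability tending to one, and on it the usual sandwiching gives $Prob(\max_j|T^{-1/2}\sum_i Z_{i,j}|\le x-\eta) - o(1) \le Prob(\max_j\sqrt{T}|\widehat{a}_j - a_j|\le x) \le Prob(\max_j|T^{-1/2}\sum_i Z_{i,j}|\le x+\eta) + o(1)$. Replacing the linear-statistic probabilities by the Gaussian ones via the previous step, and using that under \eqref{eq.cond_AUTOCORR} the maximum $\max_j|\xi_j|$ of finitely many non-degenerate Gaussians has a bounded concentration function, letting $\eta\to 0$ slowly removes the $\eta$-shift and yields \eqref{eq.auto_coeff}. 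The main obstacle is the rigorous control of the linearization remainder, namely showing that the quadratic Neumann term, after multiplication by $\sqrt{T}$, is genuinely $o_p(1)$ uniformly in $j$; this rests on the operator-norm bound for $\widehat{\Sigma}^{-1}$ on the high-probability event where $\widehat{\Sigma}$ is well-conditioned, which in turn relies on the moment bounds of Lemma~\ref{lemma.linear_comb} applied through the product recognition in Lemma~\ref{lemma.recognize}. The anti-concentration step is comparatively routine here because $p$ is fixed.
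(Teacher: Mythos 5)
Your proposal is correct and follows essentially the same route as the paper's own proof: linearize $\widehat{a}=\widehat{\Sigma}^{\dagger}\widehat{\gamma}$ around $a=\Sigma^{-1}\gamma$ via the perturbation (Neumann-type) expansion of the inverse, identify the linear term $\sum_{k=0}^p b_{jk}(\widehat{\sigma}_k-\sigma_k)$ with $T^{-1}\sum_{i=1}^T Z_{i,j}$ up to $O_p(1/T)$ edge terms, invoke Theorem~\ref{lemma.Gaussian} with $\mathcal{B}=\alpha_{p_1}=0$ using the boundedness of the rows of $B$ guaranteed by the eigenvalue condition on $\Sigma$, and transfer the approximation through the $o_p(1)$ remainder by the Gaussian anti-concentration bound of Lemma~\ref{lemma.normal_property}. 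Your Jacobian interpretation of $B$ and the remainder accounting ($\sqrt{T}\cdot O_p(1/T)=o_p(1)$, uniform over the fixed number of coordinates) match the paper's argument step for step.
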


As in Section \ref{section.autocovariance}, the main problem in implementing the result  of Theorem \ref{theorem.Gaussian_ARCOEF} is that the covariances of
the  random  variables  $\sqrt{T}(\widehat{a}_j - a_j)$, $j=1,2, \ldots, p$,  are unknown and hard to estimate. The next section deals with this problem.

\section{Bootstrap based inference}
\label{section.bootstrap}
So far we have established  Gaussian approximation results  for  sample
autocovariances,  sample autocorrelations, and  estimates of the  AR coefficients. In this section,   we discuss how to implement these results for the construction of confidence bands or tests regarding the corresponding population parameters. As already mentioned, one important problem in this context is the estimation  of  the
covariances of the random variables  considered which is affected by  the fourth order moment structure of the underlying process. We  use the notation $Prob^*$ and $\mathbf{E}^*$, defined as
$Prob^*(\cdot) = Prob(\cdot |X_1,...,X_T)$ and
$\mathbf{E}^*\cdot = \mathbf{E}(\cdot|X_1,...,X_T)$, to represent probability and expectation in the bootstrap world.


We begin with Lemma \ref{lemma.consistent_variance}  which  derives  consistent estimators of the covariances of interest.

\begin{lemma}
Suppose $K(\cdot): \mathbf{R}\to [0,\infty)$ is a kernel function satisfying Definition \ref{def.kernel} and $k_T\to\infty$ as  $T\to\infty$.
 The random variables $\{X_i, i\in\mathbf{Z}\}$ are $(m,\alpha_X)-$short range dependent  with $m\geq 8,\ \alpha_X > 2$.

(i) Suppose that  $d = O(T^{\beta_X})$ is a positive integer, that the set $\mathcal{H}\subset\{0,1,...,d\}$ is not empty and that  \eqref{eq.condH} and
\eqref{eq.minVARAUTO} hold true. Then,
\begin{equation}
\begin{aligned}
\max_{j_1, j_2\in\mathcal{H}}\Big| \frac{1}{T}\sum_{i_1 = j_1 + 1}^T\sum_{i_2 = j_2 + 1}^TK\left(\frac{i_1 - i_2}{k_T}\right)(X_{i_1}X_{i_1 - j_1} - \widehat{\sigma}_{j_1})(X_{i_2}X_{i_2 - j_2}  - \widehat{\sigma}_{j_2})\\
-\frac{1}{T}\sum_{i_1 = 1}^T\sum_{i_2 = 1}^T\mathbf{E} (X_{i_1}X_{i_1 - j_1} - \sigma_{j_1}) (X_{i_2}X_{i_2 - j_2} - \sigma_{j_2})
\Big|\\
= O_p\left(v_T\times T^{\alpha_X\beta_X} + k_T\times T^{\frac{8\beta_X}{m} + \alpha_X\beta_X - \frac{1}{2}}\right)
\end{aligned}
\label{eq.sigmaX}
\end{equation}

(ii) Suppose that $d = O(T^{\beta_X})$ is a positive integer,  the set $\mathcal{I}\subset\{1,...,d\}$ is not empty and that  \eqref{eq.condH} and
\eqref{eq.minVARCOEF} hold true. Recall the definition of $Z_{i,j}$ in Theorem~\ref{th.th2}(ii) and let
\begin{equation}
\widehat{Z}_{i,j} = - \frac{\widehat{\sigma}_j}{\widehat{\sigma}^2_0}(X_i^2 - \widehat{\sigma}_0) + \frac{1}{\widehat{\sigma}_0}(X_iX_{i - j} - \widehat{\sigma}_j).
\label{eq.def_autocorr}
\end{equation}
Then
\begin{equation}
\begin{aligned}
\max_{j_1, j_2\in\mathcal{I}}\Big| \frac{1}{T}\sum_{i_1 = j_1 + 1}^T\sum_{i_2 = j_2 + 1}^TK\left(\frac{i_1 - i_2}{k_T}\right)\widehat{Z}_{i_1,j_1}\widehat{Z}_{i_2, j_2}
 - \frac{1}{T}\sum_{i_1 = 1}^T\sum_{i_2 = 1}^T\mathbf{E} Z_{i_1,j_1}Z_{i_2,j_2}
\Big|\\
= O_p\left(v_T\times T^{\alpha_X\beta_X} + k_T\times T^{\frac{8\beta_X}{m} + \alpha_X\beta_X - \frac{1}{2}}\right).
\end{aligned}
\label{eq.sigmaCorr}
\end{equation}

(iii) Suppose that the conditions of Theorem~\ref{theorem.Gaussian_ARCOEF} are satisfied. Define
\begin{equation}
\widehat{Z}_{i,j} = \sum_{k = 0}^p \widehat{b}_{jk}(X_iX_{i - k} - \widehat{\sigma}_{k})
\label{eq.defZHAT}
\end{equation}
and $Z_{i,j}$ as in \eqref{eq.def_ARcoef}. Here $\widehat{B} = \{\widehat{b}_{jk}\}_{j = 1,...,p, k = 0,1,...,p} = (\widehat{\mathbf{b}}_0,...,\widehat{\mathbf{b}}_p)\in\mathbf{R}^{p\times (p + 1)}$;
$\widehat{\mathbf{b}}_0 = -\widehat{\Sigma}^{\dagger 2}\widehat{\gamma}$; $\widehat{\mathbf{b}}_i = \widehat{\Sigma}^{\dagger }\mathbf{e}_i - \widehat{\Sigma}^{\dagger }T_i\widehat{\Sigma}^{\dagger }\widehat{\gamma}$
for $i = 1,...,p-1$; and $\widehat{\mathbf{b}}_p = \widehat{\Sigma}^{\dagger }\mathbf{e}_p$. We refer to eq.\eqref{eq.def_sigma} and to Section \ref{section.AR_coef}
for the notation used. Then,
\begin{equation}
\begin{aligned}
\sup_{j_1,j_2 = 1,...,p}\Big| \frac{1}{T}\sum_{i_1 = j_1 + 1}^T\sum_{i_2 = j_2 + 1}^TK\left(\frac{i_1 - i_2}{k_T}\right)\widehat{Z}_{i_1,j_1}\widehat{Z}_{i_2, j_2}
- \frac{1}{T}\sum_{i_1 = 1}^T\sum_{i_2 = 1}^T \mathbf{E}Z_{i_1,j_1}Z_{i_2,j_2}\Big|\\
= O_p(v_T + k_T\times T^{ - \frac{1}{2}}),
\end{aligned}
\label{eq.third}
\end{equation}
\label{lemma.consistent_variance}
where $v_T$ coincides  with {Lemma \ref{lemma.covariance} and  $\alpha = \alpha_X$.}
\end{lemma}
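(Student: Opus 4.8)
The plan is to handle all three parts with a common two-step reduction. In each case the feasible estimator is a kernel-weighted quadratic form in a statistic $\widehat Z_{i,j}$ (namely $X_iX_{i-j}-\widehat\sigma_j$ in (i), the ratio statistic \eqref{eq.def_autocorr} in (ii), and \eqref{eq.defZHAT} in (iii)), while its population-centred analogue $Z_{i,j}$ is exactly the object to which Lemma \ref{lemma.covariance} applies. Writing $\widehat Z_{i,j}=Z_{i,j}+R_{i,j}$, the difference between the feasible quadratic form and the infeasible one $T^{-1}\sum_{i_1,i_2}K((i_1-i_2)/k_T)Z_{i_1,j_1}Z_{i_2,j_2}$ splits into the three pieces $RZ$, $ZR$ and $RR$. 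Since $K\ge 0$ and $\sum_{i_2}K((i_1-i_2)/k_T)\le Ck_T$ (the kernel is nonnegative, decreasing and integrable by Definition \ref{def.kernel}), a kernel-weighted Cauchy--Schwarz inequality bounds each cross piece by $Ck_T\,(\max_j T^{-1}\sum_i R_{i,j}^2)^{1/2}(\max_j T^{-1}\sum_i Z_{i,j}^2)^{1/2}$ and the quadratic piece by $Ck_T\max_j T^{-1}\sum_i R_{i,j}^2$. Because $\max_j T^{-1}\sum_i Z_{i,j}^2=O_p(1)$ up to negligible factors, the whole plug-in correction is $O_p(k_T\rho_T)$ with $\rho_T=(\max_j T^{-1}\sum_i R_{i,j}^2)^{1/2}$; the infeasible part is supplied directly by Lemma \ref{lemma.covariance}, and a boundary adjustment of order $dk_T/T$ accounts for the summation starting at $i=j+1$ rather than $i=1$. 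It therefore remains, in each part, to identify $R_{i,j}$ and to bound $\rho_T$.

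For part (i), $Z_{i,j}=X_iX_{i-j}-\sigma_j$, so Lemma \ref{lemma.covariance} with $\alpha_{p_1}$ and $\mathcal B$ both equal to $\beta_X$ yields precisely the order in \eqref{eq.sigmaX} for the infeasible part. Here $R_{i,j}=-(\widehat\sigma_j-\sigma_j)$ is constant in $i$, so $\rho_T=\max_{j\in\mathcal H}|\widehat\sigma_j-\sigma_j|$. By Lemma \ref{lemma.recognize}(i) the products $X_iX_{i-j}-\sigma_j$ are $(m/2,\alpha_X-1,\alpha_X\beta_X)$-medium range dependent, whence Lemma \ref{lemma.linear_comb}(i) gives $\Vert\widehat\sigma_j-\sigma_j\Vert_{m/2}=O(T^{\alpha_X\beta_X-1/2})$, and a maximal inequality over the $O(T^{\beta_X})$ lags produces $\rho_T=O_p(T^{2\beta_X/m+\alpha_X\beta_X-1/2})$. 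Since $2\beta_X/m\le 8\beta_X/m$, the correction $k_T\rho_T$ is dominated by $k_T T^{8\beta_X/m+\alpha_X\beta_X-1/2}$, proving \eqref{eq.sigmaX}.

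Part (ii) follows the same template. Condition \eqref{eq.minVARCOEF} forces $\sigma_0>c$ and $\widehat\sigma_0\to_p\sigma_0$, so on an event of probability tending to one the maps $\widehat\sigma_0\mapsto 1/\widehat\sigma_0$ and $(\widehat\sigma_j,\widehat\sigma_0)\mapsto\widehat\sigma_j/\widehat\sigma_0^2$ are smooth; a first-order expansion then represents $R_{i,j}$ as a linear combination of $X_i^2$, $X_iX_{i-j}$ and constants whose coefficients are $O_p(\max(|\widehat\sigma_0-\sigma_0|,\max_j|\widehat\sigma_j-\sigma_j|))$, plus a quadratic remainder of the same order. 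Hence $\rho_T=O_p(T^{2\beta_X/m+\alpha_X\beta_X-1/2})$ exactly as in (i), the infeasible part is again controlled by Lemma \ref{lemma.covariance}, and \eqref{eq.sigmaCorr} follows.

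For part (iii) we are in the regime $\beta_X=0$, $p=O(1)$, so Lemma \ref{lemma.covariance} with $\mathcal B=0$ and $\alpha_{p_1}=0$ gives the infeasible rate $O_p(v_T+k_T T^{-1/2})$. The new ingredient is a rate for the estimated coefficients: since the smallest eigenvalue of $\Sigma$ exceeds $c$, we have $\widehat\Sigma^{\dagger}=\widehat\Sigma^{-1}$ with probability tending to one, the map $(\Sigma,\gamma)\mapsto B$ built from $\Sigma^{-1}$, the fixed matrices $T_i$ and $\gamma$ is Lipschitz on a neighbourhood, and $\Vert\widehat\Sigma-\Sigma\Vert_2\vee|\widehat\gamma-\gamma|_2=O_p(T^{-1/2})$; thus $\max_{j,k}|\widehat b_{jk}-b_{jk}|=O_p(T^{-1/2})$ and $\max_k|\widehat\sigma_k-\sigma_k|=O_p(T^{-1/2})$. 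Writing $R_{i,j}=\sum_k(\widehat b_{jk}-b_{jk})(X_iX_{i-k}-\sigma_k)-\sum_k b_{jk}(\widehat\sigma_k-\sigma_k)$ plus a quadratic remainder gives $\rho_T=O_p(T^{-1/2})$, so the correction is $O_p(k_T T^{-1/2})$ and \eqref{eq.third} follows. The step I expect to be the main obstacle is the interplay in parts (ii) and (iii) between linearizing the nonlinear plug-ins (the ratios, respectively the matrix inverse) on a high-probability event where denominators and eigenvalues stay bounded away from zero, and simultaneously keeping the kernel-inflated, uniform-over-$j$ corrections $k_T\rho_T$ below the leading $v_T$ and $k_T T^{\cdots}$ terms, which is exactly what \eqref{eq.condH} is calibrated to guarantee.
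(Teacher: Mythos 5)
Your proposal is correct and takes essentially the same route as the paper: the infeasible covariance is handled by Lemma \ref{lemma.covariance} (with $\mathcal{B}=\alpha_{p_1}=\beta_X$ in parts (i)--(ii) and $\mathcal{B}=\alpha_{p_1}=0$ in part (iii)), and the plug-in error is controlled by writing $\widehat Z_{i,j}=Z_{i,j}+R_{i,j}$, using the kernel row-sum bound $\sum_{i_2}K((i_1-i_2)/k_T)=O(k_T)$ together with Cauchy--Schwarz, the rate $\max_j\vert\widehat\sigma_j-\sigma_j\vert=O_p(T^{2\beta_X/m+\alpha_X\beta_X-1/2})$, and (in part (iii)) the resolvent expansion giving $\vert\widehat{\mathbf{b}}_k-\mathbf{b}_k\vert_2=O_p(T^{-1/2})$ on the event where $\widehat\Sigma$ is invertible --- which is exactly the paper's argument for parts (ii) and (iii). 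The one substantive divergence is in part (i): there the paper exploits that $R_{i,j}=-(\widehat\sigma_j-\sigma_j)$ is constant in $i$ and bounds the cross term by applying Lemma \ref{lemma.linear_comb} to the weighted sum $\sum_{i_1}\left(\sum_{i_2}K\left(\frac{i_1-i_2}{k_T}\right)\right)(X_{i_1}X_{i_1-j_1}-\sigma_{j_1})$, obtaining the sharper order $O_p(k_T T^{2\alpha_X\beta_X+6\beta_X/m-1})$, whereas your uniform Cauchy--Schwarz template gives a coarser bound of order $k_T\rho_T T^{O(\beta_X/m)}$; since this still sits below $k_T T^{8\beta_X/m+\alpha_X\beta_X-1/2}$, nothing is lost. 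Two of your quoted orders are slightly imprecise but harmless: for unbounded variables the boundary correction is not $O(dk_T/T)$ but rather of order $k_T T^{\beta_X/2-1/2}$ up to $T^{c\beta_X/m}$ factors from the union over $O(T^{\beta_X})$ lags (the paper gets $O_p(k_T T^{8\beta_X/m+\beta_X/2-1/2})$, dominated since $\alpha_X>2$ implies $\beta_X/2<\alpha_X\beta_X$), and in part (ii) the linearization residual carries $T^{4\beta_X/m}$ rather than your $T^{2\beta_X/m}$ because the empirical second moments of $X_i^2-\sigma_0$ and $X_iX_{i-j}-\sigma_j$ enter through a maximum over the lags; both discrepancies stay within the claimed rate under \eqref{eq.condH}.
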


Calculating the covariances stated above is tedious. Moreover, there is no explicit formula or table for the  quantiles for the maximum of joint
Gaussian random variables, so some simulations are required to derive the quantiles. Therefore, it is not straightforward to perform statistical inference based on  classical methods,  see for instance,  Section 4.4 in \cite{foundation_statistics}.
An alternative approach  is through using bootstrapping techniques.  One advantage of adopting such a technique  is that  complex calculations can be avoided.  In the following, we present  a
bootstrap procedure  which achieves the desired goal.

Our focus is  on  deriving  simultaneous confidence intervals or on performing hypothesis tests for  autocovariances,  autocorrelations and AR coefficients.  More specifically, we are focus on testing the hypothesis
\begin{equation}
\text{H$_0$:  } \sigma_k = \sigma_k^{(e)}\ \text{for all } k\in\mathcal{H}\ \text{versus H$_1$: } \exists k_1\in\mathcal{H} \text{ such that } \sigma_{k_1}\neq \sigma_{k_1}^{(e)}
\label{eq.test_sigma}
\end{equation}
for autocovariances, or the hypotheses
\begin{equation}
\text{H$_0$: } \rho_k = \rho_k^{(e)}\ \text{for all } k\in\mathcal{I}\ \text{versus the H$_1$: } \exists k_1\in\mathcal{I}\ \text{such that } \rho_{k_1}\neq \rho_{k_1}^{(e)}
\label{eq.test_rho}
\end{equation}
for  autocorrelations, or the hypotheses
\begin{equation}
\begin{aligned}
\text{H$_0$: } a_k = a_k^{(e)}\ \text{for all } k = 1,...,d\ \text{versus H$_1$: } \exists k_1 \in\{1,2,...,d\}\\
\text{such that } a_{k_1}\neq a_{k_1}^{(e)}
\label{eq.test_a}
\end{aligned}
\end{equation}
for  AR coefficients. Here $\sigma_k^{(e)},\ \rho_k^{(e)}$ and $a_k^{(e)}$  are given hypothesized values. An interesting  special case is that when  $\rho_{k}^{(e)} = \sigma_k^{(e)} = 0$, which is implemented in the R function `\textrm{acf}'.
The   bootstrap procedure used in this section is called `the second-order wild bootstrap' and   originates from the dependent wild bootstrap
introduced by \cite{MR2656050}, also see \cite{https://doi.org/10.48550/arxiv.2110.13498}. The procedure is considered in \cite{Bootstrap_time_series} and it coincides with the version of the  dependent wild bootstrap introduced  in \cite{MR3796524} when the order of the autoregression fitted in the aforecited paper is set equal to zero. It directly resamples the products $X_iX_{i-j}$  of  the  time series, which makes it suitable to analyze second-order statistics like  sample autocovariances and  autocorrelations.

\begin{algorithm}[Second-Order Wild Bootstrap]
$\\$
\textbf{Input: } Observations $X_1,...,X_T$, the maximum lag $d$, the lag for calculating AR coefficients $p$ ($p\leq d$), the kernel function $K(\cdot):\mathbf{R}\to [0,\infty)$, a bandwidth $k_T$,
the nominal coverage probability $1 - \alpha$ and the number of bootstrap replicates $B$. We also need the index set $\mathcal{H}\subset\{0,1,...,d\}$ for
autocovariances and the index set $\mathcal{I}\subset\{1,2,...,d\}$ for autocorrelations.

\noindent\textbf{Additional input for hypothesis testing: } The autocovariances $\sigma^{(e)}_k, k\in\mathcal{H}$, the  autocorrelations $\rho^{(e)}_k, k\in\mathcal{I}$ and the expected AR coefficients $a^{(e)}_k, k = 1,2,...,p$.

1. Calculate the  sample autocovariances $\widehat{\sigma}_j, j = 0,1,...,d$ and the sample autocorrelations $\widehat{\rho}_j, j = 1,2,...,d$ as in \eqref{eq.def_sigma}. Then calculate the sample AR coefficients $\widehat{a}_j$ as in \eqref{eq.def_SAMPLEAR}. Define the `second-order residuals' $\widehat{\epsilon}_i^{(j)} = X_iX_{i - j} - \widehat{\sigma}_j$ for $j = 0,1,...,d$ and $i = j+1, j+2,...,T$.

2. Generate joint normal random variables $\varepsilon_1,...,\varepsilon_T$ such that $\mathbf{E}\varepsilon_j = 0$ and
$\mathbf{E}\varepsilon_{j_1}\varepsilon_{j_2} = K\left((j_1 - j_2)/k_T\right)$. Then calculate
\begin{equation}
\widehat{\sigma}^*_j =\widehat{\sigma}_j +  \frac{1}{T}\sum_{i = j+1}^T \widehat{\epsilon}_i^{(j)}\times \varepsilon_i\
\ \text{for } j = 0,1,...,d
\label{eq.bootSigma}
\end{equation}

3. Define $\widehat{\Sigma}^*$ and $\widehat{\gamma}^*$ as
\begin{equation}
\widehat{\Sigma}^* = \{\widehat{\sigma}^*_{\vert j - k\vert}\}_{j,k = 1,...,p}\ \text{and } \widehat{\gamma}^* = (\widehat{\sigma}^*_1,...,\widehat{\sigma}^*_p)^T
\label{eq.defBootCov}
\end{equation}
and calculate
\begin{equation}
\widehat{\rho}^*_j = \widehat{\sigma}^*_j/\widehat{\sigma}^*_0\ \text{for } j \in\mathcal{I},\ \widehat{a}^* = (\widehat{a}^*_1,...,\widehat{a}^*_{p})^T = \widehat{\Sigma}^{*\dagger} \widehat{\gamma}^*
\end{equation}
recall that $\dagger$ represents the Moore-Penrose pseudo inverse.  Then define
\begin{equation}
\begin{aligned}
\delta^*_{b,\sigma} = \sqrt{T}\max_{j\in\mathcal{H}}\vert \widehat{\sigma}^*_j - \widehat{\sigma}_j\vert,\ \delta^*_{b, \rho} = \sqrt{T}\max_{j\in\mathcal{I}}\vert\widehat{\rho}^*_j  - \widehat{\rho}_j\vert\\
\text{and } \delta^*_{b, a} = \sqrt{T}\max_{j = 1,...,p}\vert\widehat{a}^*_j - \widehat{a}_j\vert
\end{aligned}
\end{equation}

4. Repeat step 2 and  3 for $b  =  1,2,...,B$ times. Then calculate the $1 - \alpha $ sample quantiles $C^*_{1-\alpha,\sigma}$ of  the sequence $\{\delta^*_{b,\sigma}\}_{b = 1,...,B}$, $C^*_{1-\alpha, \rho} $ of the sequence $\{\delta^*_{b,\rho}\}_{b  = 1,...,B}$, and $C^*_{1-\alpha, a}$ of the sequence $\{\delta^*_{b, a}\}_{b = 1,...,B}$. Recall the definition of a sample quantile:
For a numerical sequence $a_b, b = 1,...,B$ such that $ a_1\leq a_2\leq...\leq a_B$,
\begin{equation}
C^*_{1-\alpha} = a_{b^*},\ \text{where  } b^* = \min\left\{b = 1,...,B: \  \frac{1}{B}\sum_{c = 1}^B \mathbf{1}_{\{a_c\leq a_b\}}\geq 1-\alpha\right\}.
\end{equation}

5a. \ For constructing confidence intervals: The $1-\alpha$ simultaneous confidence intervals for $\sigma_j, j\in\mathcal{H}$, $\rho_j, j\in\mathcal{I}$ and $a_j,
j = 1,2,...,p$ will be
\begin{equation}
\begin{aligned}
\sqrt{T}\max_{j\in\mathcal{H}}\vert \widehat{\sigma}_j - \sigma_j\vert\leq C^*_{1-\alpha, \sigma},\ \sqrt{T}\max_{j\in\mathcal{I}}\vert
\widehat{\rho}_j - \rho_j\vert\leq C^*_{1-\alpha,\rho}\\
\text{and }  \sqrt{T}\max_{j = 1,...,p}\vert \widehat{a}_j - a_j\vert\leq C^*_{1-\alpha, a}
\end{aligned}
\end{equation}

5b. \ For hypothesis testing: Reject the null hypothesis in eq.\eqref{eq.test_sigma} if $\sqrt{T}\max_{j\in\mathcal{H}}\vert \widehat{\sigma}_j - \sigma^{(e)}_j\vert> C^*_{1-\alpha, \sigma}$; reject the null hypothesis in eq.\eqref{eq.test_rho} if $\sqrt{T}\max_{j\in\mathcal{I}}\vert
\widehat{\rho}_j - \rho_j^{(e)}\vert >C^*_{1-\alpha,\rho}$; and reject the null hypothesis in eq.\eqref{eq.test_a} if $\sqrt{T}\max_{j = 1,...,p}\vert \widehat{a}_j - a_j^{(e)}\vert> C^*_{1-\alpha, a}$.
\label{algorithm1}
\end{algorithm}


For
$B\to\infty$ and from Glivenko - Cantelli's  Theorem and Theorem 1.2.1 in \cite{MR1707286}, the empirical cumulative distribution function of $\delta^*_{b,\tau}, \tau \in\{\sigma, \rho, a\}$ will
converge to $Prob^*\left(\delta^*_{b,\tau}\leq x\right)$ almost surely in the bootstrap world (i.e., the conditional probability space conditioning on $X_1,...,X_T$). The bootstrap confidence interval
is then consistent as long as
\begin{equation}
\sup_{x\in\mathbf{R}}\vert Prob^*(\delta^*_{b, \tau}\leq x) - H_\tau(x)\vert = o_p(1),
\end{equation}
where $\tau \in\{\sigma,\rho, a\}$; and $H_\sigma(x)$, $H_\rho(x)$, $H_a(x)$, respectively,  represent $Prob\left(\max_{j\in\mathcal{H}}\vert\xi_j\vert\leq x\right)$ in
\eqref{eq.auto_covariance_gaussian}, $Prob\left(\max_{j\in\mathcal{I}}\vert\zeta_j\vert\leq x\right)$ in \eqref{eq.rho_central} and
$ Prob\left(\max_{j = 1,...,p}\vert\xi_j\vert\leq x\right)$ in \eqref{eq.auto_coeff}. The following theorem justifies
validity of the proposed bootstrap algorithm.

\begin{theorem}
Suppose the kernel function $K(\cdot)$ and
the observations $X_1,...,X_T$ satisfy the conditions of  Lemma \ref{lemma.consistent_variance} and define $v_T$ as in Lemma \ref{lemma.covariance} {with $\alpha = \alpha_X$}.

(i) In addition suppose $d = O(T^{\beta_X})$ is a positive integer, the bandwidth $k_T$ is such that
 $v_T\times T^{7\alpha_X\beta_X} = o(1)$ and $k_T\times T^{\frac{8\beta_X}{m} + 7\alpha_X\beta_X - \frac{1}{2}} = o(1)$, and the set $\mathcal{H}\subset\{0,1,...,d\}$ is not empty. Suppose \eqref{eq.condH} and
\eqref{eq.minVARAUTO} hold true. Then
\begin{equation}
\begin{aligned}
\sup_{x\in\mathbf{R}}\Big| Prob^*(\sqrt{T}\max_{j\in\mathcal{H}}\vert \widehat{\sigma}^*_j - \widehat{\sigma}_j\vert\leq x) - H_\sigma(x)\Big| \\
= O_p\left(\left(v_T\times T^{7\alpha_X\beta_X}\right)^{1/6} + \left(k_T\times T^{\frac{8\beta_X}{m} + 7\alpha_X\beta_X - \frac{1}{2}}\right)^{1/6}\right)
\end{aligned}
\label{eq.Boot_delta_sigma}
\end{equation}

(ii) In addition suppose $d = O(T^{\beta_X})$ is a positive integer, the bandwidth $k_T$ satisfies $v_T\times T^{7\alpha_X\beta_X} = o(1)$ and $k_T\times T^{\frac{8\beta_X}{m} + 7\alpha_X\beta_X - \frac{1}{2}} = o(1)$  and the set $\mathcal{I}\subset\{1,...,d\}$ is not empty.
Suppose \eqref{eq.condH} and \eqref{eq.minVARCOEF} hold true, then
\begin{equation}
\sup_{x\in\mathbf{R}}\Big| Prob^*(\sqrt{T}\max_{j\in\mathcal{I}} \vert\widehat{\rho}^*_j - \widehat{\rho}_j\vert\leq x) - H_\rho(x)\Big| = o_p(1).
\label{eq.Boot_delta_rho}
\end{equation}

(iii) In addition suppose $p = O(1)$ is a positive number and the bandwidth $k_T$ satisfies $v_T = o(1)$ and $k_T \times T^{-1/2}  = o(1)$. Suppose
\eqref{eq.cond_AUTOCORR} hold true and assume that the smallest eigenvalue of the covariance matrix $\Sigma$ (see Theorem \ref{theorem.Gaussian_ARCOEF}) is greater than a constant $c > 0$. Then,
\begin{equation}
\sup_{x\in\mathbf{R}}\Big| Prob^*(\sqrt{T}\max_{j = 1,...,p}\vert \widehat{a}^*_j - \widehat{a}_j\vert\leq x) - H_a(x)\Big| = o_p(1).
\label{eq.Boot_delta_a}
\end{equation}
\label{thm.bootstrap}
\end{theorem}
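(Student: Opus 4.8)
The plan is to exploit a structural feature of the second-order wild bootstrap: conditional on the observations $X_1,\dots,X_T$, the resampled quantities are \emph{exactly} centered Gaussian, so the problem reduces to comparing two Gaussian laws rather than proving a separate bootstrap central limit theorem. In each of the three parts I would first write the centered bootstrap statistic as a linear functional of the Gaussian multipliers $\varepsilon_1,\dots,\varepsilon_T$ (plus, in parts (ii) and (iii), a nonlinear remainder), identify its conditional covariance with the kernel-smoothed estimator of Lemma~\ref{lemma.consistent_variance}, and then invoke a Gaussian comparison argument.

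For part (i), equation \eqref{eq.bootSigma} gives
\[
\sqrt{T}\big(\widehat{\sigma}^*_j - \widehat{\sigma}_j\big) = \frac{1}{\sqrt{T}}\sum_{i = j+1}^T \widehat{\epsilon}_i^{(j)}\,\varepsilon_i .
\]
Since the $\varepsilon_i$ are jointly normal and the residuals $\widehat{\epsilon}_i^{(j)}$ are data-measurable, the vector $\big(\sqrt{T}(\widehat{\sigma}^*_j-\widehat{\sigma}_j)\big)_{j\in\mathcal{H}}$ is conditionally centered Gaussian with covariance
\[
\frac{1}{T}\sum_{i_1=j_1+1}^T\sum_{i_2=j_2+1}^T K\!\left(\frac{i_1-i_2}{k_T}\right)\widehat{\epsilon}_{i_1}^{(j_1)}\widehat{\epsilon}_{i_2}^{(j_2)},
\]
which is exactly the estimator controlled in Lemma~\ref{lemma.consistent_variance}(i). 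That lemma places this conditional covariance matrix uniformly within $O_p\big(v_T T^{\alpha_X\beta_X} + k_T T^{8\beta_X/m + \alpha_X\beta_X - 1/2}\big)$, entrywise, of the covariance of the target vector $(\xi_j)_{j\in\mathcal{H}}$ appearing in \eqref{eq.auto_covariance_gaussian}. It then remains to bound the difference between the distributions of the maxima of two centered Gaussian vectors whose covariance matrices are close in the entrywise sense. For this I would use a Gaussian comparison inequality together with the corresponding anti-concentration bound for the maximum, which yields $\sup_x|Prob^*(\cdot\le x)-H_\sigma(x)|$ proportional to a fractional power of the entrywise covariance error and inflated by logarithmic factors in $|\mathcal{H}|=O(T^{\beta_X})$. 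Condition \eqref{eq.minVARAUTO} keeps the diagonal entries bounded away from zero so the comparison constant stays finite; carrying the $T^{\alpha_X\beta_X}$ factors (which originate from the medium-range dependence of the products, cf.\ Lemma~\ref{lemma.recognize}) through this step produces the exponent $7\alpha_X\beta_X$ and the power $1/6$ stated in \eqref{eq.Boot_delta_sigma}.

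Parts (ii) and (iii) follow the same template after an additional linearization, since the statistics are now nonlinear in the $\widehat{\sigma}^*_j$. For the autocorrelations I would expand $\widehat{\rho}^*_j=\widehat{\sigma}^*_j/\widehat{\sigma}^*_0$ around $(\widehat{\sigma}_j,\widehat{\sigma}_0)$; the leading term equals $T^{-1/2}\sum_i \widehat{Z}_{i,j}\varepsilon_i$ with $\widehat{Z}_{i,j}$ as in \eqref{eq.def_autocorr}, hence conditionally Gaussian with covariance matched to that of $(\zeta_j)_{j\in\mathcal{I}}$ by Lemma~\ref{lemma.consistent_variance}(ii), while the quadratic remainder is shown to be negligible in the bootstrap world uniformly over $\mathcal{I}$ using $\widehat{\sigma}_0$ being bounded away from zero together with the moment bound of Lemma~\ref{lemma.linear_comb}. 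For the AR coefficients the derivative of $\Sigma^{-1}\gamma$ in the autocovariance entries is precisely encoded in the matrix $\widehat{B}$, so the leading term of $\widehat{a}^*=\widehat{\Sigma}^{*\dagger}\widehat{\gamma}^*$ is again $T^{-1/2}\sum_i \widehat{Z}_{i,j}\varepsilon_i$ with $\widehat{Z}_{i,j}$ as in \eqref{eq.defZHAT}, whose covariance is matched to the target by Lemma~\ref{lemma.consistent_variance}(iii). After discarding the remainders one applies the same Gaussian comparison as in part (i) to obtain the $o_p(1)$ conclusions.

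The main obstacle is the Gaussian comparison step under a diverging dimension: because $d=O(T^{\beta_X})$ the number of coordinates grows, and one must control the anti-concentration of the maximum uniformly while keeping the covariance matrices non-degenerate --- which is exactly why conditions \eqref{eq.minVARAUTO}, \eqref{eq.minVARCOEF} and the eigenvalue lower bound in (iii) are imposed, and what forces the delicate rate bookkeeping behind the exponent $7\alpha_X\beta_X$ in part (i). A secondary difficulty, specific to parts (ii) and (iii), is verifying that the linearization remainder is negligible \emph{conditionally} on the data with probability tending to one, which requires moment bounds for the resampled quantities in the bootstrap world rather than for the original process.
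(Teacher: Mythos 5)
Your proposal is correct and follows essentially the same route as the paper's own proof: exact conditional Gaussianity of the multiplier statistics, identification of the conditional covariance with the kernel estimator of Lemma~\ref{lemma.consistent_variance}, the Gaussian comparison and anti-concentration bounds of Lemma~\ref{lemma.normal_property} (where the factor $L = O(T^{\alpha_X\beta_X})$ combined with $\Delta^{1/6}$ indeed yields the exponent $7\alpha_X\beta_X$), and, for parts (ii) and (iii), linearization of $\widehat{\rho}^*_j$ and $\widehat{a}^*$ with remainders controlled via bootstrap-world moment bounds exactly as in \eqref{eq.change_autocorrelation} and \eqref{eq.aroundA}. Your identification of the leading terms with $\widehat{Z}_{i,j}\varepsilon_i$ from \eqref{eq.def_autocorr} and \eqref{eq.defZHAT} matches the paper's $\widehat{Z}^*_{i,j}$ constructions, so no substantive step is missing.
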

Similar to Theorem \ref{theorem.auto_covariance}, if $\beta_X\neq 0$, the conditions in Theorem \ref{thm.bootstrap} imply  that the maximum lag $d$ cannot be too large compared to the sample size $T$.

However, since  linear approximations are used to derive the asymptotic distribution of  sample autocorrelations and  sample AR coefficients; see  \eqref{eq.change_autocorrelation},  and \eqref{eq.prob_SEC2},  and \eqref{eq.aroundA} in the supplementary material, the
exact convergence rates of eq.\eqref{eq.Boot_delta_rho} and eq.\eqref{eq.Boot_delta_a} are difficult  to derive.

\section{Numerical Results}
\label{section.numerical_experiment}
This section performs numerical experiments to demonstrate the finite sample performance of the proposed  bootstrap (Algorithm \ref{algorithm1}). We first
investigate the performance of  the bootstrap using simulated data. Then we  use a real-life data set to demonstrate  the differences between inference based on Algorithm \ref{algorithm1} and on the
AR-sieve  bootstrap. Recall  that the AR-sieve  bootstrap leads to consistent confidence intervals only in specific situations;
we refer to    \cite{MR2893863} for details.

\textbf{Selecting the bandwidth $k_T$: } Implementation of   Bootstrap Algorithm 1 requires the selection of the bandwidth parameter $k_T$.  \cite{MR2041534} introduced
an automatic bandwidth selection algorithm which also has been used in  \cite{MR2656050} for  selecting the bandwidth of  the dependent wild bootstrap. We refer to the
R package `np'(see \cite{JSSv027i05}) for the implementation of this algorithm. Other methods include \cite{MR3511584} (also see \cite{MR3796524}) and \cite{MR2748557}. In this section we will use the  procedure proposed by  \cite{MR2041534}.

\textbf{Model selection: } We select the order $p$ of the  AR models based on AIC (as incorporated in the `ar' function in R).

\textbf{Simulated data: } Let  $\{e_i\}_{i\in\mathbf{Z}}$ be  i.i.d. normal random variables with $\mathbf{E}e_i = 0$ and $\mathbf{E}e^2_i = 1$.
We consider three kinds of white noise innovations, i.e., \textit{independent: } $\epsilon_i = e_i$;
\textit{product  of normals: } $\epsilon_i = e_ie_{i -  1}$; \textit{non-stationary: } $\epsilon_i = e_i$ for $i = 2k$ and
$e_{i}e_{i - 1}$ for $i =  2k - 1$, here $k\in\mathbf{Z}$. Observe that all innovations satisfy $\mathbf{E}\epsilon_i = 0,\ \mathbf{E}\epsilon^2_i = 1$ and
$\mathbf{E}\epsilon_i\epsilon_j = 0$ for $i\neq j$. However, the \textit{product normal} innovations are dependent and the last defined
innovations are not stationary. We consider the following time series models:

\textit{AR(1):  } $X_t = 0.9X_{t - 1} + \epsilon_t$

\textit{AR(2): } $X_t = 0.5X_{t - 1} + 0.2X_{t - 2} + \epsilon_t$

\textit{AR(4): } $X_t = 0.3X_{t - 1} + 0.2X_{t - 2} + 0.2X_{t - 3} + 0.1X_{t - 4} + \epsilon_t$

\textit{MA(3): } $X_t = \epsilon_t + 0.6\epsilon_{t - 1} + 0.4\epsilon_{t - 2} + 0.1 \epsilon_{t - 3}$

\textit{Nonlinear AR(2): } $X_t = \sin(X_{t-1}) + \cos(X_{t - 2})+\epsilon_{t}$

The results obtained  are shown  in Figure~\ref{figure_1}, Figure~\ref{fig.examp2} and Table~\ref{table.estimator}. To save computing time, we verify the performance of the bootstrap algorithms based on the warp-speed method, see \cite{MR3064050}. If the innovations in the time series are indeed i.i.d., the (point-wise or simultaneous) confidence intervals obtained  by  Algorithm ~\ref{algorithm1} and the AR-sieve bootstrap, are very similar. However, when the innovations are dependent, bootstrap Algorithm \ref{algorithm1} tends to generate wider confidence intervals. Table~\ref{table.estimator} records the coverage probability of the second-order wild bootstrap algorithm. We compare the proposed method to the AR-sieve  bootstrap. The second-order wild bootstrap algorithm has desired coverage probability for the autocovariances, the autocorrelations, and the AR coefficients, even if the time series at hand has non-i.i.d. innovations or does not stem from  an autoregressive process. On the contrary,  the AR-sieve bootstrap performs well for AR coefficients when the innovations are independent. However, if the innovations are dependent, then the AR-sieve bootstrap may fail to achieve  the correct coverage probability.

\begin{figure}[htbp]
    \subfigure[AR(4) with \textit{independent} innovations]{
    \includegraphics[width = 1.7in]{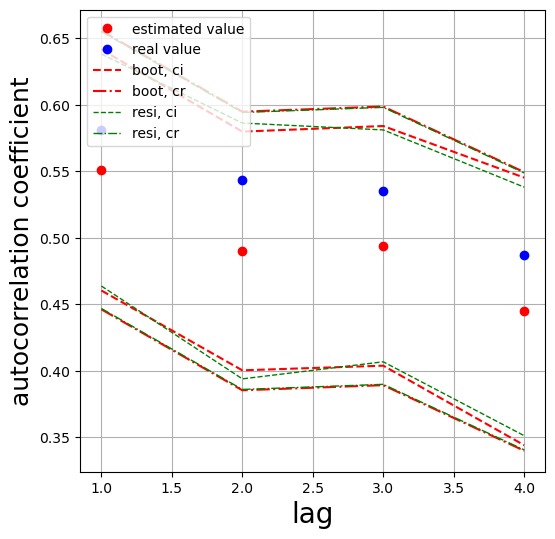}
  }
    \subfigure[AR(4) with \textit{product normal} innovations]{
    \includegraphics[width = 1.7in]{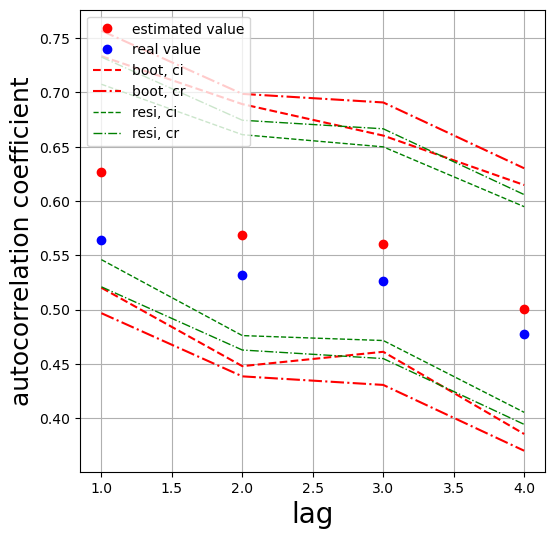}
  }
    \subfigure[AR(4) with \textit{non-stationary} innovations]{
    \includegraphics[width = 1.7in]{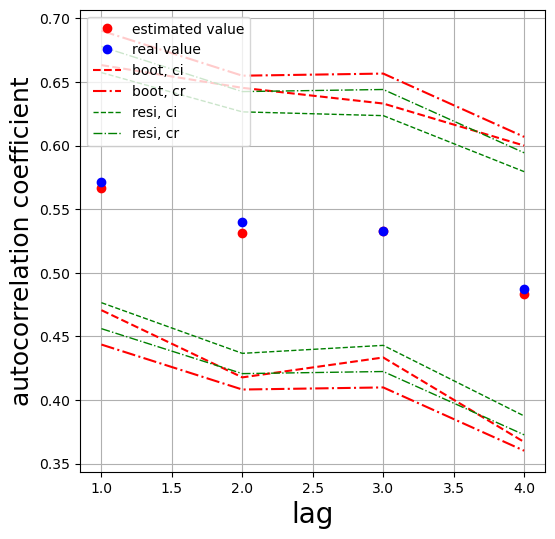}
  }
  \caption{Point-wise and simultaneous confidence interval for the autocorrelation coefficients(ACF) of the AR(4) model with different types of innovations. The blue and  red dots represent the real  and the estimated values, respectively. In the legend, `boot' represent Bootstrap Algorithm~\ref{algorithm1} and `resi' represent
  the AR-sieve bootstrap.  Furthermore, `ci' means the point-wise confidence intervals and `cr'  simultaneous confidence intervals.
  }
    \label{figure_1}
\end{figure}

\begin{figure}[htbp]
    \subfigure[AR(4) with \textit{independent} innovations]{
    \includegraphics[width = 1.7in]{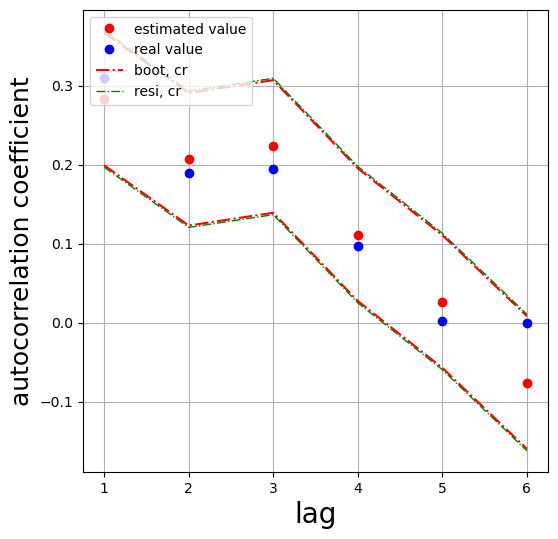}
  }
    \subfigure[AR(4) with \textit{product normal} innovations]{
    \includegraphics[width = 1.7in]{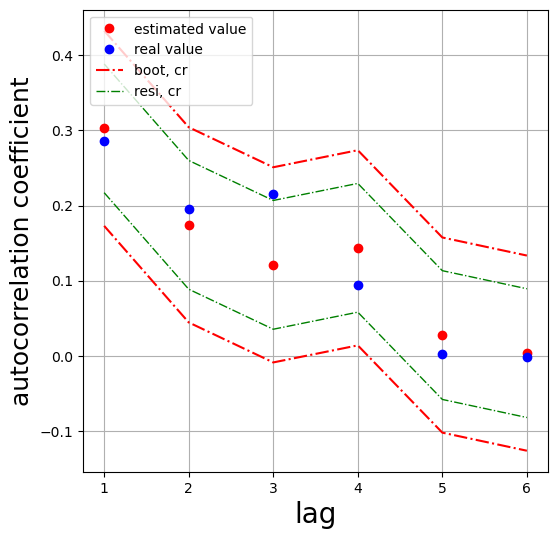}
  }
    \subfigure[AR(4) with \textit{non-stationary} innovations]{
    \includegraphics[width = 1.7in]{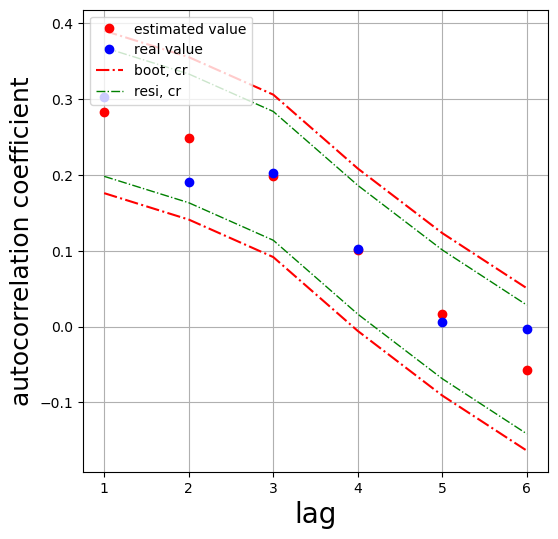}
  }
  \caption{Simultaneous confidence intervals for the AR coefficients of the AR(4) model with different types of innovations. The blue  and  red dots represent the real values and the estimated values, respectively while the   legend coincides with that of Figure~\ref{figure_1}. We select the max lag to be equal to $6$.
  }
    \label{fig.examp2}
\end{figure}

\begin{table}[htbp]
\centering
\scriptsize
\caption{Performance of the Bootstrap Algorithm~\ref{algorithm1} and of the AR bootstrap. The order of the model (i.e., $p$ in the Bootstrap Algorithm~\ref{algorithm1}) is selected by the
AIC criterion (as implemented in the `ar' function in R). We choose $\mathcal{H} = \{0,1,2,3\},\ \mathcal{I} = \{1,2,3,4\}$ and $d = 7$.  The sample size is $T = 1000$ and the nominal coverage probability is  set equal to $95\%$.
}
\label{table.estimator}
\begin{tabular}{l l l l l l l}
\hline\hline
Model & innovation                    & order   &   $k_T$   &  statistics & boot coverage & AR coverage\\
AR(1) & \textit{independent}    & 1         & $32.9$    &  autocovariance & $92.8\%$ & $100\%$\\
         &                                      &           &                & autocorrelation  & $90.3\%$ & $93.3\%$\\
         &                                      &           &                & AR coefficients   & $93.8\%$ & $94.3\%$\\
         & \textit{product of normals} & 1        & 30.9        & autocovariance  & $95.7\%$ & $100\%$\\
         &                                      &           &                 & autocorrelation  & $95.2\%$ & $95.6\%$\\
         &                                      &           &                 & AR coefficients   & $95.0\%$ & $76.7\%$\\
         & \textit{Non-stationary}  &  1       & 35.7         & autocovariance &  $95.4\%$ & $93.3\%$\\
         &                                       &          &                 & autocorrelation  & $94.9\%$ & $97.2\%$\\
         &                                       &         &                  & AR coefficients   & $92.7\%$ & $91.3\%$\\
AR(2) & \textit{independent}    & 2         &   $34.8$             & autocovariance & $93.9\%$ & $100\%$\\
          &                                    &            &                & autocorrelation & $93.7\%$ & $94.6\%$\\
          &                                    &            &                & AR coefficients  & $96.2\%$ & $95.2\%$\\
          & \textit{product of normals} & 2       &  $22.7$   & autocovariance & $93.8\%$ & $99.2\%$\\
          &                                      &          &                & autocoefficients & $95.4\%$ & $96.3\%$\\
          &                                      &          &                & AR coefficients   & $93.2\%$ & $76.4\%$\\
          & \textit{Non-stationary} & 2        & $17.4$    & autocovariance  & $94.9\%$ & $99.4\%$\\
          &                                      &           &                & autocoefficients & $97.5\%$ & $97.6\%$\\
          &                                     &            &               &  AR coefficients   & $94.1\%$ & $82.5\%$\\
AR(4) & \textit{independent}     & 4         & $35.0$  & autocovariance    & $93.1\%$ & $98.7\%$\\
          &                                     &            &               & autocorrelation   & $93.8\%$ & $95.3\%$\\
          &                                     &            &               & AR coefficients    & $95.6\%$ & $95.0\%$\\
          & \textit{product of normals} &  6       & $43.3$   & autocovariance   & $95.9\%$ & $100\%$\\
          &                                      &           &               & autocorrelation   & $96.0\%$ & $88.9\%$\\
          &                                      &           &               & AR coefficients    & $95.2\%$ & $78.2\%$\\
          &  \textit{Non-stationary} & 4        & $40.1$  & autocovariance   & $95.0\%$ & $97.2\%$\\
          &                                      &             &            & autocorrelation    & $95.4\%$ & $92.0\%$\\
          &                                      &             &             & AR coefficients    & $93.8\%$ & $89.8\%$\\
MA(3) & \textit{independent}      & 3          & 10.8    & autocovariance   & $94.8\%$ & $100\%$\\
           &                                      &             &            & autocorrelation   & $93.4\%$ & $97.7\%$\\
           &                                      &             &             & AR coefficients   & $94.0\%$ & $94.3\%$ \\
           & \textit{product of normals } & 3         & 8.7       & autocovariance &  $95.0\%$  & $100\%$\\
           &                                       &            &             & autocorrelation & $94.9\%$  & $97.2\%$\\
           &                                       &            &              & AR coefficients  & $94.9\%$ & $83.8\%$\\
           &  \textit{Non-stationary} & 5        & 8.7        & autocovariance & $93.7\%$ & $100\%$\\
           &                                        &          &               & autocorrelations & $92.7\%$ & $93.7\%$\\
           &                                        &          &                & AR coefficients   & $95.0\%$ & $88.6\%$\\
nonlinear & \textit{independent}    & 2        & 8.0         & autocovariance  & $93.4\%$ & $100\%$\\
              &                                     &          &                & autocorrelation  & $94.3\%$ & $100\%$\\
              &                                     &         &                 & AR coefficients   & $95.7\%$ & $96.9\%$\\
              & \textit{product of normals} & 2     & 4.1           & autocovariance  & $94.9\%$ & $100\%$\\
              &                                       &       &                 & autocorrelation  & $93.1\%$ & $100\%$\\
              &                                       &      &                  & AR coefficients   & $96.0\%$ & $78.9\%$\\
              & \textit{Non-stationary}  & 2   &  5.4           & autocovariance  &  $96.0\%$ & $100\%$\\
              &                                      &       &                  & autocorrelation  & $93.5\%$ &  $ 100\%$\\
              &                                      &       &                  & AR coefficients   & $92.8\%$  & $86.9\%$\\
\hline\hline
\end{tabular}
\end{table}

\section{Conclusions}
\label{section.conclusion}
Focusing on a weakly (but not necessarily strictly) stationary time series, this paper derives Gaussian approximation results for  sample autocovariances,
 sample autocorrelations and sample AR coefficients, the latter when an autoregressive process is fitted to a general statonary time series.
A consistent
bootstrap algorithm, called `the second-order wild bootstrap', is used to perform   statistical inference on  the corresponding population parameters.
While  strictly stationary assumptions form  a necessary set of assumptions in the time series literature, our work  weakness considerable these assumptions  in order to perform the  type of statistical inference considered in this paper.
\clearpage
\section*{Acknowledgement}
The authors appreciate Dr. Kejin Wu for the valuable suggestions and comments. The research is partially supported by the Chinese University of Hong Kong (Shenzhen) University development fund -research start-up fund, UDF01002774.
\clearpage
\printbibliography 
\appendix
\numberwithin{equation}{section}
\numberwithin{lemma}{section}

\section{Appendix: Preliminary Results}
This section introduces some special functions that are helpful in the following proofs, also see \cite{ZHANG}.  For any $\tau, \psi > 0$, $z\in\mathbf{R}$, define $F_\tau(x_1,...,x_s) = \frac{1}{\tau}\log(\sum_{i = 1}^s \exp(\tau x_s))$;
\begin{equation}
G_\tau(x_1,...,x_s) =\frac{1}{\tau}\log\left(\sum_{i = 1}^s \exp(\tau x_i) + \sum_{i = 1}^s \exp(-\tau x_i)\right)\\
= F_\tau(x_1,...,x_s, -x_1,...,-x_s)
\end{equation}
Define $g_0(x) = (1 - \min(1, \max(x, 0))^4)^4$ and $g_{\psi, z}(x)  = g_0(\psi(x - z))$. Then define $h_{\tau, \psi, z}(x_1,...,x_n) = g_{\psi, z}(G_\tau(x_1,...,x_n))$. From lemma A.2 and (8) in \cite{MR3161448} and (S1) to (S5) in \cite{MR3992401}, $g_* = \sup_{x\in\mathbf{R}}(\vert g^{\prime}_0(x)\vert + \vert g^{\prime\prime}_0(x)\vert + \vert g^{\prime\prime\prime}_0(x)\vert) < \infty$;  $\mathbf{1}_{x\leq z}\leq g_{\psi, z}(x)\leq \mathbf{1}_{x\leq z + 1/\psi}$; $\sup_{x,z\in\mathbf{R}}\vert g^{\prime}_{\psi, z}(x)\vert\leq g_*\psi$, $\sup_{x,z\in\mathbf{R}}\vert g^{\prime\prime}_{\psi,z}(x)\vert\leq g_*\psi^2$ and $\sup_{x,z\in\mathbf{R}}\vert g^{\prime\prime\prime}_{\psi, z}(x)\vert\leq g_*\psi^3$. Define the operator $\partial_i f = \frac{\partial f}{\partial x_i}$. Then $\partial_i F_\tau\geq 0$;
$\sum_{i = 1}^s \partial_i F_\tau = 1$; $\sum_{i = 1}^s\sum_{j = 1}^s\vert\partial_i\partial_j F_\tau\vert\leq 2 \tau$;

\noindent $\sum_{i = 1}^s\sum_{j = 1}^s\sum_{k = 1}^s\vert\partial_i\partial_j\partial_k F_\tau\vert\leq 6\tau^2$. Moreover,
\begin{equation}
\begin{aligned}
F_\tau(x_1,...,x_s) - \frac{\log(s)}{\tau}\leq \max_{i = 1,...,s}x_i\leq F_\tau(x_1,...,x_s)\\
\Rightarrow G_\tau(x_1,...,x_s) - \frac{\log(2s)}{\tau}\leq \max_{i = 1,...,s}\vert x_i\vert\leq G_\tau(x_1,...,x_s)
\end{aligned}
\end{equation}
Since $\partial_i G_\tau = \partial_i F_\tau - \partial_{s + i}F_\tau$, we get $\sum_{i = 1}^s \vert\partial_i G_\tau\vert\leq 1$. For $\partial_i\partial_j G_\tau = \partial_i \partial_j F_\tau - \partial_i\partial_{j + s}F_\tau - \partial_{i + s}\partial_j F_\tau + \partial_{i+s}\partial_{j + s}F_\tau$, we have $\sum_{i = 1}^s\sum_{j = 1}^s\vert\partial_i\partial_j G_\tau\vert\leq 2\tau$. Since $\partial_i\partial_j\partial_k G_\tau = \partial_i\partial_j\partial_k F_\tau - \partial_{i + s}\partial_j\partial_k F_\tau - \partial_i\partial_{j + s}\partial_k F_\tau - \partial_i\partial_j\partial_{k + s} F_\tau + \partial_{i + s}\partial_{j + s}\partial_k F_\tau + \partial_i\partial_{j + s}\partial_{k + s} F_\tau + \partial_{i + s}\partial_j\partial_{k + s} F_\tau - \partial_{i + s}\partial_{j + s}\partial_{k + s} F_\tau$, $\sum_{i = 1}^s\sum_{j = 1}^s\sum_{k = 1}^s\vert\partial_i\partial_j\partial_k G_\tau\vert\leq 6\tau^2$. For $\partial_i h_{\tau, \psi, z} = g^{\prime}_{\psi, z}(G_\tau(x_1,...,x_s))\times \partial_i G_\tau$, we get $\sum_{i = 1}^s \vert\partial_i h_{\tau, \psi, z} \vert\leq g_*\psi$. Moreover,
\begin{equation}
\begin{aligned}
\partial_i\partial_j h_{\tau,\psi,z} = g^{\prime\prime}_{\psi, z}(G_\tau(x_1,...,x_s))\times \partial_i G_\tau\partial_j G_\tau + g^{\prime}_{\psi, z}(G_\tau(x_1,...,x_s))\times \partial_i \partial_j G_\tau\\
\Rightarrow \sum_{i = 1}^s\sum_{j = 1}^s\vert \partial_i\partial_j h_{\tau,\psi,z}\vert\leq g_*\psi^2 + 2g_*\psi\tau\\
\partial_i\partial_j\partial_k h_{\tau, \psi, z} =  g^{\prime\prime\prime}_{\psi, z}(G_\tau(x_1,...,x_s))\times \partial_i G_\tau\partial_j G_\tau\partial_k G_\tau\\
+ g^{\prime\prime}_{\psi, z}(G_\tau(x_1,...,x_s))\times\left(\partial_i\partial_j G_\tau\times \partial_k G_\tau + \partial_i\partial_k G_\tau \times\partial_j G_\tau + \partial_j\partial_k G_\tau\times \partial_i G_\tau\right)\\
+ g^{\prime}(G_\tau(x_1,...,x_s))\times \partial_i \partial_j \partial_k G_\tau\\
\Rightarrow \sum_{i = 1}^s\sum_{j = 1}^s\sum_{k = 1}^s\vert\partial_i\partial_j\partial_k h_{\tau, \psi, z}\vert\leq g_*\psi^3 + 6g_*\tau\psi^2 + 6g_*\psi\tau^2
\end{aligned}
\end{equation}

Then we derive a lemma which is a corollary of \cite{MR3350040}. It
introduces some properties of joint Gaussian random variables.

\begin{lemma}
(i). Suppose $\xi_1,...,\xi_s$ are $s$ joint normal random variables with $\mathbf{E}\xi_i = 0$. Besides, suppose $\exists$ a positive real number(may not be a constant) $L$ and a
constant $c > 0$ such that $L>1$ and $c\leq \Vert\xi_i\Vert_2\leq L$ for $i = 1,...,s$. Then there exists a constant $C$ that is independent of $s,L$; and
\begin{equation}
\begin{aligned}
\sup_{x\in\mathbf{R}}\vert Prob\left(\max_{i = 1,...,s}\vert \xi_i\vert\leq x + \delta\right) - Prob\left(\max_{i = 1,...,s}\vert\xi_i\vert\leq x\right)\vert\\
\leq CL\delta\times(1 + \sqrt{\log(s)} + \sqrt{\vert\log(\delta)\vert} + \sqrt{\log(L)})
\label{eq.firGaussian}
\end{aligned}
\end{equation}

(ii). Define $\Sigma = \{\sigma_{ij}\}_{i, j = 1,...,s}$ such that $\sigma_{ij} = \mathbf{E}\xi_i\xi_j$. Suppose $\xi_i^\dagger, i = 1,...,s$ are joint normal random
variables with $\mathbf{E}\xi^\dagger_i = 0$. Define $\Sigma^\dagger  = \{\sigma_{ij}^\dagger\}_{i,j  = 1,...,s}$ such that
$\sigma^\dagger_{ij} = \mathbf{E}\xi_i^\dagger\xi_j^\dagger$ and $\Delta = \max_{i,j = 1,...,s}\vert\sigma_{ij} - \sigma_{ij}^\dagger\vert$. If $\Delta < 1$, then
\begin{equation}
\begin{aligned}
\sup_{x\in\mathbf{R}}\vert Prob\left(\max_{i = 1,...,s}\vert\xi_i\vert\leq x\right) - Prob\left(\max_{i = 1,...,s}\vert\xi^\dagger_i\vert\leq x\right)\vert\\
\leq \frac{CL\Delta^{1/6}}{(1 + \log(s))^{1/4}} + C\Delta^{1/3}(L + \log^3(s))+ \frac{CL\sqrt{\log(L)}\Delta^{1/3}}{(1 + \log(s))^{1/2}}
\label{eq.normal_2}
\end{aligned}
\end{equation}
\label{lemma.normal_property}
\end{lemma}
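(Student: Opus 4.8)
Both parts are consequences of the Gaussian comparison and anti-concentration machinery of \cite{MR3350040}, but the point here is that the variance bound $L$ is allowed to grow with the sample size (it ``may not be a constant''), whereas the constant $c$ is a genuine constant. Hence I cannot simply quote the theorems of \cite{MR3350040}, in which the variance dependence is hidden inside the generic constant; instead I would reprove the two estimates while tracking the dependence on $L$ explicitly, absorbing all $1/c$-type factors into $C$. I would prove part (i) first (anti-concentration with explicit $L$) and then feed it into part (ii) as the device that converts smoothed expectations back into values of the distribution function.

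\textbf{Part (i).} The key reduction is $\max_{i=1,\dots,s}|\xi_i| = \max_{1\le k\le 2s}\eta_k$, where $\eta=(\xi_1,\dots,\xi_s,-\xi_1,\dots,-\xi_s)$ is a centered jointly Gaussian vector whose coordinate variances $\|\xi_i\|_2^2$ all lie in $[c^2,L^2]$. I would then apply the Gaussian anti-concentration (Nazarov-type) inequality of \cite{MR3350040} to $\eta$. Writing the bound in the form $C\delta$ times a factor governed by $\mathbf{E}\max_{k}\eta_k$ and the variance bounds, the estimate $\mathbf{E}\max_{k}\eta_k\le L\sqrt{2\log(2s)}$ is what produces the leading factor $L\sqrt{\log s}$; the lower bound $c$ enters only through the constant $C$. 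The remaining summands $\sqrt{|\log\delta|}$ and $\sqrt{\log L}$ come out of the sharper, tail-refined version of the inequality (the relevant scale being $\delta$ against the standard deviations), and I would simply keep those terms through the computation. This yields the claimed bound \eqref{eq.firGaussian}.

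\textbf{Part (ii).} Here I would run the Slepian/Stein interpolation using the smooth surrogate $h_{\tau,\psi,z}$ introduced just before the lemma, which satisfies $\mathbf{1}_{x\le z}\le g_{\psi,z}(x)\le \mathbf{1}_{x\le z+1/\psi}$ and $G_\tau$ approximates $\max_i|x_i|$ up to $\log(2s)/\tau$, so that $h_{\tau,\psi,z}$ sandwiches $\mathbf{1}_{\max_i|x_i|\le z}$ with threshold shifts of size $1/\psi$ and $\log(2s)/\tau$. Constructing $\xi,\xi^\dagger$ as independent and setting $Z(t)=\sqrt{t}\,\xi+\sqrt{1-t}\,\xi^\dagger$, the Gaussian interpolation identity gives $\mathbf{E}h(\xi)-\mathbf{E}h(\xi^\dagger)=\tfrac12\int_0^1\sum_{i,j}(\sigma_{ij}-\sigma^\dagger_{ij})\,\mathbf{E}[\partial_i\partial_j h(Z(t))]\,dt$, so with the appendix bound $\sum_{i,j}|\partial_i\partial_j h_{\tau,\psi,z}|\le g_*\psi^2+2g_*\psi\tau$ one obtains $|\mathbf{E}h(\xi)-\mathbf{E}h(\xi^\dagger)|\le \tfrac12\Delta(g_*\psi^2+2g_*\psi\tau)$. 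I would then pass from $\mathbf{E}h$ back to $Prob(\max_i|\xi_i|\le x)$ and $Prob(\max_i|\xi^\dagger_i|\le x)$ via the sandwich, bounding the errors introduced by the shifts $1/\psi$ and $\log(2s)/\tau$ by part (i) applied to both $\xi$ and $\xi^\dagger$; note that $\Delta<1$ together with $c\le\|\xi_i\|_2\le L$ keeps the variances of $\xi^\dagger$ within a comparable band, so part (i) is legitimately applicable to $\xi^\dagger$ as well.

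\textbf{The main obstacle.} The delicate step is the final optimization over the smoothing parameters $\tau$ and $\psi$: one must balance the interpolation error $C\Delta(\psi^2+\psi\tau)$ against the anti-concentration error incurred over a window of width $1/\psi+\log(2s)/\tau$, which by part (i) carries an explicit factor $L$ and the logarithmic factors in $s$ and $L$. Because the objective is to retain $L$ \emph{explicitly} rather than to obtain the sharpest power of $\Delta$, the convenient choice of $\tau,\psi$ (as suitable powers of $\Delta$, $1+\log s$ and $L$) produces a bound that is weaker in $\Delta$ than the $\Delta^{1/3}$ of \cite{MR3350040} but has transparent $L$-dependence; tracking the bookkeeping carefully is what lands one exactly on the three stated terms $CL\Delta^{1/6}(1+\log s)^{-1/4}$, $C\Delta^{1/3}(L+\log^3 s)$ and $CL\sqrt{\log L}\,\Delta^{1/3}(1+\log s)^{-1/2}$. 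This parameter tuning, together with keeping every $L$-power and logarithmic factor in place, is the only genuinely nontrivial part of the argument; everything else is a routine assembly of the interpolation identity and part (i).
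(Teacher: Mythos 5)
Your proposal follows essentially the same route as the paper's proof: part (i) is obtained from the anti-concentration bounds of \cite{MR3350040} (Theorem 3 with its refinements (18)--(19)) after reducing the two-sided maximum to one-sided maxima, keeping $\underline{\sigma}\geq c$ in the constant and $\overline{\sigma}\leq L$ explicit; part (ii) is the Stein--Slepian interpolation $Z_i(t)=\sqrt{t}\,\xi_i+\sqrt{1-t}\,\xi_i^\dagger$ applied to the smooth surrogate $h_{\tau,\psi,z}$, giving $\sup_x\vert \mathbf{E}h_{\tau,\psi,x}(\xi)-\mathbf{E}h_{\tau,\psi,x}(\xi^\dagger)\vert\leq g_*\Delta(\psi^2+\psi\tau)$, followed by de-smoothing via part (i) and the choice $\tau=\psi=(1+\log 2+\log s)^{3/2}\Delta^{-1/3}$, which is exactly how the paper lands on the three stated terms. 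One point needs repair: you invoke part (i) for $\xi^\dagger$ as well, justifying it by ``$\Delta<1$ keeps the variances of $\xi^\dagger$ in a comparable band,'' but $\sigma^\dagger_{ii}\geq \sigma_{ii}-\Delta\geq c^2-\Delta$ can be arbitrarily small (even zero) since $c$ need not exceed $1$, so the lower-variance hypothesis of (i) may fail for $\xi^\dagger$. This is fixable either by noting that when $\Delta> c^2/2$ the right-hand side of \eqref{eq.normal_2} exceeds $1$ for $C$ large enough (so one may assume $\Delta\leq c^2/2$, whence $\sigma^\dagger_{ii}\geq c^2/2$), or, more cleanly, by arranging the sandwich as in \eqref{eq.prob_to_h} so that the shift-by-$t$ anti-concentration step is charged only to $\xi$: the indicator bounds $\mathbf{1}_{x\leq z}\leq g_{\psi,z}(x)\leq \mathbf{1}_{x\leq z+1/\psi}$ and the $\log(2s)/\tau$ slack of $G_\tau$ absorb the $\xi^\dagger$ side without any anti-concentration for $\xi^\dagger$, which is what the paper does.
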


\begin{proof}[Proof of Lemma \ref{lemma.normal_property}]
For $\max_{i = 1,...,s}\vert\xi_i\vert = \max(\max_{i = 1,...,s}\xi_i, \max_{i = 1,...,s}(-\xi_i))$ and $(-\xi_1,...,-\xi_{p_1})$ has the same joint distribution as
$(\xi_1,...,\xi_{p_1})$,
\begin{equation}
\begin{aligned}
\sup_{x\in\mathbf{R}}Prob\left(x<\max_{i = 1,...,s}\vert\xi_i\vert\leq x + \delta\right)\leq \sup_{x\in\mathbf{R}} Prob\left(x < \max_{i = 1,...,s}\xi_i\leq x + \delta\right)\\
+ \sup_{x\in\mathbf{R}} Prob\left(x < \max_{i = 1,...,s}(-\xi_i)\leq x + \delta\right)\\
\leq  2\sup_{x\in\mathbf{R}} Prob\left(\vert\max_{i = 1,...,s}\xi_i - x\vert\leq \delta\right)
\end{aligned}
\end{equation}
From theorem 3 and (18), (19) in \cite{MR3350040}, define $\underline{\sigma}  = \min_{i = 1,...,s}\Vert\xi_i\Vert_2$
and $\overline{\sigma} = \max_{i = 1,...,s}\Vert\xi_i\Vert_2$,
\begin{equation}
\begin{aligned}
Prob\left(\vert\max_{i = 1,...,s}\xi_i - x\vert\leq \delta\right)\leq \frac{4\delta(1 + \sqrt{2\log(s)})}{\underline{\sigma}}\\
 + \frac{4\delta}{\underline{\sigma}}\times
\left(2 + \frac{\overline{\sigma}}{\underline{\sigma}}\sqrt{2\log(s)} + \frac{\overline{\sigma}}{\underline{\sigma}}\sqrt{2\vert\log(\underline{\sigma}/\delta)\vert}\right)
+ \frac{\delta}{\underline{\sigma}}(\sqrt{2\log(s)} + \sqrt{1\vee \log(\underline{\sigma}/\delta)})\\
\leq \frac{4\sqrt{2}\delta}{c}(1 + \sqrt{\log(s)}) + \frac{8\delta}{c}\times(1 + \frac{L}{c}\sqrt{\log(s)} + \frac{L}{c}\sqrt{\vert\log(c)\vert + \vert\log(\delta)\vert + \vert\log(L)\vert})\\
+ \frac{2\delta}{c}(\sqrt{\log(s)} + \sqrt{1 + \vert\log(c)\vert + \vert\log(\delta)\vert + \vert\log(L)\vert})\\
\leq C\delta L\times (1 + \sqrt{\log(s)} + \sqrt{\vert\log(\delta)\vert} + \sqrt{\log(L)})
\end{aligned}
\end{equation}
and we prove \eqref{eq.firGaussian}.

Without loss of generality, suppose $(\xi_1,...,\xi_s)$ are independent of $(\xi_1^\dagger,...,\xi^\dagger_s)$. Similar to \cite{MR3350040}, for
any $0\leq t\leq 1$, define the random variable $Z_i(t) = \sqrt{t}\xi_i + \sqrt{1 - t}\xi^\dagger_i$. According to theorem 2.27 in \cite{MR1681462} and lemma
2 in \cite{MR3350040},
for any $\tau,\psi > 0$,
\begin{equation}
\begin{aligned}
\mathbf{E}h_{\tau, \psi, x}(\xi_1,...,\xi_s) - \mathbf{E}h_{\tau, \psi, x}(\xi_1^\dagger,...,\xi_s^\dagger) = \mathbf{E}\int_{[0, 1]}\frac{d}{dt}h_{\tau,\psi,x}(Z_1(t),...,Z_s(t))dt\\
= \frac{1}{2}\sum_{i = 1}^s\int_{[0,1]} \mathbf{E}\partial_i h_{\tau, \psi,x}(Z_1(t),...,Z_s(t))\xi_i\times t^{-1/2}dt\\
- \frac{1}{2}\sum_{i = 1}^s\int_{[0, 1]}\mathbf{E}\partial_i h_{\tau, \psi, x}(Z_1(t),...,Z_s(t))\xi_i^\dagger\times (1 - t)^{-1/2} dt\\
= \frac{1}{2}\sum_{i = 1}^s\sum_{k = 1}^s(\sigma_{ik} - \sigma^\dagger_{ik})\int_{[0,1]}\mathbf{E}\partial_i\partial_k h_{\tau,\psi, x}(Z_1(t),...,Z_s(t))dt\\
\Rightarrow \sup_{x\in\mathbf{R}}\vert \mathbf{E}h_{\tau, \psi, x}(\xi_1,...,\xi_s) - \mathbf{E}h_{\tau, \psi, x}(\xi_1^\dagger,...,\xi_s^\dagger)\vert
\leq g_*\Delta\times (\psi^2 + \psi\tau)
\end{aligned}
\label{eq.h_property}
\end{equation}
Therefore, define $t = \frac{1}{\psi} + \frac{\log(2s)}{\tau}$, we have
\begin{equation}
\begin{aligned}
Prob\left(\max_{i = 1,...,s}\vert\xi_i\vert\leq x\right) - Prob\left(\max_{i = 1,...,s}\vert\xi^\dagger_i\vert\leq x\right)\\
\leq CLt(1 + \sqrt{\log(s)} + \sqrt{\vert\log(t)\vert} + \sqrt{\log(L)})\\
+ Prob\left(\max_{i = 1,...,s}\vert\xi_i\vert\leq x - t\right) - Prob\left(\max_{i = 1,...,s}\vert\xi^\dagger_i\vert\leq x\right)\\
\leq CLt(1 + \sqrt{\log(s)} + \sqrt{\vert\log(t)\vert} + \sqrt{\log(L)})\\
+ \mathbf{E}h_{\tau, \psi, x - \frac{1}{\psi}}(\xi_1,...,\xi_s) - \mathbf{E}h_{\tau, \psi, x - \frac{1}{\psi}}(\xi_1^\dagger,...,\xi_s^\dagger)\\
\text{and } Prob\left(\max_{i = 1,...,s}\vert\xi_i\vert\leq x\right) - Prob\left(\max_{i = 1,...,s}\vert\xi^\dagger_i\vert\leq x\right)\\
\geq -CLt(1 + \sqrt{\log(s)} + \sqrt{\vert\log(t)\vert} + \sqrt{\log(L)}) \\
+ Prob\left(\max_{i = 1,...,s}\vert\xi_i\vert\leq x + t\right) - Prob\left(\max_{i = 1,...,s}\vert\xi^\dagger_i\vert\leq x\right)\\
\geq -CLt(1 + \sqrt{\log(s)} + \sqrt{\vert\log(t)\vert} + \sqrt{\log(L)})\\
 + \mathbf{E}h_{\tau,\psi, x + \frac{\log(2s)}{\tau}}(\xi_1,...,\xi_s) - \mathbf{E}h_{\tau, \psi, x + \frac{\log(2s)}{\tau}}(\xi_1^\dagger,...\xi_s^\dagger)
\end{aligned}
\label{eq.prob_to_h}
\end{equation}
In particular,
\begin{equation}
\begin{aligned}
\sup_{x\in\mathbf{R}}\vert Prob\left(\max_{i = 1,...,s}\vert\xi_i\vert\leq x\right) - Prob\left(\max_{i = 1,...,s}\vert\xi^\dagger_i\vert\leq x\right)\vert\\
\leq CLt(1 + \sqrt{\log(s)} + \sqrt{\vert\log(t)\vert} + \sqrt{\log(L)})\\
+ \sup_{x\in\mathbf{R}}\vert \mathbf{E}h_{\tau,\psi, x}(\xi_1,...,\xi_s) - \mathbf{E}h_{\tau, \psi, x}(\xi_1^\dagger,...,\xi_s^\dagger)\vert\\
\leq CLt(1 + \sqrt{\log(s)} + \sqrt{\vert\log(t)\vert} + \sqrt{\log(L)}) + g_*\Delta(\psi^2 + \psi\tau)
\end{aligned}
\end{equation}
Choose $\tau = \psi = \left(1 + \log(2) +\log(s)\right)^{3/2}/ \Delta^{1/3}$, then $t = \frac{\Delta^{1/3}}{(1 + \log(2) + \log(s))^{1/2}} < 1$; and
\begin{equation}
\begin{aligned}
Lt(1 + \sqrt{\log(s)} + \sqrt{\vert\log(t)\vert} + \sqrt{\log(L)})\\
\leq 2L\Delta^{1/3} + CL\frac{\Delta^{1/6}}{(1 + \log(s))^{1/4}} + \frac{CL\sqrt{\log(L)}\Delta^{1/3}}{(1 + \log(s))^{1/2}}\\
\text{and } \Delta\psi^2 = \Delta^{1/3}\times (1 + \log(2) + \log(s))^3\leq C\Delta^{1/3}(1 + \log^3(s))
\end{aligned}
\end{equation}
and we prove \eqref{eq.normal_2}.

\end{proof}

\section{Proof of results of  section \ref{section.medium_range}}
\begin{proof}[Proof of Lemma \ref{lemma.linear_comb}]
Define the random variables $W_{i,j}^{(s)} = \mathbf{E}X_{i,j}|\mathcal{F}_{i,s}$ and $M_{k, j}^{(s)} = \sum_{i = T - k + 1}^T a_i(W_{i,j}^{(s)} - W_{i,j}^{(s - 1)})$, here $s\geq 1$. From corollary C.9 in \cite{MR2001996},
\begin{equation}
\lim_{s\to\infty} W_{i,j}^{(s)} = X_{i,j}\ \text{almost surely and } \lim_{s\to\infty}\Vert X_{i,j} - W_{i,j}^{(s)}\Vert_m =0
\end{equation}
We also know that $M_{k,j}^{(s)}$ is $\mathcal{F}_{T, k + s  -  1}$ measurable and
\begin{equation}
M_{k+1, j}^{(s)}  - M_{k, j}^{(s)} = a_{T - k}(W^{(s)}_{T-k, j} - W^{(s - 1)}_{T-k,j})
\end{equation}
Apply $\pi-\lambda$ system theorem on the $\lambda$(Dynkin) system
\begin{equation}
\left\{A\in\mathcal{F}_{T, k + s - 1}: \mathbf{E}W_{T - k, j}^{(s)}\times \mathbf{1}_A = \mathbf{E}W_{T - k, j}^{(s  - 1)}\times \mathbf{1}_A \right\}
\end{equation}
and the $\pi$ system $\{A_T\times A_{T - 1}\times ... \times A_{T - k - s  + 1}\}$, here $A_i$ is generated by $e_i$(see \eqref{eq.defEps}), we know that
$\mathbf{E}W_{T - k, j}^{(s)}|\mathcal{F}_{T, k + s  - 1} =  W_{T-k, j}^{(s - 1)}$ almost surely; and $M_{k, j}^{(s)}, k = 1,2,...,T$ form a martingale. Therefore, from the Burkholder's inequality(theorem 1.1 in \cite{MR0400380}) and theorem 2 in \cite{MR0133849},
\begin{equation}
\begin{aligned}
\Vert M_{T, j}^{(s)}\Vert_m\leq C\sqrt{\Vert\sum_{i = 1}^T a^2_i(W_{i,j}^{(s)} - W_{i,j}^{(s - 1)})^2\Vert_{m/2}}\leq C\sqrt{\sum_{i =  1}^T a^2_i}\times \delta_m(s)\\
\text{and } \Vert \sum_{i = 1}^T a_iX_{i, j }\Vert_m\leq \Vert\sum_{i = 1}^T a_i W_{i,j}^{(0)}\Vert + \sum_{s = 1}^\infty \Vert M_{T, j}^{(s)}\Vert_m\\
\leq C^\prime\sqrt{\sum_{i = 1}^T a^2_i\Vert X_{i,j}\Vert^2_m} + C\sqrt{\sum_{i = 1}^T a^2_i}\sum_{s = 1}^\infty\delta_m(s)
\end{aligned}
\label{eq.truncate}
\end{equation}
here $C,C^\prime$ are constants depending only on $m$. Similarly,
\begin{equation}
\begin{aligned}
\Vert\sum_{i = 1}^T a_i (X_{i,j} - \mathbf{E}X_{i, j}|\mathcal{F}_{i, s})\Vert_m\leq \sum_{k = s + 1}^\infty \Vert M_{T, j}^{(k)}\Vert_m\leq C\sqrt{\sum_{i = 1}^T a^2_i}\sum_{k = s + 1}^\infty \delta_m(k)
\end{aligned}
\end{equation}
From definition \ref{def.medium_range}, we prove \eqref{eq.linear_comb}.
\end{proof}

\begin{proof}[Proof of Lemma \ref{lemma.recognize}]
Notice that $\sum_{j = 0}^d a_{kj}(X_iX_{i - j} - \mathbf{E}X_iX_{i  - j})$ is a function of $..., e_{i - 1}, e_i$ and $\mathbf{E}\sum_{j = 0}^d a_{kj}(X_iX_{i - j} - \mathbf{E}X_iX_{i  - j}) = 0$. From lemma \ref{lemma.linear_comb} and \eqref{eq.cov}
\begin{equation}
\begin{aligned}
\Vert \sum_{j = 0}^d a_{kj}(X_iX_{i - j} - \mathbf{E}X_iX_{i  - j})\Vert_{m/2}\leq \Vert\sum_{j = 0}^d a_{kj}X_iX_{i - j}\Vert_{m/2} + \vert \sum_{j = 0}^da_{kj}\mathbf{E}X_iX_{i  - j}\vert\\
\leq \Vert X_i\Vert_m\times \Vert\sum_{j = 0}^d a_{kj}X_{i - j}\Vert_m +  \vert \sum_{j = 0}^da_{kj}\mathbf{E}X_iX_{i  - j}\vert\\
\leq C\sqrt{\sum_{j = 0}^d a^2_{kj}} + \sqrt{\sum_{j = 0}^d a^2_{kj}}\times \sqrt{\sum_{j = 0}^d (\mathbf{E}X_iX_{i - j})^2}\\
\leq \sqrt{\sum_{j = 0}^d a^2_{kj}}\times \left(C + C^\prime\sqrt{\sum_{j = 0}^d \frac{1}{(1 + j)^{2\alpha}}}\right)\\
\Rightarrow \max_{i\in\mathbf{Z}, k = 1,...,p_1}\Vert\sum_{j = 0}^d a_{kj}(X_iX_{i - j} - \mathbf{E}X_iX_{i  - j})\Vert_{m/2} = O(1)
\end{aligned}
\end{equation}
For any integer $s\geq 0$,
\begin{equation}
\begin{aligned}
\Vert Z_{i,k} - Z_{i,k}(s)\Vert_{m / 2}\leq\Vert X_i  - X_i(s)\Vert_m\times \Vert\sum_{j = 0}^d a_{kj}X_{i - j}\Vert_m\\
 + \Vert X_i(s)\Vert_m\times \Vert\sum_{j =0}^d a_{kj}(X_{i - j} - X_{i - j}(s - j))\Vert_m\\
 \leq C\delta_m(s) + C\sum_{j = 0}^d \vert a_{kj}\vert\times \delta_m(s - j)
\end{aligned}
\end{equation}
Then
\begin{equation}
\begin{aligned}
\sum_{s = s_0}^\infty \sup_{i\in\mathbf{Z}, k = 1,...,p_1}\Vert Z_{i,k} - Z_{i,k}(s)\Vert_{m / 2}\\
 \leq C\sum_{s = s_0}^\infty\delta_m(s) +  C\max_{k = 1,...,p_1, j = 0,...,d}\vert a_{kj}\vert\sum_{s = s_0}^\infty\sum_{j = 0}^d\delta_m(s - j)\\
 \leq \frac{C^\prime}{(1 + s_0)^\alpha} + C^\prime\sum_{j = 0}^d \sum_{s = s_0\vee j}^\infty\delta_m(s - j)\\
 \leq \frac{C^\prime}{(1 + s_0)^\alpha} + C^{\prime\prime}\sum_{j = 0}^d \frac{1}{(1 + 0 \vee (s_0 - j))^\alpha}\\
\end{aligned}
\end{equation}
If $s_0\geq d$, then
\begin{equation}
\begin{aligned}
\sum_{j = 0}^d \frac{1}{(1 + 0 \vee (s_0 - j))^{\alpha}} = \sum_{j = 0}^d \frac{1}{(1 + s_0 - j)^{\alpha}}\leq \frac{1}{(1 + s_0 - d)^{\alpha}} +
\int_{1 + s_0 - d}^{1 + s_0}\frac{1}{x^{\alpha}}dx\\
\leq \frac{C}{(1 + s_0 - d)^{\alpha - 1}}\\
\text{and } (1 + s_0)^{\alpha - 1}\frac{1}{(1 + s_0 - d)^{\alpha - 1}} = \left(1 + \frac{d}{1 + s_0 - d}\right)^{\alpha - 1}\leq (1 + d)^{\alpha - 1}
\end{aligned}
\end{equation}
On the other hand, if $s_0 < d$, then
\begin{equation}
\begin{aligned}
\sum_{j = 0}^d \frac{1}{(1 + 0 \vee (s_0 - j))^{\alpha}}  = (d - s_0) + \sum_{j = 0}^{s_0}\frac{1}{(1 + s_0 - j)^{\alpha}}\\
\text{and } (1 + s_0)^{\alpha - 1}\times (d - s_0)\leq (1 + d)^{\alpha}
\end{aligned}
\end{equation}
so $Z_{i, k}$ is $(\frac{m}{2}, \alpha - 1, \alpha\mathcal{B})$ - medium range dependent random variables and we prove (i).

From Theorem 3.1.1 in \cite{MR1093459}, there exists  $\psi_j, j = 0,1,...$ such that $\vert\psi_j\vert\leq c^{-j}$ with a constant $c > 1$ and $Y_i = \sum_{j = 0 }^\infty \psi_jX_{i - j}$. Therefore
\begin{equation}
\begin{aligned}
\mathbf{E}Y_i = \sum_{j = 0}^\infty \psi_j\mathbf{E}X_{i - j} = 0,\
\Vert Y_i\Vert_m\leq \sum_{j = 0}^\infty \vert\psi_j\vert\times \Vert X_{i - j}\Vert_m\leq C\sum_{j = 0}^\infty \vert\psi_j\vert\\
\text{and }\Vert Y_i  -Y_i(s)\Vert_m\leq\sum_{j = 0}^s\vert \psi_j\vert\times \Vert X_{i - j} - X_{i - j}(s - j)\Vert_m\\
\Rightarrow \sum_{s = s_0}^\infty \sup_{i\in\mathbf{Z}}\Vert Y_i  -Y_i(s)\Vert_m\leq \sum_{j = 0}^\infty c^{-j}\sum_{s =  s_0\vee j}^\infty \delta_m(s - j)
\leq C\sum_{j = 0}^\infty \frac{c^{-j}}{(1 + 0\vee (s_0 - j))^\alpha}
\end{aligned}
\label{eq.linear_process_1}
\end{equation}
Define $s_1 = \lfloor\frac{s_0}{2}\rfloor$, i.e., the largest integer that is smaller than or equal to $s_0 / 2$,
\begin{equation}
\begin{aligned}
\sum_{j = 0}^\infty \frac{c^{-j}}{(1 + 0\vee (s_0 - j))^\alpha}\leq \sum_{j = 0}^{s_1} \frac{c^{-j}}{(1 + \frac{s_0}{2})^\alpha} + \sum_{j = s_1 + 1}^\infty c^{-j}
\end{aligned}
\label{eq.linear_process_2}
\end{equation}
From \eqref{eq.linear_process_1} and \eqref{eq.linear_process_2}, we prove (ii).
\end{proof}

\begin{proof}[Proof of Theorem \ref{lemma.Gaussian}]
First from lemma \ref{lemma.recognize},  $Z_{i,k}, i\in\mathbf{Z}, k = 1,...,p_1$ are $(\frac{m}{2}, \alpha - 1, \alpha\mathcal{B})$-medium range
dependent random variables. Notice that $\xi_1,...,\xi_{p_1}$ are joint normal random variables and according to lemma \ref{lemma.linear_comb}

\begin{equation}
\begin{aligned}
\Vert \xi_k \Vert_2 = \Vert\frac{1}{\sqrt{T}}\sum_{i = 1}^T Z_{i, k}\Vert_2\leq \Vert\frac{1}{\sqrt{T}}\sum_{i = 1}^T Z_{i, k}\Vert_{m/2}
\leq CT^{\alpha\mathcal{B}}
\end{aligned}
\label{eq.xi}
\end{equation}
for a constant $C$.

For sufficiently large $\tau > 0$, define $t = \frac{1 + \log(2p_1)}{\tau}$. From lemma \ref{lemma.normal_property} and \eqref{eq.prob_to_h},
\begin{equation}
\begin{aligned}
\sup_{x\in\mathbf{R}}\vert Prob\left(\max_{k = 1,...,p_1}\vert\frac{1}{\sqrt{T}}\sum_{i = 1}^T Z_{i, k}\vert\leq x\right)
 -  Prob\left(\max_{k = 1,...,p_1}\vert\xi_k\vert\leq x\right)\vert\\
\leq \sup_{x\in\mathbf{R}}\vert \mathbf{E}h_{\tau,\tau, x}\left(\frac{1}{\sqrt{T}}\sum_{i = 1}^T Z_{i, 1},...,\frac{1}{\sqrt{T}}\sum_{i = 1}^T Z_{i, p_1}\right) - \mathbf{E}h_{\tau,\tau,x}(\xi_1,...,\xi_{p_1})\vert\\
+ CT^{\alpha\mathcal{B}}t\times \left(1 + \sqrt{\log(T)} + \sqrt{\vert\log(t)\vert}\right)
\end{aligned}
\label{eq.prob_to_hs}
\end{equation}
Define $Z_{i, k}^{(s)} = \mathbf{E}Z_{i,k}|\mathcal{F}_{i, s}$ for the integer $s\geq 0$. Then
\begin{equation}
\begin{aligned}
\Vert\frac{1}{\sqrt{T}}\sum_{i = 1}^T Z_{i, k} - \frac{1}{\sqrt{T}}\sum_{i = 1}^TZ_{i, k}^{(s)}\Vert_{m/2}\leq \frac{CT^{\alpha\mathcal{B}}}{(1 + s)^{\alpha - 1}}\\
\Rightarrow  \vert \mathbf{E}h_{\tau,\tau, x}\left(\frac{1}{\sqrt{T}}\sum_{i = 1}^T Z_{i, 1},...,\frac{1}{\sqrt{T}}\sum_{i = 1}^T Z_{i, p_1}\right)
- \mathbf{E}h_{\tau,\tau, x}\left(\frac{1}{\sqrt{T}}\sum_{i = 1}^T Z_{i, 1}^{(s)},...,\frac{1}{\sqrt{T}}\sum_{i = 1}^T Z_{i, p_1}^{(s)}\right)\vert\\
\leq g_*\tau\mathbf{E}\max_{k = 1,...,p_1}\vert\frac{1}{\sqrt{T}}\sum_{i = 1}^T (Z_{i, k} - Z_{i,k}^{(s)})\vert\\
\leq C\tau p_1^{2/m}\max_{k = 1,...,p_1}\Vert \frac{1}{\sqrt{T}}\sum_{i = 1}^T Z_{i, k} - \frac{1}{\sqrt{T}}\sum_{i = 1}^TZ_{i, k}^{(s)}\Vert_{m/2}
\leq C^\prime\tau\times\frac{T^{\alpha\mathcal{B} + \frac{2\alpha_{p_1}}{m}}}{(1 + s)^{\alpha - 1}}
\end{aligned}
\label{eq.sec_part}
\end{equation}

For any integer $l > s$, define the `big block' $\mathcal{S}_{jk}$ and the `small block' $\mathcal{U}_{jk}$ as
\begin{equation}
\mathcal{S}_{jk} = \frac{1}{\sqrt{T}}\sum_{v = (j -  1)\times (s + l) + 1}^{[(j - 1)\times (s + l) + l]\wedge T}Z_{v, k}^{(s)}\
\text{and } \mathcal{U}_{jk} = \frac{1}{\sqrt{T}}\sum_{v = (j - 1)\times(s + l) + l + 1}^{[j\times (s + l)]\wedge T}Z_{v, k}^{(s)}
\end{equation}
with $k = 0, 1,...,p_1$ and $j  = 1,2,...,\lceil\frac{T}{s + l}\rceil$, here $\lceil x\rceil$ represents the smallest integer that is larger than or equal to $x$.
Define $V_0 = \lceil\frac{T}{s + l}\rceil$, then
$\sum_{j = 1}^{V_0}\mathcal{S}_{jk} + \sum_{j = 1}^{V_0}\mathcal{U}_{jk} = \frac{1}{\sqrt{T}}\sum_{i  =1}^T Z_{i, k}^{(s)}$. Besides,
$(\mathcal{S}_{j1},...,\mathcal{S}_{jp_1})$ are mutually independent; $(\mathcal{U}_{j1},...,\mathcal{U}_{jp_1})$ are mutually
independent. Then from \eqref{eq.truncate}
\begin{equation}
\begin{aligned}
\vert\mathbf{E}h_{\tau,\tau,x}\left(\frac{1}{\sqrt{T}}\sum_{i = 1}^T Z_{i, 1},...,\frac{1}{\sqrt{T}}\sum_{i = 1}^T Z_{i, p_1}\right)
- \mathbf{E}h_{\tau, \tau, x}\left(\sum_{j = 1}^{V_0}\mathcal{S}_{j1},...,\sum_{j = 1}^{V_0}\mathcal{S}_{jp_1}\right)\vert\\
\leq g_*\tau\mathbf{E}\max_{k = 1,...,p_1}\vert\sum_{j = 1}^{V_0}\mathcal{U}_{jk}\vert\leq
g_*\tau p_1^{2/m}\times \max_{k = 1,...,p_1}\Vert\sum_{j = 1}^{V_0}\mathcal{U}_{jk}\Vert_{m/2}\\
\leq C\tau\times \frac{T^{\frac{2\alpha_{p_1}}{m}+\alpha\mathcal{B}}\sqrt{sV_0}}{\sqrt{T}}
\end{aligned}
\label{eq.third_part}
\end{equation}
Define $(\mathcal{S}^*_{j1},...\mathcal{S}^*_{jp_1}), j = 1,2,...V_0$ such that $(\mathcal{S}^*_{j1},...\mathcal{S}^*_{jp_1})$ are joint normal random
variables, $\mathbf{E}\mathcal{S}^*_{jk} = 0$, $\mathbf{E}\mathcal{S}^*_{jk_1}\mathcal{S}^*_{jk_2} = \mathbf{E}\mathcal{S}_{jk_1}\mathcal{S}_{jk_2}$;
$(\mathcal{S}^*_{j1},...\mathcal{S}^*_{jp_1})$ are mutually independent, and $\mathcal{S}^*_{j_1k_1}$ is independent of $\mathcal{S}_{j_2k_2}$ for
arbitrary $j_1,j_2,k_1,k_2$. Define $\mathcal{H}_{ik} = \sum_{j  = 1}^{i - 1}\mathcal{S}_{jk} + \sum_{j = i + 1}^{V_0}\mathcal{S}^*_{jk}$, then
we have $\mathcal{H}_{ik} + \mathcal{S}_{ik} = \mathcal{H}_{i+1k} + \mathcal{S}^*_{i+1k}$. Besides,
\begin{equation}
\begin{aligned}
\vert \mathbf{E}h_{\tau, \tau, x}\left(\sum_{j = 1}^{V_0}\mathcal{S}_{j1},...,\sum_{j = 1}^{V_0}\mathcal{S}_{jp_1}\right)
- \mathbf{E}h_{\tau, \tau, x}\left(\sum_{j = 1}^{V_0}\mathcal{S}^*_{j1},...,\sum_{j = 1}^{V_0}\mathcal{S}^*_{jp_1}\right)\vert\\
= \vert\mathbf{E}h_{\tau,\tau,x}\left(\mathcal{H}_{V_01} + \mathcal{S}_{V_01},...,\mathcal{H}_{V_0p_1}+\mathcal{S}_{V_0p_1}\right)
-\mathbf{E}h_{\tau,\tau,x}\left(\mathcal{H}_{11} + \mathcal{S}^*_{11},...,\mathcal{H}_{1p_1} + \mathcal{S}^*_{1p_1}\right)\vert\\
\leq \sum_{l = 1}^{V_0}\vert \mathbf{E}h_{\tau,\tau,x}\left(\mathcal{H}_{l1} + \mathcal{S}_{l1},...,\mathcal{H}_{lp_1}+\mathcal{S}_{lp_1}\right)
-\mathbf{E}h_{\tau,\tau,x}\left(\mathcal{H}_{l1} + \mathcal{S}^*_{l1},...,\mathcal{H}_{lp_1} + \mathcal{S}^*_{lp_1}\right)\vert
\end{aligned}
\end{equation}
According to \eqref{eq.truncate},
\begin{equation}
\begin{aligned}
\vert\mathbf{E}\left(h_{\tau,\tau,x}\left(\mathcal{H}_{l1} + \mathcal{S}_{l1},...,\mathcal{H}_{lp_1}+\mathcal{S}_{lp_1}\right) -h_{\tau,\tau,x}\left(\mathcal{H}_{l1} + \mathcal{S}^*_{l1},...,\mathcal{H}_{lp_1} + \mathcal{S}^*_{lp_1}\right) \right)
|\mathcal{H}_{l1},...,\mathcal{H}_{lp_1}\vert\\
\leq \vert\sum_{k = 1}^{p_1}\partial_k h_{\tau,\tau,x}(\mathcal{H}_{l1},...,\mathcal{H}_{lp_1})\times (\mathbf{E}\mathcal{S}_{lk} - \mathbf{E}\mathcal{S}^*_{lk})\vert\\
+ \frac{1}{2}\vert\sum_{k_1 = 1}^{p_1}\sum_{k_2 = 1}^{p_1}\partial_{k_1}\partial_{k_2}h_{\tau,\tau,x}(\mathcal{H}_{l1},...,\mathcal{H}_{lp_1})
\times(\mathbf{E}\mathcal{S}_{lk_1}\mathcal{S}_{lk_2} - \mathbf{E}\mathcal{S}_{lk_1}^*\mathcal{S}_{lk_2}^*)\vert\\
+ 3g_*\tau^3\mathbf{E}\max_{k = 1,...,p_1}\vert\mathcal{S}_{lk}\vert^3 + 3g_*\tau^3\mathbf{E}\max_{k = 1,...,p_1}\vert\mathcal{S}^*_{lk}\vert^3\\
\leq C\tau^3 p_1^{\frac{6}{m}}\times\max_{k = 1,...,p_1}\Vert\mathcal{S}_{lk}\Vert^3_{m/2}\\
\leq C^\prime \tau^3\times \frac{T^{\frac{6\alpha_{p_1}}{m} + 3\alpha\mathcal{B}}\times l^{3/2}}{T^{3/2}}\\
\text{which implies } \sup_{x\in\mathbf{R}}\vert \mathbf{E}h_{\tau, \tau, x}\left(\sum_{j = 1}^{V_0}\mathcal{S}_{j1},...,\sum_{j = 1}^{V_0}\mathcal{S}_{jp_1}\right)
- \mathbf{E}h_{\tau, \tau, x}\left(\sum_{j = 1}^{V_0}\mathcal{S}^*_{j1},...,\sum_{j = 1}^{V_0}\mathcal{S}^*_{jp_1}\right)\vert\\
\leq C^{\prime\prime}\tau^3\times T^{\frac{6\alpha_{p_1}}{m} + 3\alpha\mathcal{B} - \frac{1}{2}}\times \sqrt{l}
\end{aligned}
\label{eq.fourth_part}
\end{equation}
Notice that
\begin{equation}
\begin{aligned}
\vert\mathbf{E}\xi_{k_1}\xi_{k_2}  - \mathbf{E}\sum_{j_1 = 1}^{V_0}\sum_{j_2 = 1}^{V_0}\mathcal{S}^*_{j_1k_1}\mathcal{S}^*_{j_2k_2}\vert\\
= \vert\frac{1}{T}\sum_{i_1 = 1}^T\sum_{i_2 = 1}^T\mathbf{E}Z_{i_1, k_1}Z_{i_2, k_2}
- \sum_{j_1 = 1}^{V_0}\sum_{j_2 = 1}^{V_0}\mathbf{E}\mathcal{S}_{j_1k_1}\mathcal{S}_{j_2k_2}\vert\\
\leq \vert\frac{1}{T}\sum_{i_1 = 1}^T\sum_{i_2 = 1}^T\mathbf{E}Z_{i_1, k_1}Z_{i_2, k_2}
- \frac{1}{T}\sum_{i_1 = 1}^T\sum_{i_2 = 1}^T\mathbf{E}Z_{i_1, k_1}^{(s)}Z_{i_2, k_2}^{(s)}\vert
+ \vert\sum_{j_1 = 1}^{V_0}\sum_{j_2 = 1}^{V_0}\mathbf{E}\mathcal{S}_{j_1k_1}\mathcal{U}_{j_2k_2}\vert\\
+ \vert\sum_{j_1 = 1}^{V_0}\sum_{j_2 = 1}^{V_0}\mathbf{E}\mathcal{U}_{j_1k_1}\mathcal{S}_{j_2k_2}\vert
+ \vert\sum_{j_1 = 1}^{V_0}\sum_{j_2 = 1}^{V_0}\mathbf{E}\mathcal{U}_{j_1k_1}\mathcal{U}_{j_2k_2}\vert
\end{aligned}
\end{equation}
From lemma \ref{lemma.recognize} and \eqref{eq.truncate},
\begin{equation}
\begin{aligned}
\vert\sum_{j_1 = 1}^{V_0}\sum_{j_2 = 1}^{V_0}\mathbf{E}\mathcal{S}_{j_1k_1}\mathcal{U}_{j_2k_2}\vert
\leq \Vert\sum_{j_1 = 1}^{V_0}\mathcal{S}_{j_1k_1}\Vert_{m/2}\times \Vert\sum_{j_2 = 1}^{V_0}\mathcal{U}_{j_2k_2}\Vert_{m/2}
\leq \frac{CT^{2\alpha\mathcal{B}}\times V_0\sqrt{sl}}{T}\\
\text{and }\vert\sum_{j_1 = 1}^{V_0}\sum_{j_2 = 1}^{V_0}\mathbf{E}\mathcal{U}_{j_1k_1}\mathcal{U}_{j_2k_2}\vert
\leq \Vert \sum_{j_1 = 1}^{V_0}\mathcal{U}_{j_1k_1}\Vert_{m/2}\times \Vert \sum_{j_2 = 1}^{V_0}\mathcal{U}_{j_2k_2}\Vert_{m/2}
\leq \frac{CT^{2\alpha\mathcal{B}}\times V_0s}{T}
\end{aligned}
\end{equation}
On the other hand,  from lemma \ref{lemma.linear_comb} and \eqref{eq.truncate}
\begin{equation}
\begin{aligned}
\vert\frac{1}{T}\sum_{i_1 = 1}^T\sum_{i_2 = 1}^T Z_{i_1, k_1}Z_{i_2, k_2} - \frac{1}{T}\sum_{i_1 = 1}^T\sum_{i_2 = 1}^T
\mathbf{E}Z_{i_1, k_1}^{(s)}Z_{i_2, k_2}^{(s)}\vert\\
\leq \vert \frac{1}{T}\sum_{i_1 = 1}^T\sum_{i_2 = 1}^T Z_{i_1, k_1}Z_{i_2, k_2} - \frac{1}{T}\sum_{i_1 = 1}^T\sum_{i_2 = 1}^T Z_{i_1, k_1}^{(s)}Z_{i_2, k_2}\vert\\
+ \vert \frac{1}{T}\sum_{i_1 = 1}^T\sum_{i_2 = 1}^T Z_{i_1, k_1}^{(s)}Z_{i_2, k_2} - \frac{1}{T}\sum_{i_1 = 1}^T\sum_{i_2 = 1}^T Z_{i_1, k_1}^{(s)}Z_{i_2, k_2}^{(s)}\vert\\
\leq \frac{1}{T}\Vert\sum_{i = 1}^T (Z_{i, k_1} - Z_{i, k_1}^{(s)})\Vert_{m/2}\times \Vert\sum_{i = 1}^T Z_{i, k_2}\Vert_{m/2}
+ \frac{1}{T}\Vert\sum_{i = 1}^T Z_{i, k_1}^{(s)}\Vert_{m/2}\times \Vert \sum_{i = 1}^T (Z_{i, k_2} - Z_{i, k_2}^{(s)})\Vert_{m/2}\\
\leq \frac{CT^{2\alpha\mathcal{B}}}{(1 + s)^{\alpha - 1}}
\end{aligned}
\end{equation}
From \eqref{eq.h_property},
\begin{equation}
\begin{aligned}
\sup_{x\in\mathbf{R}}\vert\mathbf{E}h_{\tau,\tau,x}\left(\sum_{j = 1}^{V_0}\mathcal{S}^*_{j1},...,\sum_{j = 1}^{V_0}\mathcal{S}^*_{jp_1}\right)
- \mathbf{E}h_{\tau,\tau,x}\left(\xi_1,...\xi_{p_1}\right)\vert\\
\leq C\tau^2\times \left(\frac{T^{2\alpha\mathcal{B}}}{(1 + s)^{\alpha - 1}} + \frac{T^{2\alpha\mathcal{B}}V_0\sqrt{sl}}{T}\right)
\end{aligned}
\label{eq.fifth_part}
\end{equation}
From \eqref{eq.prob_to_hs}, \eqref{eq.sec_part}, \eqref{eq.third_part}, \eqref{eq.fourth_part} and \eqref{eq.fifth_part},
\begin{equation}
\begin{aligned}
\sup_{x\in\mathbf{R}}\vert Prob\left(\max_{k = 1,...,p_1}\vert\frac{1}{\sqrt{T}}\sum_{i = 1}^T Z_{i, k}\vert\leq x\right)
 -  Prob\left(\max_{k = 1,...,p_1}\vert\xi_k\vert\leq x\right)\vert\\
 \leq CT^{\alpha\mathcal{B}}t\times \left(1 + \sqrt{\log(T)} + \sqrt{\vert\log(t)\vert}\right)
 +  C\tau\times\frac{T^{\alpha\mathcal{B} + \frac{2\alpha_{p_1}}{m}}}{(1 + s)^{\alpha - 1 }}\\
 + C\tau\times \frac{T^{\frac{2\alpha_{p_1}}{m}+\alpha\mathcal{B}}\sqrt{sV_0}}{\sqrt{T}}
 + C\tau^3\times T^{\frac{6\alpha_{p_1}}{m} + 3\alpha\mathcal{B} - \frac{1}{2}}\times \sqrt{l}
 + C\tau^2\times \left(\frac{T^{2\alpha\mathcal{B}}}{(1 + s)^{\alpha - 1}} + \frac{T^{2\alpha\mathcal{B}}V_0\sqrt{sl}}{T}\right)
\end{aligned}
\end{equation}
Choose $s = \lfloor T^{\alpha_s}\rfloor$ and $l = \lfloor T^{\alpha_l}\rfloor$, here $\alpha_s,\ \alpha_l$ satisfy \eqref{eq.cond_alpha} and $\lfloor x\rfloor$ denotes the largest integer that is smaller than or equal to
$x$.
Then select $\tau = T^{\alpha\beta + \lambda}$ such that $\lambda>0$ and
\begin{equation}
\begin{aligned}
2\alpha\mathcal{B} + \frac{2\alpha_{p_1}}{m} - (\alpha - 1)\alpha_s + \lambda<0, \ 2\alpha\mathcal{B} + \frac{2\alpha_{p_1}}{m} + \frac{1}{2}\alpha_s
- \frac{1}{2}\alpha_l + \lambda < 0\\
6\alpha\mathcal{B} + \frac{6\alpha_{p_1}}{m} + \frac{1}{2}\alpha_l - \frac{1}{2} + 3\lambda < 0,\ 4\alpha\mathcal{B} + 2\lambda - (\alpha - 1)\alpha_s < 0\\
4\alpha\mathcal{B} + \frac{1}{2}\alpha_s - \frac{1}{2}\alpha_l + 2\lambda < 0\\
\end{aligned}
\end{equation}
According to \eqref{eq.cond_alpha}, this $\lambda $ exists. We have $t  = \frac{1 + \log(2p_1)}{\tau}\leq CT^{-\alpha\mathcal{B} -\lambda}\times \log(T) < 1$ for
sufficiently large $T$; and $t\geq \frac{1}{\tau} = T^{-\alpha\mathcal{B} - \lambda}$. For sufficiently large $T$
\begin{equation}
\begin{aligned}
T^{\alpha\mathcal{B}}t\times \left(1 + \sqrt{\log(T)} + \sqrt{\vert\log(t)\vert}\right)\leq CT^{-\lambda}(\log(T))^{3/2}\\
\tau\times\frac{T^{\alpha\mathcal{B} + \frac{2\alpha_{p_1}}{m}}}{(1 + s)^{\alpha - 1}}\leq CT^{2\alpha\mathcal{B} + \lambda + \frac{2\alpha_{p_1}}{m} - (\alpha - 1)\alpha_s}\\
\tau\times \frac{T^{\frac{2\alpha_{p_1}}{m}+\alpha\mathcal{B}}\sqrt{sV_0}}{\sqrt{T}}\leq CT^{2\alpha\mathcal{B} + \frac{2\alpha_{p_1}}{m} + \lambda + \frac{\alpha_s}{2} - \frac{\alpha_l}{2}}\\
\tau^3\times T^{\frac{6\alpha_{p_1}}{m} + 3\alpha\mathcal{B} - \frac{1}{2}}\times \sqrt{l}\leq CT^{6\alpha\mathcal{B} + \frac{6\alpha_{p_1}}{m} + 3\lambda + \frac{\alpha_l}{2} - \frac{1}{2}}\\
\frac{\tau^2\times T^{2\alpha\mathcal{B}}}{(1 + s)^{\alpha - 1}}\leq CT^{4\alpha\mathcal{B} + 2\lambda - (\alpha - 1)\alpha_s},\ \frac{\tau^2T^{2\alpha\mathcal{B}}V_0\sqrt{sl}}{T}
\leq CT^{4\alpha\mathcal{B} + 2\lambda  + \frac{\alpha_s}{2} - \frac{\alpha_l}{2}}
\end{aligned}
\end{equation}
and we prove \eqref{eq.delta_prob}.
\end{proof}

\begin{proof}[Proof of Lemma \ref{lemma.covariance}]
First notice that
\begin{equation}
\begin{aligned}
\max_{j_1,j_2 = 1,...,p_1}\vert\frac{1}{T}\sum_{i_1 = 1}^T\sum_{i_2 = 1}^T Z_{i_1, j_1}Z_{i_2, j_2}K\left(\frac{i_1 - i_2}{k_T}\right) - \frac{1}{T}\sum_{i_1 = 1}^T\sum_{i_2 = 1}^T\mathbf{E}Z_{i_1, j_1}Z_{i_2, j_2}\vert\\
\leq \max_{j_1,j_2 = 1,...,p_1}\vert \frac{1}{T}\sum_{i_1 = 1}^T\sum_{i_2 = 1}^T (Z_{i_1, j_1}Z_{i_2, j_2} - \mathbf{E}Z_{i_1, j_1}Z_{i_2, j_2})K\left(\frac{i_1 - i_2}{k_T}\right)\vert\\
+ \max_{j_1,j_2 =1,...,p_1}\vert\frac{1}{T}\sum_{i_1 = 1}^T\sum_{i_2 = 1}^T \mathbf{E}Z_{i_1, j_1}Z_{i_2, j_2}\times \left(K\left(\frac{i_1 - i_2}{k_T}\right) - 1\right) \vert\\
\end{aligned}
\end{equation}
From lemma \ref{lemma.recognize}, $Z_{i, k}$  are $(\frac{m}{2}, \alpha - 1, \alpha\mathcal{B})$-medium range dependent random variables.
From \eqref{eq.cov}, there exists a constant $C$ such that
$\vert \mathbf{E}Z_{i_1, j_1}Z_{i_2, j_2}\vert\leq \frac{CT^{\alpha\mathcal{B}}}{(1 + \vert i_1 - i_2\vert)^{\alpha - 1}}$.
From section 0.9.7 in \cite{MR2978290},
\begin{equation}
\begin{aligned}
\vert\frac{1}{T}\sum_{i_1 = 1}^T\sum_{i_2 = 1}^T \mathbf{E}Z_{i_1, j_1}Z_{i_2, j_2}\times \left(K\left(\frac{i_1 - i_2}{k_T}\right) - 1\right) \vert\\
\leq \frac{CT^{\alpha\mathcal{B}}}{T}\sum_{i_1 = 1}^T\sum_{i_2  = 1}^T\left(1 - K\left(\frac{i_1 - i_2}{k_T}\right)\right)\times \frac{1}{(1 + \vert i_1 - i_2\vert)^{\alpha - 1}}\\
\leq 2CT^{\alpha\mathcal{B}}\sum_{l = 0}^\infty \left(1 - K\left(\frac{l}{k_T}\right)\right)\times \frac{1}{(1 + l)^{\alpha - 1}}
\end{aligned}
\end{equation}
Set $L = \lfloor \frac{k_T}{2}\rfloor > 0$ for sufficiently large $T$, then
\begin{equation}
\begin{aligned}
\sum_{l = 0}^\infty \left(1 - K\left(\frac{l}{k_T}\right)\right)\times \frac{1}{(1 + l)^{\alpha - 1}}\\
\leq \frac{\sup_{x\in[0,1]}\vert K^\prime(x)\vert}{k_T}\sum_{l = 0}^L \frac{1}{(1 + l)^{\alpha - 2}}
+ \sum_{l = L+1}^\infty \frac{1}{(1 + l)^{\alpha - 1}}
\end{aligned}
\end{equation}
For $\alpha > 2$,
\begin{equation}
\begin{aligned}
\sum_{l = 0}^L \frac{1}{(1 + l)^{\alpha - 2}}\leq 1 + \int_{[1, L+1]}\frac{\mathrm{d}x}{x^{\alpha - 2}}\\
\text{and } \sum_{l = L+1}^\infty \frac{1}{(1 + l)^{\alpha - 1}}\leq \int_{[L+1,\infty)}\frac{\mathrm{d}x}{x^{\alpha - 1}}
= \frac{1}{\alpha - 2}(L+1)^{2-\alpha}
\end{aligned}
\end{equation}
For $\int_{[1, L+1]}\frac{\mathrm{d}x}{x^{\alpha - 2}}\leq C(L + 1)^{3-\alpha}$ if $2<\alpha <3$;
$\int_{[1, L+1]}\frac{\mathrm{d}x}{x^{\alpha - 2}}\leq C\log (L + 1)$ if $\alpha = 3$; and
$\int_{[1, L+1]}\frac{\mathrm{d}x}{x^{\alpha - 2}}\leq C$ if $\alpha > 3$, we have
\begin{equation}
\begin{aligned}
\max_{j_1,j_2 = 1,...,p_1}\vert\frac{1}{T}\sum_{i_1 = 1}^T\sum_{i_2 = 1}^T \mathbf{E}Z_{i_1, j_1}Z_{i_2, j_2}\times \left(K\left(\frac{i_1 - i_2}{k_T}\right) - 1\right) \vert
= O\left(v_T\times T^{\alpha\mathcal{B}}\right)
\end{aligned}
\end{equation}
On the other hand,
\begin{equation}
\begin{aligned}
\Vert \frac{1}{T}\sum_{i_1 = 1}^T\sum_{i_2 = 1}^T (Z_{i_1, j_1}Z_{i_2, j_2} - \mathbf{E}Z_{i_1, j_1}Z_{i_2, j_2})K\left(\frac{i_1 - i_2}{k_T}\right)\Vert_{m/4}\\
\leq \frac{1}{T}\sum_{s = 0}^{T  - 1}K\left(\frac{s}{k_T}\right)\Vert \sum_{i_2 = 1}^{T  - s}(Z_{i_2, j_2}Z_{i_2 + s, j_1} - \mathbf{E}Z_{i_2, j_2}Z_{i_2 + s, j_1})\Vert_{m/4}\\
+ \frac{1}{T}\sum_{s = 1}^{T -  1}K\left(\frac{s}{k_T}\right)\Vert \sum_{i_1 = 1}^{T  - s}(Z_{i_1, j_1}Z_{i_1 + s, j_2} - \mathbf{E}Z_{i_1, j_1}Z_{i_1 + s, j_2})\Vert_{m/4}
\end{aligned}
\end{equation}
Form Theorem 2 in \cite{MR0133849} and \eqref{eq.truncate}
\begin{equation}
\begin{aligned}
\Vert \sum_{i_2 = 1}^{T  - s}(Z_{i_2, j_2}Z_{i_2 + s, j_1} - \mathbf{E}Z_{i_2, j_2}Z_{i_2 + s, j_1})\Vert_{m/4}\\
\leq \Vert \sum_{i_2 = 1}^{T  - s}(\mathbf{E}(Z_{i_2, j_2}Z_{i_2 + s, j_1})|\mathcal{F}_{i_2 + s, 0} - \mathbf{E}Z_{i_2, j_2}Z_{i_2 + s, j_1})\Vert_{m/4}\\
+ \sum_{t = 1}^\infty\Vert \sum_{i_2 = 1}^{T - s}(\mathbf{E}(Z_{i_2, j_2}Z_{i_2 + s, j_1})|\mathcal{F}_{i_2 + s, t} - \mathbf{E}Z_{i_2, j_2}Z_{i_2 + s, j_1}|\mathcal{F}_{i_2 + s, t - 1})\Vert_{m/4}\\
\leq C\sqrt{T}\max_{i_2 = 1,...,T-s}\Vert Z_{i_2, j_2}Z_{i_2 + s, j_1} - \mathbf{E}Z_{i_2, j_2}Z_{i_2 + s, j_1}\Vert_{m/4}\\
+ C\sqrt{T}\sum_{t = 1}^\infty \max_{i_2 = 1,...,T-s}\Vert Z_{i_2, j_2}(t - s)Z_{i_2 + s , j_1}(t) - Z_{i_2, j_2}Z_{i_2 + s , j_1}\Vert_{m/4}
\end{aligned}
\end{equation}
For
\begin{equation}
\begin{aligned}
\Vert Z_{i_2, j_2}Z_{i_2 + s, j_1} - \mathbf{E}Z_{i_2, j_2}Z_{i_2 + s, j_1}\Vert_{m/4}\leq
\Vert Z_{i_2, j_2}\Vert_{m/2}\Vert Z_{i_2 + s, j_1}\Vert_{m/2} + \vert\mathbf{E}Z_{i_2, j_2}Z_{i_2 + s, j_1}\vert\\
\leq 2\Vert Z_{i_2, j_2}\Vert_{m/2}\Vert Z_{i_2 + s, j_1}\Vert_{m/2}\leq C\\
\text{and  } \sum_{t = 1}^\infty \max_{i_2 = 1,...,T-s}\Vert Z_{i_2, j_2}(t - s)Z_{i_2 + s , j_1}(t) - Z_{i_2, j_2}Z_{i_2 + s , j_1}\Vert_{m/4}\\
\leq \sum_{t = 1}^\infty \max_{i_2 = 1,...,T-s}\Vert Z_{i_2, j_2}(t - s)\Vert_{m/2}\times \Vert Z_{i_2 +s, j_1}(t) - Z_{i_2 + s, j_1}\Vert_{m/2}\\
+ \sum_{t = 1}^\infty \max_{i_2 = 1,...,T - s}\Vert Z_{i_2 + s, j_1}\Vert_{m/2}\times \Vert Z_{i_2, j_2}(t - s)  - Z_{i_2, j_2}\Vert_{m/2}\leq CT^{\alpha\mathcal{B}}
\end{aligned}
\end{equation}
Therefore,
\begin{equation}
\begin{aligned}
\Vert \frac{1}{T}\sum_{i_1 = 1}^T\sum_{i_2 = 1}^T (Z_{i_1, j_1}Z_{i_2, j_2} - \mathbf{E}Z_{i_1, j_1}Z_{i_2, j_2})K\left(\frac{i_1 - i_2}{k_T}\right)\Vert_{m/4}
\leq \frac{CT^{\alpha\mathcal{B}}}{\sqrt{T}}\sum_{s = 0}^\infty K\left(\frac{s}{k_T}\right)
\end{aligned}
\end{equation}
Since $\sum_{s = 0}^\infty K\left(\frac{s}{k_T}\right) \leq K(0) + \int_{[0,\infty)}K\left(\frac{x}{k_T}\right)dx\leq Ck_T$,
\begin{equation}
\begin{aligned}
\Vert\ \max_{j_1,j_2 = 1,...,p_1}\vert\frac{1}{T}\sum_{i_1 = 1}^T\sum_{i_2 = 1}^T Z_{i_1, j_1}Z_{i_2, j_2}K\left(\frac{i_1 - i_2}{k_T}\right) - \frac{1}{T}\sum_{i_1 = 1}^T\sum_{i_2 = 1}^T\mathbf{E}Z_{i_1, j_1}Z_{i_2, j_2}\vert\ \Vert_{m/4}\\
\leq \max_{j_1,j_2 =1,...,p_1}\vert\frac{1}{T}\sum_{i_1 = 1}^T\sum_{i_2 = 1}^T \mathbf{E}Z_{i_1, j_1}Z_{i_2, j_2}\times \left(K\left(\frac{i_1 - i_2}{k_T}\right) - 1\right) \vert\\
+p_1^{\frac{8}{m}}\max_{j_1, j_2 = 1,...,p_1} \Vert \frac{1}{T}\sum_{i_1 = 1}^T\sum_{i_2 = 1}^T (Z_{i_1, j_1}Z_{i_2, j_2} - \mathbf{E}Z_{i_1, j_1}Z_{i_2, j_2})K\left(\frac{i_1 - i_2}{k_T}\right)\Vert_{m/4}\\
= O\left(v_T\times T^{\alpha\mathcal{B}} + k_T\times T^{\frac{8\alpha_{p_1}}{m} + \alpha\mathcal{B} - \frac{1}{2}}\right)
\end{aligned}
\end{equation}
and we prove \eqref{eq.cov_converge}.
\end{proof}

\section{Proof of Theorem \ref{theorem.auto_covariance}}
\begin{proof}[Proof of Theorem \ref{theorem.auto_covariance}]
Suppose $\mathcal{H}  = \{h_1, h_2,...,h_z\}$ with $z  = \vert\mathcal{H}\vert\leq (d + 1) = O(T^{\beta_X})$,
then define the real numbers $b_{ij}, i = 1,2,...,z, j = 0,1,...d$ such that $b_{ih_i} = 1$ and $b_{ij} = 0$ for $j\neq h_i$.
We have $\sum_{j = 0}^d b_{ij}^2 =  1$ and
\begin{equation}
\frac{1}{\sqrt{T}}\sum_{i = 1}^T \sum_{j = 0}^d b_{kj}(X_iX_{i  - j} - \sigma_{j}) =
\frac{1}{\sqrt{T}}\sum_{i = 1}^T(X_iX_{i - h_k} - \sigma_{h_k})
\end{equation}
From Theorem \ref{lemma.Gaussian},
\begin{equation}
\sup_{x\in\mathbf{R}}\vert Prob\left(\max_{k \in\mathcal{H}}\vert \frac{1}{\sqrt{T}}\sum_{i = 1}^T X_iX_{i - k} - \sqrt{T}\sigma_k\vert\leq x\right)
 -  Prob\left(\max_{k\in\mathcal{H}}\vert\xi_k\vert\leq x\right)\vert = o(1)
\end{equation}
From Lemma \ref{lemma.recognize}, the random variables $X_iX_{i - k} - \sigma_k, i\in\mathbf{Z}, k = 0,1,...d$ are $(\frac{m}{2}, \alpha_X - 1, \alpha_X\beta_X)$
medium range dependent random variables. According to Lemma \ref{lemma.linear_comb} and  expression \eqref{eq.cov},
\begin{equation}
\begin{aligned}
\Vert \sqrt{T}(\widehat{\sigma}_k - \sigma_k) - \left( \frac{1}{\sqrt{T}}\sum_{i = 1}^T X_iX_{i - k} - \sqrt{T}\sigma_k\right)\Vert_{m/2}\\
\leq \frac{k\vert\sigma_k\vert}{\sqrt{T}}
+ \frac{1}{\sqrt{T}}\Vert\sum_{i = 1}^k (X_iX_{i  - k} - \sigma_k)\Vert_{m/2}\\
\leq \frac{Ck}{\sqrt{T}}\times \frac{1}{(1 + k)^{\alpha_X}} + \frac{C\sqrt{k}\times T^{\alpha_X\beta_X}}{\sqrt{T}}\leq C^\prime T^{\alpha_X\beta_X - \frac{1}{2} + \frac{\beta_X}{2}}\\
\Rightarrow \Vert\  \max_{k\in\mathcal{H}}\vert\sqrt{T}(\widehat{\sigma}_k - \sigma_k) - \left( \frac{1}{\sqrt{T}}\sum_{i = 1}^T X_iX_{i - k} - \sqrt{T}\sigma_k\right)\vert\ \Vert_{m/2}\\
\leq CT^{\alpha_X\beta_X - \frac{1}{2} + \frac{\beta_X}{2} + \frac{2\beta_X}{m}}\
\end{aligned}
\label{eq.deviation}
\end{equation}
From Lemma \ref{lemma.Gaussian} and \eqref{eq.xi}, for any given $\xi>0$, $\exists$ a constant $C_\xi>0$ such that for sufficiently large $T$
\begin{equation}
\begin{aligned}
Prob\left(\max_{j\in\mathcal{H}}\sqrt{T}\vert\widehat{\sigma}_j - \sigma_j\vert\leq x\right)\\
\leq
\xi + Prob\left(\max_{k \in\mathcal{H}}\vert \frac{1}{\sqrt{T}}\sum_{i = 1}^T X_iX_{i - k} - \sqrt{T}\sigma_k\vert\leq x + C_\xi T^{\frac{2\beta_X}{m} + \alpha_X\beta_X - \frac{1}{2} + \frac{\beta_X}{2}}\right)\\
\leq 2\xi + Prob\left(\max_{k\in\mathcal{H}}\vert\xi_k\vert\leq x\right) + CT^{\alpha_X\beta_X}\delta\times (1 + \sqrt{\log(T)} + \sqrt{\vert\log(\delta)\vert})\\
\text{and }
Prob\left(\max_{j\in\mathcal{H}}\sqrt{T}\vert\widehat{\sigma}_j - \sigma_j\vert\leq x\right)\\
\geq
-\xi + Prob\left(\max_{k \in\mathcal{H}}\vert \frac{1}{\sqrt{T}}\sum_{i = 1}^T X_iX_{i - k} - \sqrt{T}\sigma_k\vert\leq x - C_\xi T^{\frac{2\beta_X}{m} + \alpha_X\beta_X - \frac{1}{2} + \frac{\beta_X}{2}}\right)\\
\geq -2\xi + Prob\left(\max_{k\in\mathcal{H}}\vert\xi_k\vert\leq x\right) - CT^{\alpha_X\beta_X}\delta\times (1 + \sqrt{\log(T)} + \sqrt{\vert\log(\delta)\vert})\\
\end{aligned}
\end{equation}
with $\delta = C_\xi T^{\frac{2\beta_X}{m} + \alpha_X\beta_X - \frac{1}{2} + \frac{\beta_X}{2}}$. From \eqref{eq.condH},
$\frac{2\beta_X}{m} + 2\alpha_X\beta_X - \frac{1}{2} + \frac{\beta_X}{2} < 0$, so we prove \eqref{eq.auto_covariance_gaussian}.

Define the real numbers $c_{ij}, i = 1,...,z, j = 0,1,...,d $ such that $c_{i0} = -\frac{\sigma_{h_i}}{\sigma^2_0}$, $\sigma_{ih_i} = \frac{1}{\sigma_0}$, and
$c_{ij} = 0$ for $j\neq 0,h_i$. From \eqref{eq.cov}, we have $Z_{i,h_i} = \sum_{k = 0}^dc_{ik}(X_iX_{i - k} - \sigma_k)$,
\begin{equation}
\begin{aligned}
\vert\sigma_j\vert\leq \frac{C}{(1 + j)^{\alpha_X}}\ \text{for a constant $C$.}\\
\Rightarrow \sum_{j = 0}^d c^2_{ij} = \frac{\sigma^2_{h_i}}{\sigma^4_0} + \frac{1}{\sigma^2_0}\leq C^\prime\ \text{for a constant $C^\prime$}
\label{eq.size_sigma}
\end{aligned}
\end{equation}
From theorem \ref{lemma.Gaussian},
\begin{equation}
\begin{aligned}
\sup_{x\in\mathbf{R}}\vert Prob\left(\max_{k\in\mathcal{I}}\vert\frac{1}{\sqrt{T}}\sum_{i = 1}^T Z_{i, k}\vert\leq x\right)
 -  Prob\left(\max_{k\in\mathcal{I}}\vert\zeta_k\vert\leq x\right)\vert = o(1)
\end{aligned}
\end{equation}
On the other hand, from Lemma \ref{lemma.linear_comb} and \eqref{eq.deviation}
\begin{equation}
\begin{aligned}
\Vert \widehat{\sigma}_j - \sigma_j \Vert_{m/2}\leq \frac{j\vert\sigma_j\vert}{T}
+ \frac{1}{T}\Vert\sum_{i = j + 1}^T(X_iX_{i - j} - \sigma_j)\Vert_{m/2}
\leq \frac{C}{(1 + j)^{\alpha_X}}\times \frac{j}{T} + CT^{\alpha_X\beta_X - \frac{1}{2}}\\
\Rightarrow \Vert\ \max_{j = 0,1,...,d}\vert\widehat{\sigma}_j - \sigma_j\vert \ \Vert_{m / 2}\leq (d + 1)^{2/m}\max_{j = 0,1,...,d}\Vert \widehat{\sigma}_j - \sigma_j\Vert_{m/2}
= O\left(T^{\alpha_X\beta_X + \frac{2\beta_X}{m} - \frac{1}{2}}\right)
\label{eq.maxAUTO}
\end{aligned}
\end{equation}
Therefore, with probability tending to $1$ we have $\vert \frac{\widehat{\sigma}_0 - \sigma_0}{\sigma_0}\vert < 1/2$. Correspondingly,
\begin{equation}
\begin{aligned}
\vert\sqrt{T}(\widehat{\rho}_j - \rho_j)  - \frac{1}{\sqrt{T}}\sum_{i = 1}^T Z_{i, j}\vert
\leq \vert\frac{\sqrt{T}(\widehat{\sigma}_j - \sigma_j)}{\sigma_0} - \frac{1}{\sigma_0\sqrt{T}}\sum_{i = 1}^T(X_iX_{i - j} - \sigma_j)\vert\\
+\vert-\frac{\sqrt{T}\widehat{\sigma}_j(\widehat{\sigma}_0 - \sigma_0)}{\sigma^2_0} + \frac{\sigma_j}{\sigma^2_0\sqrt{T}}\sum_{i = 1}^T(X^2_i - \sigma_0)\vert
+ \sqrt{T}\vert\frac{\widehat{\sigma}_j}{\sigma_0}\vert\times \sum_{j = 2}^\infty\vert \frac{\widehat{\sigma}_0 - \sigma_0}{\sigma_0}\vert^j\\
\leq  \frac{1}{\sigma_0\sqrt{T}}\vert\sum_{i = 1}^j X_iX_{i - j }\vert
+ \vert\frac{\sqrt{T}(\widehat{\sigma}_0 - \sigma_0)}{\sigma^2_0}\vert\times \vert\widehat{\sigma}_j - \sigma_j\vert
+ 2\vert\frac{\sqrt{T}\widehat{\sigma}_j}{\sigma_0}\vert\times \vert \frac{\widehat{\sigma}_0 - \sigma_0}{\sigma_0}\vert^2\\
\Rightarrow \max_{j\in\mathcal{I}}\vert\sqrt{T}(\widehat{\rho}_j - \rho_j)  - \frac{1}{\sqrt{T}}\sum_{i = 1}^T Z_{i, j}\vert\leq
\frac{1}{\sigma_0}\times \max_{j\in\mathcal{I}}\frac{\vert\sum_{i = 1}^j X_iX_{i - j}\vert}{\sqrt{T}}\\
+ \frac{\sqrt{T}}{\sigma^2_0}(\max_{j\in\mathcal{I}}\vert\widehat{\sigma}_j - \sigma_j\vert)^2 + \frac{2\sqrt{T}}{\sigma^3_0}(\max_{j\in\mathcal{I}}\vert\widehat{\sigma}_j - \sigma_j\vert)^3
+ \frac{2\sqrt{T}\max_{j\in\mathcal{I}}\vert\sigma_j\vert}{\sigma^3_0}(\max_{j\in\mathcal{I}}\vert\widehat{\sigma}_j - \sigma_j\vert)^2\\
= O_p\left(T^{2\alpha_X\beta_X + \frac{4\beta_X}{m} - \frac{1}{2}}\right)
\end{aligned}
\end{equation}
Therefore, for any given $\xi>0$, choose sufficiently large constant $C_\xi>0$ and $T$ such that
\begin{equation}
\begin{aligned}
Prob\left(\max_{j\in\mathcal{I}}\sqrt{T}\vert\widehat{\rho}_j - \rho_j\vert\leq x\right)
\leq \xi + Prob\left(\max_{j\in\mathcal{I}}\vert\frac{1}{\sqrt{T}}\sum_{i = 1}^T Z_{i,j}\vert\leq x + C_\xi T^{2\alpha_X\beta_X + \frac{4\beta_X}{m} - \frac{1}{2}}\right)\\
\leq 2\xi + Prob\left(\max_{j\in\mathcal{I}}\vert\zeta_k\vert\leq x\right) + CT^{\alpha_X\beta_X}\delta\times (1 + \sqrt{\log(T)} + \sqrt{\vert\log(\delta)\vert})\\
\text{and } Prob\left(\max_{j\in\mathcal{I}}\sqrt{T}\vert\widehat{\rho}_j - \rho_j\vert\leq x\right)\geq - \xi
+ Prob\left(\max_{j\in\mathcal{I}}\vert\frac{1}{\sqrt{T}}\sum_{i = 1}^T Z_{i,j}\vert\leq x - C_\xi T^{2\alpha_X\beta_X + \frac{4\beta_X}{m} - \frac{1}{2}}\right)\\
\geq -2\xi  + Prob\left(\max_{j\in\mathcal{I}}\vert\zeta_k\vert\leq x\right) - CT^{\alpha_X\beta_X}\delta\times (1 + \sqrt{\log(T)} + \sqrt{\vert\log(\delta)\vert})\\
\end{aligned}
\end{equation}
with $\delta = C_\xi T^{2\alpha_X\beta_X + \frac{4\beta_X}{m} - \frac{1}{2}}$. Form \eqref{eq.condH},
$3\alpha_X\beta_X + \frac{4\beta_X}{m} - \frac{1}{2} < 0$,  and we prove \eqref{eq.rho_central}.
\end{proof}

\section{Proof of Theorem \ref{theorem.Gaussian_ARCOEF}}
\begin{proof}[Proof of Theorem \ref{theorem.Gaussian_ARCOEF}]
According to \eqref{eq.cov}, $\vert\sigma_{j}\vert\leq C$ for a constant $C$. For $p = O(1)$ and the smallest eigenvalue of $\Sigma$ is greater than $c > 0$,
\begin{equation}
\begin{aligned}
\sum_{j = 0}^p b^2_{ij}\leq (p+1)\max_{j = 0,...,p} (\mathbf{e}_i^T\mathbf{b}_j)^2
\leq 2\max_{j = 1,...,p}(\mathbf{e}_i^T\Sigma^{-1}\mathbf{e}_j)^2 + 2\max_{j = 0,...,p - 1}(\mathbf{e}_i^T\Sigma^{-1}T_j\Sigma^{-1}\gamma)^2\\
\leq C\ \text{for a constant $C$}
\end{aligned}
\label{eq.conditionB}
\end{equation}
From theorem \ref{lemma.Gaussian}, we have
\begin{equation}
\sup_{x\in\mathbf{R}}\vert Prob\left(\max_{j = 1,...,p}\vert\frac{1}{\sqrt{T}}\sum_{i = 1}^T Z_{i, j }\vert\leq x\right) - Prob\left(\max_{j = 1,...,p}\vert\xi_j\vert\leq x\right)\vert
= o(1)
\end{equation}
From section 0.9.7 in \cite{MR2978290} and Lemma \ref{lemma.linear_comb}
\begin{equation}
\begin{aligned}
\vert\widehat{\Sigma} - \Sigma\vert_2\leq 2\sum_{k  = 0}^{p - 1}\vert\widehat{\sigma}_k - \sigma_k\vert
\leq \frac{2}{T}\sum_{k = 0}^{p - 1} k\vert\sigma_k\vert + \frac{2}{T}\sum_{k = 0}^{p - 1}\vert\sum_{i = k + 1}^T(X_iX_{i - k} - \sigma_k)\vert
= O_p(1/\sqrt{T})\\
\text{Similarly}\  \vert\widehat{\gamma} - \gamma\vert_2\leq \sqrt{p}\times \max_{k = 1,...,p}\vert\widehat{\sigma}_k - \sigma_k\vert = O_p(1/\sqrt{T})
\end{aligned}
\label{eq.diff_Covariance_Matrix}
\end{equation}
so with probability tending to $1$ $\widehat{\Sigma}$ is non-singular and $\vert\Sigma^{-1}\vert_2\times \vert\widehat{\Sigma} - \Sigma\vert_2 < 1/2$. From corollary 5.6.16 in \cite{MR2978290},
\begin{equation}
\begin{aligned}
\vert\widehat{\Sigma}^{\dagger} - \Sigma^{-1}  + \Sigma^{-1}(\widehat{\Sigma} - \Sigma)\Sigma^{-1}\vert_2
\leq \left(\sum_{k = 2}^\infty \vert\Sigma^{-1}\vert_2^k\times \vert\widehat{\Sigma} - \Sigma\vert_2^k \right)\times \vert\Sigma^{-1}\vert_2\leq C\vert\widehat{\Sigma} - \Sigma\vert^2_2\\
\text{and } \vert \widehat{\Sigma}^{\dagger} - \Sigma^{-1}  \vert_2\leq \vert \Sigma^{-1}(\widehat{\Sigma} - \Sigma)\Sigma^{-1}\vert_2 +C \vert \widehat{\Sigma} - \Sigma\vert_2^2 = O_p(1/\sqrt{T})
\end{aligned}
\label{eq.diff_Covariance_Inverse}
\end{equation}
Recall $a = (a_1,...,a_p)^T = \Sigma^{-1}\gamma$, then
\begin{equation}
\begin{aligned}
\vert (\widehat{a} - a) -  \left(\Sigma^{-1}(\widehat{\gamma} - \gamma) - \Sigma^{-1}(\widehat{\Sigma} - \Sigma) \Sigma^{-1}\gamma\right)\vert_\infty\\
\leq \vert\widehat{\Sigma}^{-1} - \Sigma^{-1}\vert_2\times \vert\widehat{\gamma} - \gamma\vert_2
 + \vert\widehat{\Sigma}^{\dagger} - \Sigma^{-1}  + \Sigma^{-1}(\widehat{\Sigma} - \Sigma)\Sigma^{-1}\vert_2\times \vert\gamma\vert_2 = O_p(1/T)
\end{aligned}
\end{equation}
Since
\begin{equation}
\begin{aligned}
\Sigma^{-1}(\widehat{\gamma} - \gamma) - \Sigma^{-1}(\widehat{\Sigma} - \Sigma)\Sigma^{-1}\gamma\\
= \sum_{k = 1}^p (\widehat{\sigma}_k - \sigma_k)\Sigma^{-1}\mathbf{e}_k
- \sum_{k = 0}^{p - 1}(\widehat{\sigma}_k - \sigma_k)\Sigma^{-1}T_k\Sigma^{-1}\gamma = \sum_{k = 0}^p (\widehat{\sigma}_k - \sigma_k)\mathbf{b}_k\\
\text{and } \vert\frac{1}{T}\sum_{i = 1}^T Z_{i,j} - \sum_{k = 0}^p b_{jk}(\widehat{\sigma}_k - \sigma_k)\vert
= \vert\frac{1}{T}\sum_{k = 0}^p b_{jk}\left(\sum_{i = 1}^k X_{i}X_{i - k}\right)\vert  =O_p(1/T)
\end{aligned}
\end{equation}
From Lemma \ref{lemma.normal_property}, for any given $\delta>0$ and sufficiently large $T$,
\begin{equation}
\begin{aligned}
Prob\left(\max_{j = 1,...,p}\vert\sqrt{T}(\widehat{a}_j - a_j)\vert\leq x\right)
\leq \delta + Prob\left(\max_{j = 1,...,p}\vert\frac{1}{\sqrt{T}}\sum_{i = 1}^TZ_{i, j}\vert\leq x + \delta\right)\\
\leq 2\delta + Prob\left(\max_{j = 1,...,p}\vert\xi_j\vert\leq x\right) + C\delta\times(1 + \sqrt{\log(p)} + \sqrt{\vert\log(\delta)\vert})\\
\text{and } Prob\left(\max_{j = 1,...,p}\vert\sqrt{T}(\widehat{a}_j - a_j)\vert\leq x\right)\geq -\delta
+ Prob\left(\max_{j = 1,...,p}\vert\frac{1}{\sqrt{T}}\sum_{i = 1}^TZ_{i, j}\vert\leq x - \delta\right)\\
\geq -2\delta + Prob\left(\max_{j = 1,...,p}\vert\xi_j\vert\leq x\right)  - C\delta\times(1 + \sqrt{\log(p)} + \sqrt{\vert\log(\delta)\vert})
\end{aligned}
\end{equation}
and we prove \eqref{eq.auto_coeff}.
\end{proof}

\section{Proof of results of Section \ref{section.bootstrap}}
\begin{proof}[Proof of Lemma \ref{lemma.consistent_variance}]
(i). From Lemma \ref{lemma.covariance}
\begin{equation}
\begin{aligned}
\max_{j_1, j_2 \in\mathcal{H}}\vert \frac{1}{T}\sum_{i_1 = 1}^T\sum_{i_2 = 1}^TK\left(\frac{i_1 - i_2}{k_T}\right)(X_{i_1}X_{i_1 - j_1} - \sigma_{j_1})(X_{i_2}X_{i_2 - j_2}  - \sigma_{j_2})\\
- \frac{1}{T}\sum_{i_1 = 1}^T\sum_{i_2 = 1}^T\mathbf{E} (X_{i_1}X_{i_1 - j_1} - \sigma_{j_1}) (X_{i_2}X_{i_2 - j_2} - \sigma_{j_2})
\vert\\
= O_p\left(v_T\times T^{\alpha_X\beta_X} + k_T\times T^{\frac{8\beta_X}{m} + \alpha_X\beta_X - \frac{1}{2}}\right)
\end{aligned}
\end{equation}
On the other hand,
\begin{equation}
\begin{aligned}
\vert \frac{1}{T}\sum_{i_1 = j_1 + 1}^T\sum_{i_2 = j_2 + 1}^TK\left(\frac{i_1 - i_2}{k_T}\right)(X_{i_1}X_{i_1 - j_1} - \widehat{\sigma}_{j_1})(X_{i_2}X_{i_2 - j_2}  - \widehat{\sigma}_{j_2})\\
-  \frac{1}{T}\sum_{i_1 = 1}^T\sum_{i_2 = 1}^TK\left(\frac{i_1 - i_2}{k_T}\right)(X_{i_1}X_{i_1 - j_1} - \sigma_{j_1})(X_{i_2}X_{i_2 - j_2}  - \sigma_{j_2})
\vert\\
\leq \frac{\vert \widehat{\sigma}_{j_2} - \sigma_{j_2}\vert}{T}\vert\sum_{i_1 = j_1 + 1}^T\sum_{i_2 = j_2 + 1}^T K\left(\frac{i_1 - i_2}{k_T}\right)(X_{i_1}X_{i_1 - j_1} - \sigma_{j_1})\vert\\
+ \frac{\vert\widehat{\sigma}_{j_1} - \sigma_{j_1}\vert}{T}\vert\sum_{i_1 = j_1 + 1}^T\sum_{i_2 = j_2 + 1}^T K\left(\frac{i_1 - i_2}{k_T}\right)(X_{i_2}X_{i_2 - j_2} - \sigma_{j_2})\vert \\
+ \frac{\vert(\widehat{\sigma}_{j_1} - \sigma_{j_1})\times (\widehat{\sigma}_{j_2} - \sigma_{j_2})\vert}{T}\sum_{i_1 = j_1 + 1}^T\sum_{i_2 = j_2 + 1}^T K\left(\frac{i_1 - i_2}{k_T}\right)\\
+ \frac{1}{T}\vert\sum_{i_1 = 1}^{j_1} \sum_{i_2 = 1}^{j_2} K\left(\frac{i_1 - i_2}{k_T}\right)
(X_{i_1}X_{i_1 - j_1} - \sigma_{j_1})(X_{i_2}X_{i_2 - j_2}  - \sigma_{j_2})\vert\\
+ \frac{1}{T}\vert\sum_{i_1 = 1}^{j_1} \sum_{i_2 = j_2 + 1}^T K\left(\frac{i_1 - i_2}{k_T}\right)
(X_{i_1}X_{i_1 - j_1} - \sigma_{j_1})(X_{i_2}X_{i_2 - j_2}  - \sigma_{j_2})\vert\\
+ \frac{1}{T}\vert\sum_{i_1 = j_1 + 1}^T\sum_{i_2 = 1}^{j_2} K\left(\frac{i_1 - i_2}{k_T}\right)
(X_{i_1}X_{i_1 - j_1} - \sigma_{j_1})(X_{i_2}X_{i_2 - j_2}  - \sigma_{j_2})\vert
\end{aligned}
\end{equation}
For $X_iX_{i - j} - \sigma_j$ are $(\frac{m}{2}, \alpha_X - 1, \alpha_X\beta_X)$ - medium range dependent random variables,
from Lemma \ref{lemma.linear_comb},
\begin{equation}
\begin{aligned}
\Vert\ \sum_{i_1 = j_1 + 1}^T\left(\sum_{i_2 = j_2 + 1}^T K\left(\frac{i_1 - i_2}{k_T}\right)\right)(X_{i_1}X_{i_1 - j_1} - \sigma_{j_1})\ \Vert_{m/2}\\
\leq CT^{\alpha_X\beta_X}\times \sqrt{\sum_{i_1 = j_1 + 1}^T\left(\sum_{i_2 = j_2 + 1}^T K\left(\frac{i_1 - i_2}{k_T}\right)\right)^2}
\end{aligned}
\end{equation}
For
\begin{equation}
\begin{aligned}
\sum_{i_2 = 1}^T K\left(\frac{i_1 - i_2}{k_T}\right)\leq 2\sum_{s = 0}^\infty K\left(\frac{s}{k_T}\right)\leq 1 + \int_{[0,\infty)} K(\frac{x}{k_T}) dx = O(k_T)
\end{aligned}
\end{equation}
we have
\begin{equation}
\begin{aligned}
\Vert\ \sum_{i_1 = j_1 + 1}^T\left(\sum_{i_2 = j_2 + 1}^T K\left(\frac{i_1 - i_2}{k_T}\right)\right)(X_{i_1}X_{i_1 - j_1} - \sigma_{j_1})\ \Vert_{m/2}
\leq Ck_T\times T^{\alpha_X\beta_X+\frac{1}{2}}
\end{aligned}
\end{equation}
so from \eqref{eq.maxAUTO}
\begin{equation}
\begin{aligned}
\max_{j_1,j_2\in\mathcal{H}}
\frac{\vert \widehat{\sigma}_{j_2} - \sigma_{j_2}\vert}{T}\vert\sum_{i_1 = j_1 + 1}^T
\sum_{i_2 = j_2 + 1}^T K\left(\frac{i_1 - i_2}{k_T}\right)(X_{i_1}X_{i_1 - j_1} - \sigma_{j_1})\vert\\
\leq \frac{\max_{j\in\mathcal{H}}\vert \widehat{\sigma}_j - \sigma_j\vert}{T}
\times \max_{j_1, j_2\in\mathcal{H}}\vert \sum_{i_1 = j_1 + 1}^T\left(\sum_{i_2 = j_2 + 1}^T K\left(\frac{i_1 - i_2}{k_T}\right)\right)(X_{i_1}X_{i_1 - j_1} - \sigma_{j_1})\vert\\
= O_p\left(k_T\times T^{2\alpha_X\beta_X + \frac{6\beta_X}{m} - 1}   \right)
\end{aligned}
\end{equation}
Also from Section 0.9.7 in \cite{MR2978290}
\begin{equation}
\begin{aligned}
\sum_{i_1 = j_1 + 1}^T\sum_{i_2 = j_2 + 1}^T K\left(\frac{i_1 - i_2}{k_T}\right)\leq Ck_T\times T\\
\Rightarrow \max_{j_1,j_2\in\mathcal{H}}\frac{\vert(\widehat{\sigma}_{j_1} - \sigma_{j_1})\times (\widehat{\sigma}_{j_2} - \sigma_{j_2})\vert}{T}
\sum_{i_1 = j_1 + 1}^T\sum_{i_2 = j_2 + 1}^T K\left(\frac{i_1 - i_2}{k_T}\right)\\
= O_p\left(k_T\times T^{2\alpha_X\beta_X + \frac{4\beta_X}{m} - 1}\right)
\end{aligned}
\end{equation}
For
\begin{equation}
\begin{aligned}
\Vert\ \sum_{i_1 = 1}^{j_1} \sum_{i_2 = j_2 + 1}^T K\left(\frac{i_1 - i_2}{k_T}\right)
(X_{i_1}X_{i_1 - j_1} - \sigma_{j_1})(X_{i_2}X_{i_2 - j_2}  - \sigma_{j_2})\ \Vert_{m/4}\\
\leq \sum_{i_1 = 1}^{j_1}\sum_{i_2 = j_2 + 1}^T K\left(\frac{i_1 - i_2}{k_T}\right)\times \Vert X_{i_1}X_{i_1 - j_1} - \sigma_{j_1}\Vert_{m/2}\times \Vert X_{i_2}X_{i_2 - j_2}  - \sigma_{j_2}\Vert_{m/2}\\
\leq Ck_T\times \sqrt{T}\times \sqrt{j_1}\leq C^\prime k_T\times T^{\frac{1}{2} + \frac{\beta_X}{2}}\\
\Rightarrow \max_{j_1, j_2 \in\mathcal{H}}\frac{1}{T}\vert\sum_{i_1 = 1}^{j_1} \sum_{i_2 = j_2 + 1}^T K\left(\frac{i_1 - i_2}{k_T}\right)
(X_{i_1}X_{i_1 - j_1} - \sigma_{j_1})(X_{i_2}X_{i_2 - j_2}  - \sigma_{j_2})\vert\\
= O_p\left(k_T\times T^{\frac{8\beta_X}{m} + \frac{\beta_X}{2} - \frac{1}{2}}\right)
\end{aligned}
\end{equation}
and
\begin{equation}
\begin{aligned}
\Vert\ \sum_{i_1 = 1}^{j_1} \sum_{i_2 = 1}^{j_2} K\left(\frac{i_1 - i_2}{k_T}\right)
(X_{i_1}X_{i_1 - j_1} - \sigma_{j_1})(X_{i_2}X_{i_2 - j_2}  - \sigma_{j_2})\ \Vert_{m/4}\\
\leq \sum_{i_1 = 1}^{j_1}\sum_{i_2 = 1}^{j_2} K\left(\frac{i_1 - i_2}{k_T}\right)\times \Vert X_{i_1}X_{i_1 - j_1} - \sigma_{j_1}\Vert_{m/2}
\times\Vert X_{i_2}X_{i_2 - j_2}  - \sigma_{j_2}\Vert_{m/2}\\
\leq Ck_T\sqrt{j_1\times j_2}\leq C^\prime k_T\times T^{\beta_X}\\
\Rightarrow \max_{j_1, j_2 \in\mathcal{H}}\frac{1}{T}\vert\sum_{i_1 = 1}^{j_1} \sum_{i_2 = j_2 + 1}^T K\left(\frac{i_1 - i_2}{k_T}\right)
(X_{i_1}X_{i_1 - j_1} - \sigma_{j_1})(X_{i_2}X_{i_2 - j_2}  - \sigma_{j_2})\vert\\
= O_p\left(k_T\times T^{\frac{8\beta_X}{m} + \beta_X - 1}\right)
\end{aligned}
\end{equation}
From \eqref{eq.condH}, we prove \eqref{eq.sigmaX}.

(ii). From Lemma \ref{lemma.covariance},
\begin{equation}
\begin{aligned}
\max_{j_1, j_2\in\mathcal{I}}\vert \frac{1}{T}\sum_{i_1 = 1}^T\sum_{i_2 = 1}^TK\left(\frac{i_1 - i_2}{k_T}\right)Z_{i_1,j_1}Z_{i_2, j_2}
 - \frac{1}{T}\sum_{i_1 = 1}^T\sum_{i_2 = 1}^T\mathbf{E} Z_{i_1,j_1}Z_{i_2,j_2}
\vert\\
= O_p\left(v_T\times T^{\alpha_X\beta_X} + k_T\times T^{\frac{8\beta_X}{m} + \alpha_X\beta_X - \frac{1}{2}}\right)
\end{aligned}
\end{equation}
On the other hand,
\begin{equation}
\begin{aligned}
\vert \sum_{i_1 = j_1 + 1}^T\sum_{i_2 = j_2 + 1}^TK\left(\frac{i_1 - i_2}{k_T}\right)\widehat{Z}_{i_1, j_1}\widehat{Z}_{i_2, j_2} - \sum_{i_1 = 1}^T\sum_{i_2 = 1}^TK\left(\frac{i_1 - i_2}{k_T}\right)Z_{i_1,j_1}Z_{i_2, j_2}
\vert\\
\leq \vert\sum_{i_1 = 1}^{j_1}\sum_{i_2 = 1}^{j_2} K\left(\frac{i_1 - i_2}{k_T}\right)Z_{i_1, j_1}Z_{i_2, j_2}\vert
+ \vert\sum_{i_1 = j_1 + 1}^T\sum_{i_2 = 1}^{j_2}K\left(\frac{i_1 - i_2}{k_T}\right)Z_{i_1, j_1}Z_{i_2, j_2}\vert\\
+ \vert\sum_{i_1 = 1}^{j_1}\sum_{i_2 = j_2 + 1}^T K\left(\frac{i_1 - i_2}{k_T}\right)Z_{i_1, j_1}Z_{i_2, j_2}\vert\\
+ \vert\sum_{i_1 = j_1 + 1}^T\sum_{i_2 = j_2 + 1}^T K\left(\frac{i_1 - i_2}{k_T}\right)Z_{i_1, j_1}(\widehat{Z}_{i_2, j_2} - Z_{i_2, j_2})\vert\\
+ \vert\sum_{i_1 = j_1 + 1}^T\sum_{i_2 = j_2 + 1}^T K\left(\frac{i_1 - i_2}{k_T}\right)Z_{i_2, j_2}(\widehat{Z}_{i_1, j_1} - Z_{i_1, j_1})\vert\\
+ \vert\sum_{i_1 = j_1 + 1}^T\sum_{i_2 = j_2 + 1}^T K\left(\frac{i_1 - i_2}{k_T}\right)(\widehat{Z}_{i_2, j_2} - Z_{i_2, j_2})(\widehat{Z}_{i_1, j_1} - Z_{i_1, j_1})\vert
\end{aligned}
\label{eq.decomposition_K}
\end{equation}
For
\begin{equation}
\begin{aligned}
\vert\widehat{Z}_{i, j} - Z_{i, j}\vert\leq \frac{\vert\widehat{\sigma}_j\vert}{\widehat{\sigma}_0^2}\times \vert\widehat{\sigma}_0 - \sigma_0\vert
+ \frac{1}{\widehat{\sigma}_0}\vert\widehat{\sigma}_j - \sigma_j\vert
+ \vert \frac{\widehat{\sigma}_j}{\widehat{\sigma}^2_0} - \frac{\sigma_j}{\sigma^2_0}\vert \times \vert X^2_i - \sigma_0\vert\\
+ \vert \frac{1}{\widehat{\sigma}_0} - \frac{1}{\sigma_0}\vert \times \vert X_iX_{i - j} - \sigma_j\vert
\end{aligned}
\end{equation}
From Section 0.9.7 in \cite{MR2978290}
\begin{equation}
\begin{aligned}
\max_{j_1, j_2\in\mathcal{I}}\vert\sum_{i_1 = j_1 + 1}^T\sum_{i_2 = j_2 + 1}^T K\left(\frac{i_1 - i_2}{k_T}\right)Z_{i_1, j_1}(\widehat{Z}_{i_2, j_2} - Z_{i_2, j_2})\vert\\
\leq 2\sum_{s = 0}^\infty K\left(\frac{s}{k_T}\right)\times \max_{j\in\mathcal{I}}\sqrt{\sum_{i = j + 1}^T Z^2_{i, j}}
\times \max_{j\in\mathcal{I}}\sqrt{\sum_{i = j + 1}^T (\widehat{Z}_{i,j} - Z_{i, j})^2}\\
\leq Ck_T\times \max_{j\in\mathcal{I}}\sqrt{\sum_{i = 1}^T Z^2_{i, j}}
\times\max_{j\in\mathcal{I}}[\sqrt{T} \frac{\vert\widehat{\sigma}_j\vert\times \vert\widehat{\sigma}_0 - \sigma_0\vert}{\widehat{\sigma}^2_0}
+ \sqrt{T}\frac{\vert\widehat{\sigma}_j - \sigma_j\vert}{\widehat{\sigma}_0}\\
 + \vert \frac{\widehat{\sigma}_j}{\widehat{\sigma}^2_0} - \frac{\sigma_j}{\sigma^2_0}\vert \times \sqrt{\sum_{i = 1}^T (X^2_i - \sigma_0)^2}
+  \vert\frac{1}{\widehat{\sigma}_0} - \frac{1}{\sigma_0}\vert\times \sqrt{\sum_{i = 1}^T (X_iX_{i - j} - \sigma_j)^2}]
\end{aligned}
\label{eq.decomposite_K}
\end{equation}
For $\Vert\sum_{i = 1}^T Z^2_{i,j}\Vert_{m/4}\leq \sum_{i = 1}^T \Vert Z_{i,j}\Vert_{m/2}^2\leq CT$ for a constant $C$, we have
$\max_{j\in\mathcal{I}}\sqrt{\sum_{i = 1}^TZ^2_{i, j}} = O_p\left(T^{\frac{4\beta_X}{m} + \frac{1}{2}}\right)$. Similarly,
$\Vert\sum_{i = 1}^T (X_i^2 - \sigma_0)^2\Vert_{m/4}\leq \sum_{i = 1}^T \Vert X^2_i - \sigma_0\Vert_{m/2}^2\leq CT$ implies
$ \sqrt{\sum_{i = 1}^T (X^2_i - \sigma_0)^2} = O_p(\sqrt{T})$;
$\Vert\sum_{i  = 1}^T (X_iX_{i - j} - \sigma_j)^2\Vert_{m/4}\leq \sum_{i = 1}^T \Vert X_iX_{i  -j} - \sigma_j\Vert_{m/2}^2\leq CT$
implies $\max_{j\in\mathcal{I}}\sqrt{\sum_{i = 1}^T (X_iX_{i - j} - \sigma_j)^2} = O_p\left(T^{\frac{4\beta_X}{m} + \frac{1}{2}}\right)$.
From \eqref{eq.minVARCOEF}, \eqref{eq.size_sigma} and \eqref{eq.maxAUTO},  $\widehat{\sigma}_0 > c/2$ with probability tending to $1$  , and
\begin{equation}
\begin{aligned}
\max_{j\in\mathcal{I}}\vert\widehat{\sigma}_j\vert\times \frac{\vert\widehat{\sigma}_0 - \sigma_0\vert}{\widehat{\sigma}^2_0}
\leq \frac{4\vert \widehat{\sigma}_0 - \sigma_0\vert}{c^2}\times \left(\max_{j\in\mathcal{I}}\vert\sigma_j\vert
 + \max_{j\in\mathcal{I}}\vert\widehat{\sigma}_j - \sigma_j\vert\right) = O_p\left(T^{\alpha_X\beta_X + \frac{2\beta_X}{m} - \frac{1}{2}}\right)\\
\max_{j\in\mathcal{I}}\frac{\vert\widehat{\sigma}_j - \sigma_j\vert}{\widehat{\sigma}_0}\leq \frac{2}{c}\max_{j\in\mathcal{I}}
\vert\widehat{\sigma}_j - \sigma_j\vert = O_p\left(T^{\alpha_X\beta_X + \frac{2\beta_X}{m} - \frac{1}{2}}\right)\\
\max_{j\in\mathcal{I}}\vert \frac{\widehat{\sigma}_j}{\widehat{\sigma}^2_0} - \frac{\sigma_j}{\sigma^2_0}\vert
\leq \frac{4}{c^2}\max_{j\in\mathcal{I}}\vert\widehat{\sigma}_j - \sigma_j\vert + \max_{j\in\mathcal{I}}\vert\sigma_j\vert
\times \frac{4\vert\sigma^2_0 - \widehat{\sigma}^2_0\vert}{\sigma^2_0\times c^2}
= O_p\left(T^{\alpha_X\beta_X + \frac{2\beta_X}{m} - \frac{1}{2}}\right)\\
\vert\frac{1}{\widehat{\sigma}_0} - \frac{1}{\sigma_0}\vert\leq \frac{2\vert \widehat{\sigma}_0 - \sigma_0\vert}{c\sigma_0}
 = O_p\left(T^{\alpha_X\beta_X  - \frac{1}{2}}\right)\\
\end{aligned}
\end{equation}
we have
\begin{equation}
\begin{aligned}
\max_{j\in\mathcal{I}}\sqrt{\sum_{i = j + 1}^T (\widehat{Z}_{i,j} - Z_{i, j})^2} = O_p\left(T^{\alpha_X\beta_X + \frac{
4\beta_X}{m}}\right)
\end{aligned}
\end{equation}
and
\begin{equation}
\begin{aligned}
\max_{j_1, j_2\in\mathcal{I}}\vert\sum_{i_1 = j_1 + 1}^T\sum_{i_2 = j_2 + 1}^T K\left(\frac{i_1 - i_2}{k_T}\right)Z_{i_1, j_1}(\widehat{Z}_{i_2, j_2} - Z_{i_2, j_2})\vert\\
=O_p\left(k_T\times T^{\alpha_X\beta_X + \frac{8\beta_X}{m} + \frac{1}{2}}\right)
\end{aligned}
\end{equation}
Similar to \eqref{eq.decomposite_K},
\begin{equation}
\begin{aligned}
\max_{j_1,j_2\in\mathcal{I}}\vert\sum_{i_1 = j_1 + 1}^T\sum_{i_2 = j_2 + 1}^T K\left(\frac{i_1 - i_2}{k_T}\right)(\widehat{Z}_{i_2, j_2} - Z_{i_2, j_2})(\widehat{Z}_{i_1, j_1} - Z_{i_1, j_1})\vert\\
\leq C\sum_{s = 0}^\infty K\left(\frac{s}{k_T}\right)\times \max_{j\in\mathcal{I}}\left(\sum_{i = 1}^T (\widehat{Z}_{i, j} - Z_{i,j})^2\right)
= O_p\left(k_T\times T^{2\alpha_X\beta_X + \frac{8\beta_X}{m}}\right)
\end{aligned}
\label{eq.eqsquare}
\end{equation}
For
\begin{equation}
\begin{aligned}
\Vert\ \sum_{i_1 = 1}^{j_1}\sum_{i_2 = j_2 + 1}^T K\left(\frac{i_1 - i_2}{k_T}\right)Z_{i_1, j_1}Z_{i_2, j_2}\ \Vert_{m/4}\\
\leq \sum_{i_1 = 1}^{j_1}\sum_{i_2 = j_2 + 1}^T K\left(\frac{i_1 - i_2}{k_T}\right)\Vert Z_{i_1, j_1}\Vert_{m/2}\times \Vert Z_{i_2, j_2}\Vert_{m/2}
\leq Ck_T\times \sqrt{j_1}\times \sqrt{T}\\
\Rightarrow \max_{j_1,j_2\in\mathcal{I}}\vert\sum_{i_1 = 1}^{j_1}\sum_{i_2 = j_2 + 1}^T K\left(\frac{i_1 - i_2}{k_T}\right)
Z_{i_1, j_1}Z_{i_2, j_2}\vert = O_p\left(k_T\times T^{\frac{8\beta_X}{m} + \frac{1}{2} + \frac{\beta_X}{2}}\right)
\end{aligned}
\label{eq.upp}
\end{equation}
and
\begin{equation}
\begin{aligned}
\Vert\ \sum_{i_1 = 1}^{j_1}\sum_{i_2 = 1}^{j_2} K\left(\frac{i_1 - i_2}{k_T}\right)Z_{i_1, j_1}Z_{i_2, j_2}\ \Vert_{m/4}\\
\leq \sum_{i_1 =  1}^{j_1}\sum_{i_2  = 1}^{j_2}K\left(\frac{i_1 - i_2}{k_T}\right)\Vert Z_{i_1, j_1}\Vert_{m/2}\times \Vert Z_{i_2, j_2}\Vert_{m/2}
\leq Ck_T\times \sqrt{j_1j_2}\\
\Rightarrow \max_{j_1,j_2\in\mathcal{I}}\vert \sum_{i_1 = 1}^{j_1}\sum_{i_2 = 1}^{j_2} K\left(\frac{i_1 - i_2}{k_T}\right)Z_{i_1, j_1}Z_{i_2, j_2}\vert
= O_p\left(k_T\times T^{\frac{8\beta_X}{m} + \beta_X}\right)
\end{aligned}
\label{eq.upp2}
\end{equation}
so
\begin{equation}
\begin{aligned}
\max_{j_1,j_2\in\mathcal{I}}\vert \frac{1}{T}\sum_{i_1 = j_1 + 1}^T\sum_{i_2 = j_2 + 1}^TK\left(\frac{i_1 - i_2}{k_T}\right)\widehat{Z}_{i_1, j_1}\widehat{Z}_{i_2, j_2} - \frac{1}{T}\sum_{i_1 = 1}^T\sum_{i_2 = 1}^TK\left(\frac{i_1 - i_2}{k_T}\right)Z_{i_1,j_1}Z_{i_2, j_2}
\vert\\
= O_p\left(k_T\times T^{\frac{8\beta_X}{m} + \alpha_X\beta_X - \frac{1}{2}}\right)
\end{aligned}
\end{equation}
and we prove \eqref{eq.sigmaCorr}.

(iii). Define $Z_{i,j}$ as in \eqref{eq.def_ARcoef}. From Lemma \ref{lemma.covariance} and \eqref{eq.conditionB},
\begin{equation}
\begin{aligned}
\max_{j_1, j_2 = 1,...,d}\vert \frac{1}{T}\sum_{i_1 = 1}^T\sum_{i_2 = 1}^T Z_{i_1, j_1}Z_{i_2, j_2}K\left(\frac{i_1 - i_2}{k_T}\right)
 - \frac{1}{T}\sum_{i_1 = 1}^T\sum_{i_2 = 1}^T\mathbf{E}Z_{i_1, j_1}Z_{i_2, j_2}\vert\\
= O_p(v_T + k_T\times T^{ - \frac{1}{2}})
\end{aligned}
\end{equation}
From \eqref{eq.size_sigma} and \eqref{eq.maxAUTO}, we have $\max_{j = 0,...,d}\vert\widehat{\sigma}_j - \sigma_j\vert = O_p(1/\sqrt{T})$. From
\eqref{eq.diff_Covariance_Matrix} and \eqref{eq.diff_Covariance_Inverse}, we have  $\vert\widehat{\Sigma} - \Sigma\vert_2 = O_p(1/\sqrt{T})$,
$\vert\widehat{\gamma} - \gamma\vert_2 = O_p(1/\sqrt{T})$ and $\vert \widehat{\Sigma}^\dagger - \Sigma^{-1}\vert_2 = O_p(1/\sqrt{T})$.
Correspondingly for $j = 1,...,d - 1$, set $\Delta = \widehat{\Sigma}^\dagger - \Sigma^{-1}$ and $\delta = \widehat{\gamma} - \gamma$, we have
\begin{equation}
\begin{aligned}
\vert\widehat{\mathbf{b}}_j - \mathbf{b}_j\vert_2\leq \vert\Delta\vert_2 + \vert \Delta T_j \Sigma^{-1}\gamma\vert_2 + \vert \Sigma^{-1}T_j\Delta \gamma\vert_2 + \vert \Delta T_j \Delta \gamma \vert_2\\
+ \vert \Sigma^{-1} T_j \Sigma^{-1}\delta\vert_2 + \vert \Delta T_j \Sigma^{-1}\delta\vert_2 + \vert \Sigma^{-1} T_j \Delta \delta\vert_2
+ \vert \Delta T_j \Delta \delta\vert_2 = O_p(1/\sqrt{T})\\
\text{and } \vert\widehat{\mathbf{b}}_0 - \mathbf{b}_0\vert_2\leq \vert\Delta \Sigma^{-1}\gamma\vert_2 + \vert\Sigma^{-1}\Delta\gamma\vert_2 + \vert \Delta^2\gamma\vert_2 + \vert\Sigma^{-2}\delta\vert_2 + \vert\Delta\Sigma^{-1}\delta\vert_2\\
+ \vert\Sigma^{-1}\Delta\delta\vert_2 + \vert \Delta^2\delta\vert_2
=O_p(1/\sqrt{T})\\
\vert\widehat{\mathbf{b}}_d - \mathbf{b}_d \vert_2\leq \vert\Delta\vert_2 = O_p(1/\sqrt{T})
\end{aligned}
\label{eq.delta_beta}
\end{equation}
Since
\begin{equation}
\begin{aligned}
\vert\widehat{Z}_{i,j} - Z_{i,j}\vert\leq \sum_{k = 0}^d\vert \widehat{b}_{jk}\vert\times \vert \widehat{\sigma}_k - \sigma_k\vert
+\sum_{k = 0}^d \vert\widehat{b}_{jk} - b_{jk}\vert\times \vert X_iX_{i - k} - \sigma_k\vert\\
\leq \max_{k = 0,...,d}\vert \widehat{b}_{jk}\vert\times \sum_{k = 0}^d\vert \widehat{\sigma}_k - \sigma_k\vert + \max_{k = 0,...,d}\vert \widehat{b}_{jk} - b_{jk}\vert\sum_{k = 0}^d \vert X_iX_{i - k} - \sigma_k\vert\\
\leq \max_{k = 0,...,d}\vert\widehat{\mathbf{b}}_k\vert_2\times \sum_{k = 0}^d\vert \widehat{\sigma}_k - \sigma_k\vert
+ \max_{k = 0,...,d}\vert \widehat{\mathbf{b}}_{k} - \mathbf{b}_{k}\vert_2\sum_{k = 0}^d \vert X_iX_{i - k} - \sigma_k\vert
\end{aligned}
\end{equation}
From \eqref{eq.decomposition_K}, \eqref{eq.decomposite_K} and \eqref{eq.eqsquare}, notice that
$\Vert\sum_{i = 1}^T Z_{i,j}^2\Vert_{m/4}\leq \sum_{i = 1}^T \Vert Z_{i,j}\Vert_{m/2}^2\leq CT$, $\Vert \sum_{i = 1}^T(X_iX_{i - k} - \sigma_k) ^2\Vert_{m/4}\leq \sum_{i = 1}^T \Vert X_iX_{i - k} - \sigma_k\Vert_{m/2}^2\leq CT$, and
\begin{equation}
\begin{aligned}
\sum_{i = 1}^T \vert\widehat{Z}_{i,j} - Z_{i,j}\vert^2\leq
2\max_{k = 0,...,d}\vert\widehat{\mathbf{b}}_k\vert_2^2\times \sum_{i = 1}^T \left(\sum_{k = 0}^d\vert \widehat{\sigma}_k - \sigma_k\vert\right)^2\\
+ 2\max_{k = 0,...,d}\vert \widehat{\mathbf{b}}_{k} - \mathbf{b}_{k}\vert_2^2\sum_{i = 1}^T\left( \sum_{k = 0}^d \vert X_iX_{i - k} - \sigma_k\vert\right)^2\\
\leq 2(d + 1)\max_{k = 0,...,d}\vert\widehat{\mathbf{b}}_k\vert_2^2\times T\sum_{k = 0}^d \vert\widehat{\sigma}_k - \sigma_k\vert^2\\
 + 2(d + 1)\max_{k = 0,...,d}\vert \widehat{\mathbf{b}}_{k} - \mathbf{b}_{k}\vert_2^2\sum_{i = 1}^T\sum_{k = 0}^d  (X_iX_{i - k} - \sigma_k) ^2 = O_p(1)
\end{aligned}
\label{eq.zeta_minus_zeta}
\end{equation}
we have
\begin{equation}
\begin{aligned}
\vert\sum_{i_1 = j_1 + 1}^T\sum_{i_2 = j_2 + 1}^T K\left(\frac{i_1 - i_2}{k_T}\right)Z_{i_1, j_1}(\widehat{Z}_{i_2, j_2} - Z_{i_2, j_2})\vert = O_p(k_T\times \sqrt{T})\\
\text{and } \vert\sum_{i_1 = j_1 + 1}^T\sum_{i_2 = j_2 + 1}^T K\left(\frac{i_1 - i_2}{k_T}\right)(\widehat{Z}_{i_2, j_2} - Z_{i_2, j_2})(\widehat{Z}_{i_1, j_1} - Z_{i_1, j_1})\vert = O_p(k_T)
\end{aligned}
\end{equation}
For $d = O(1)$, from \eqref{eq.upp} and \eqref{eq.upp2}, we have $\vert\sum_{i_1 = 1}^{j_1}\sum_{i_2 = j_2 + 1}^T K\left(\frac{i_1 - i_2}{k_T}\right)Z_{i_1, j_1}Z_{i_2, j_2}\vert = O_p(k_T\times \sqrt{T})$ and $\vert\sum_{i_1 = 1}^{j_1}\sum_{i_2 = 1}^{j_2} K\left(\frac{i_1 - i_2}{k_T}\right)Z_{i_1, j_1}Z_{i_2, j_2}\vert = O_p(k_T)$. This implies \eqref{eq.third}.
\end{proof}

\begin{proof}[Proof of Theorem \ref{thm.bootstrap}]
(i). According to \eqref{eq.bootSigma},
\begin{equation}
\sqrt{T}\left(\widehat{\sigma}^*_j - \widehat{\sigma}_j\right) = \frac{1}{\sqrt{T}}\sum_{i = j + 1}^T \widehat{\epsilon}_i^{(j)}\varepsilon_i\
\text{for } j\in\mathcal{H}
\end{equation}
are joint normal random variables with
\begin{equation}
\mathbf{E}^*\frac{1}{\sqrt{T}}\sum_{i = j + 1}^T \widehat{\epsilon}_i^{(j)}\varepsilon_i = \frac{1}{\sqrt{T}}\sum_{i = j + 1}^T\widehat{\epsilon}_i^{(j)}\mathbf{E}\varepsilon_i = 0
\end{equation}
and
\begin{equation}
\begin{aligned}
\mathbf{E}^*\left(\frac{1}{\sqrt{T}}\sum_{i = j_1 + 1}^T \widehat{\epsilon}_i^{(j_1)}\varepsilon_i\right)\times
\left(\frac{1}{\sqrt{T}}\sum_{i = j_2 + 1}^T \widehat{\epsilon}_i^{(j_2)}\varepsilon_i\right)
= \frac{1}{T}\sum_{i_1 = j_1 + 1}^T\sum_{i_2 = j_2 + 1}^T\widehat{\epsilon}_i^{(j_1)} \widehat{\epsilon}_i^{(j_2)}\mathbf{E}^*\varepsilon_{i_1}
\varepsilon_{i_2}\\
= \frac{1}{T}\sum_{i_1 = j_1 + 1}^T\sum_{i_2 = j_2 + 1}^T(X_{i_1}X_{i_1 - j_1} - \widehat{\sigma}_{j_1}) (X_{i_2}X_{i_2 - j_2} - \widehat{\sigma}_{j_2}) K\left(\frac{i_1 - i_2}{k_T}\right)
\end{aligned}
\end{equation}
From Lemma \ref{lemma.consistent_variance}, lemma \ref{lemma.normal_property} and \eqref{eq.xi}, define
\begin{equation}
\begin{aligned}
\Delta = \max_{j_1, j_2\in\mathcal{H}}\vert
\frac{1}{T}\sum_{i_1 = j_1 + 1}^T\sum_{i_2 = j_2 + 1}^T(X_{i_1}X_{i_1 - j_1} - \widehat{\sigma}_{j_1}) (X_{i_2}X_{i_2 - j_2} - \widehat{\sigma}_{j_2}) K\left(\frac{i_1 - i_2}{k_T}\right)\\
- \frac{1}{T}\sum_{i_1 = 1}^T\sum_{i_2 = 1}^T\mathbf{E} (X_{i_1}X_{i_1 - j_1} - \sigma_{j_1}) (X_{i_2}X_{i_2 - j_2} - \sigma_{j_2})
\vert\\
= O_p\left(v_T\times T^{\alpha_X\beta_X} + k_T\times T^{\frac{8\beta_X}{m} + \alpha_X\beta_X - \frac{1}{2}}\right)
\end{aligned}
\end{equation}

 we have
\begin{equation}
\begin{aligned}
\sup_{x\in\mathbf{R}}\vert Prob^*(\sqrt{T}\max_{j\in\mathcal{H}}\vert \widehat{\sigma}^*_j - \widehat{\sigma}_j\vert\leq x) - H_\sigma(x)\vert\\
= O_p\left(T^{\alpha_X\beta_X}\Delta^{1/6} + \Delta^{1/3}T^{\alpha_X\beta_X} + \Delta^{1/3}\log^3(T) + \Delta^{1/3}T^{\alpha_X\beta_X}\sqrt{\log(T)}\right)
\end{aligned}
\end{equation}
and we prove \eqref{eq.Boot_delta_sigma}.

(ii). According to \eqref{eq.bootSigma},
\begin{equation}
\begin{aligned}
\sqrt{T}\left(\widehat{\rho}^*_j - \widehat{\rho}_j\right) = \frac{\widehat{\sigma}_0\sum_{i = j + 1}^T\widehat{\epsilon}_i^{(j)}\varepsilon_i - \widehat{\sigma}_j\sum_{i = 1}^T \widehat{\epsilon}_i^{(0)}\varepsilon_i}{\sqrt{T}\widehat{\sigma}_0\times\left(\widehat{\sigma}_0 + \frac{1}{T}\sum_{i = 1}^T
\widehat{\epsilon}_i^{(0)}\varepsilon_i\right)}
\end{aligned}
\end{equation}

Define $\Vert\Vert_m^* = \left(\mathbf{E}\vert \cdot \vert^m|X_1,...,X_T\right)^{1/m}$ as the `$m$ - norm' in the bootstrap world,
recall that $\varepsilon_i$ have joint normal distribution,
\begin{equation}
\begin{aligned}
\Vert \sum_{i = j + 1}^T \widehat{\epsilon}_i^{(j)}\varepsilon_i\Vert_{m/4}^*\leq C\Vert \sum_{i = j + 1}^T \widehat{\epsilon}_i^{(j)}\varepsilon_i\Vert_{2}^* = C\sqrt{\sum_{i_1 = j + 1}^T\sum_{i_2 = j + 1}^T \widehat{\epsilon}_{i_1}^{(j)}
\widehat{\epsilon}_{i_2}^{(j)}K\left(\frac{i_1 - i_2}{k_T}\right)}\\
\leq C^\prime\sqrt{k_T}\sqrt{\sum_{i = 1}^T \widehat{\epsilon}_i^{(j)2}}\\
\Rightarrow \Vert\  \Vert \sum_{i = j + 1}^T \widehat{\epsilon}_i^{(j)}\varepsilon_i\Vert_{m/4}^*\ \Vert_{m/2}
\leq C^\prime\sqrt{k_T}\times\sqrt{\Vert \sum_{i = 1}^T \widehat{\epsilon}_i^{(j)2}\Vert_{m/4}}
\end{aligned}
\label{eq.half_Cov}
\end{equation}
Form Lemma \ref{lemma.recognize}, $\{X_iX_{i - j}- \sigma_j\}_{i\in\mathbf{Z}}$ are $(\frac{m}{2}, \alpha - 1, \alpha_X\beta_X)$ - medium range dependent random
variables. So  from \eqref{eq.maxAUTO}
\begin{equation}
\begin{aligned}
\Vert\sum_{i = 1}^T \widehat{\epsilon}_{i}^{(j)2}\Vert_{m/4}\leq 2\Vert\sum_{i = 1}^T (X_iX_{i - j} - \sigma_j)^2\Vert_{m/4} + 2T\Vert (\sigma_j - \widehat{\sigma}_j)^2 \Vert_{m/4} \leq CT\\
\Rightarrow \max_{j\in\mathcal{I}}\Vert\sum_{i = j+1}^T
\widehat{\epsilon}_i^{(j)}\varepsilon_i\Vert^*_{m/4} = O_p(\sqrt{k_T}\times T^{\frac{1}{2} + \frac{2\beta_X}{m}})\\
\text{and }  \Vert \sum_{i = 1}^T \widehat{\epsilon}_i^{(0)}\varepsilon_i\Vert_{m/4}^* = O_p(\sqrt{k_T}\times T^{1/2})
\end{aligned}
\label{eq.epsilonNorm}
\end{equation}
For any given $\xi > 0$, we choose $\delta = C_\xi\times \sqrt{k_T}\times T^{\frac{6\beta_X}{m} - \frac{1}{2}}$ and $\delta^\prime  = C_\xi\sqrt{k_T}\times T^{-\frac{1}{2}   }$ with sufficiently large $C_\xi$, then
\begin{equation}
\begin{aligned}
Prob^*\left(\max_{j\in\mathcal{I}}\vert\frac{1}{T}\sum_{i = j + 1}^T \widehat{\epsilon}_i^{(j)}\varepsilon_i\vert > \delta\right)
\leq \frac{\sum_{j\in\mathcal{I}}\Vert  \frac{1}{T}\sum_{i = j + 1}^T \widehat{\epsilon}_i^{(j)}\varepsilon_i\Vert_{m/4}^{*m/4}}{\delta^{m/4}}
\leq \left(\frac{C^\prime}{C_\xi}\right)^{m/4}\\
\text{and   } Prob^*\left(\vert\frac{1}{T}\sum_{i =  1}^T \widehat{\epsilon}_i^{(0)}\varepsilon_i\vert > \delta^\prime\right)\leq \frac{\Vert \frac{1}{T}\sum_{i =  1}^T \widehat{\epsilon}_i^{(0)}\varepsilon_i\Vert_{m/4}^{*m/4}}{\delta^{\prime \frac{m}{4}}}\leq \left(\frac{C^\prime}{C_\xi}\right)^{m/4}
\end{aligned}
\end{equation}
with probability at least $1 - \xi$.  On the other hand, for $j\leq d$,
\begin{equation}
\begin{aligned}
\Vert \sum_{i = 1}^j \widehat{\epsilon}_i^{(0)}\varepsilon_i\Vert_{m/4}^*\leq C\Vert \sum_{i = 1}^j \widehat{\epsilon}_i^{(0)}\varepsilon_i\Vert_{2}^*
\leq C^\prime\sqrt{k_T}\times \sqrt{\sum_{i = 1}^d \widehat{\epsilon}_i^{(0)2}}\\
\Rightarrow \Vert\ \Vert \sum_{i = 1}^j \widehat{\epsilon}_i^{(0)}\varepsilon_i\Vert_{m/4}^*\ \Vert_{m/2}\leq C^\prime\sqrt{k_T}\sqrt{\Vert\sum_{i = 1}^d \widehat{\epsilon}_i^{(0)2}\Vert_{m/4}}
\end{aligned}
\end{equation}
From \eqref{eq.epsilonNorm}, define $\delta^{\prime\prime} = C_\xi\sqrt{k_T}\times T^{\frac{6\beta_X}{m} + \frac{\beta_X}{2} - \frac{1}{2}}$ with sufficiently
large $C_\xi$, we have
\begin{equation}
\begin{aligned}
\max_{j\in\mathcal{I}}\Vert\sum_{i = 1}^j
\widehat{\epsilon}_i^{(0)}\varepsilon_i\Vert^*_{m/4} = O_p(\sqrt{k_T}\times T^{\frac{2\beta_X}{m} + \frac{\beta_X}{2}})\\
\text{and  }  Prob^*\left( \max_{j\in\mathcal{I}}\vert\frac{1}{\sqrt{T}}\sum_{i = 1}^j
\widehat{\epsilon}_i^{(0)}\varepsilon_i\vert > \delta^{\prime\prime} \right)\leq \left(\frac{C^\prime}{C_\xi}\right)^{m/4}
\end{aligned}
\end{equation}
with probability at least $1 - \xi$.

In particular, define
\begin{equation}
\widehat{Z}^*_{i,j} = -\frac{\widehat{\sigma}_j}{\widehat{\sigma}^2_0}\widehat{\epsilon}^{(0)}_i\varepsilon_i
+ \frac{1}{\widehat{\sigma}_0} \widehat{\epsilon}_i^{(j)}\varepsilon_i\ \text{and } \widehat{s}^* = \frac{1}{T}\sum_{i = 1}^T
\widehat{\epsilon}_i^{(0)}\varepsilon_i
\end{equation}
and set $\delta_0 = \sqrt{T}\delta\delta^\prime + \sqrt{T}\delta^{\prime2} + \delta^{\prime\prime}$, we have for sufficiently large $T$, with probability at least $1 - \xi$
\begin{equation}
\begin{aligned}
\max_{j\in\mathcal{I}}\vert \sqrt{T}(\widehat{\rho}^*_j - \widehat{\rho}_j) - \frac{1}{\sqrt{T}}\sum_{i = j + 1}^T Z^*_{i, j}\vert
\leq \max_{j\in\mathcal{I}}\vert \frac{1}{\sqrt{T}}\sum_{i = j + 1}^T \widehat{\epsilon}_i^{(j)}\varepsilon_i\vert\times
\frac{\vert\widehat{s}^*\vert}{\widehat{\sigma}_0\times (\widehat{\sigma}_0 + \widehat{s}^*)}\\
+ \max_{j\in\mathcal{I}}\frac{1}{\sqrt{T}}\vert\sum_{i = 1}^T\widehat{\epsilon}_i^{(0)}\varepsilon_i\vert
\times \frac{\vert\widehat{\sigma}_j\vert\times \vert\widehat{s}^*\vert}{\widehat{\sigma}^2_0(\widehat{\sigma}_0 + \widehat{s}^*)}
+ \max_{j\in\mathcal{I}}\frac{1}{\sqrt{T}}\frac{\vert\widehat{\sigma}_j\vert}{\widehat{\sigma}_0^2}\vert\sum_{i = 1}^j\widehat{\epsilon}_i^{(0)}\varepsilon_i\vert\\
\Rightarrow Prob^*\left(\max_{j\in\mathcal{I}}\vert \sqrt{T}(\widehat{\rho}^*_j - \widehat{\rho}_j) - \frac{1}{\sqrt{T}}\sum_{i = j + 1}^T Z^*_{i, j}\vert > \delta_0\right)\\
\leq Prob^*\left(\frac{2\vert\widehat{s}^*\vert}{\widehat{\sigma}^2_0}\max_{j\in\mathcal{I}}\vert\frac{1}{\sqrt{T}}\sum_{i =  j + 1}^T \widehat{\epsilon}^{(j)}_i\varepsilon_i\vert > \sqrt{T}\delta\delta^\prime\right)\\
+ Prob^*\left(2\sqrt{T}\vert\widehat{s}^*\vert^2\times \max_{j\in\mathcal{I}}\frac{\vert\widehat{\sigma}_j\vert}{\widehat{\sigma}_0^3} > \sqrt{T}\delta^{\prime2}\right)\\
+ Prob^*\left(\max_{j\in\mathcal{I}}\frac{\vert\widehat{\sigma}_j\vert}{\widehat{\sigma}_0^2}\times \max_{j\in\mathcal{I}}
\frac{1}{\sqrt{T}}\vert\sum_{i = 1}^j \widehat{\epsilon}_i^{(0)}\varepsilon_i\vert > \delta^{\prime\prime}\right)
\end{aligned}
\label{eq.change_autocorrelation}
\end{equation}
For $k_T \times T^{\frac{6\beta_X}{m} - \frac{1}{2}} = o(T^{-\frac{2\beta_X}{m} - 7\alpha_X\beta_X})$  and $\sqrt{k_T}\times T^{\frac{6\beta_X}{m} + \frac{\beta_X}{2} - \frac{1}{2}} = o(T^{-5\alpha_X\beta_X})$ as $T\to\infty$, we have

 \noindent $ Prob^*\left(\max_{j\in\mathcal{I}}\vert \sqrt{T}(\widehat{\rho}^*_j - \widehat{\rho}_j) - \frac{1}{\sqrt{T}}\sum_{i = j + 1}^T Z^*_{i, j}\vert > T^{-5\alpha_X\beta_X}\right) = o_p(1)$. From Lemma \ref{lemma.consistent_variance} and equation \ref{lemma.normal_property},
\begin{equation}
\begin{aligned}
\sup_{x\in\mathbf{R}}\vert Prob^*\left(\max_{j\in\mathcal{I}}\vert\frac{1}{\sqrt{T}}\sum_{i = j + 1}^T Z^*_{i, j}\vert\leq x\right) - H_\rho(x)\vert\\
= O_p\left((v_T\times T^{7\alpha_X\beta_X})^{1/6} + (k_T\times T^{\frac{8\beta_X}{m} + 7\alpha_X\beta_X - \frac{1}{2}})^{1/6}\right)\\
\end{aligned}
\end{equation}
Therefore
\begin{equation}
\begin{aligned}
\vert Prob^*\left(\max_{j\in\mathcal{I}}\vert\sqrt{T}(\widehat{\rho}^*_j - \widehat{\rho}_j)\vert\leq x\right) - H_\rho(x)\vert\\
\leq Prob^*\left(\max_{j\in\mathcal{I}}\vert \sqrt{T}(\widehat{\rho}^*_j - \widehat{\rho}_j) - \frac{1}{\sqrt{T}}\sum_{i = j + 1}^T Z^*_{i, j}\vert > T^{-5\alpha_X\beta_X}\right)\\
+ \sup_{x\in\mathbf{R}}\vert Prob^*\left(\max_{j\in\mathcal{I}}\vert \frac{1}{\sqrt{T}}\sum_{i = j + 1}^T Z^*_{i, j}\vert\leq x\right) - H_\rho(x)\vert
+ \sup_{x\in\mathbf{R}}\vert H_\rho(x + T^{-5\alpha_X\beta_X}) - H_\rho(x)\vert\\
\Rightarrow \sup_{x\in\mathbf{R}}\vert Prob^*\left(\max_{j\in\mathcal{I}}\vert\sqrt{T}(\widehat{\rho}^*_j - \widehat{\rho}_j)\vert\leq x\right) - H_\rho(x)\vert = o_p(1)
\end{aligned}
\end{equation}
and we prove \eqref{eq.Boot_delta_rho}.

(iii). For $p = O(1)$, $\{X_iX_{i - j} - \sigma_j\}_{i\in\mathbf{Z}}$ are $(m, \alpha - 1)$ - short range dependent random variables.
Therefore,  from \eqref{eq.half_Cov} and \eqref{eq.epsilonNorm}, for any given $\xi>0$, choose sufficiently large constant $C_\xi$ and set
$\delta = C_\xi\sqrt{k_T}\times T^{-1/2}$,
\begin{equation}
\begin{aligned}
 \max_{j = 1,...,p}\Vert\sum_{i = j+1}^T
\widehat{\epsilon}_i^{(j)}\varepsilon_i\Vert^*_{m/4} = O_p(\sqrt{k_T}\times T^{\frac{1}{2}})\\
Prob^*\left(\max_{j = 1,..., p}\vert\frac{1}{T}\sum_{i = j + 1}^T \widehat{\epsilon}_i^{(j)}\varepsilon_i\vert > \delta\right)\leq \left(\frac{C^\prime}{C_\xi}\right)^{m/4}
\end{aligned}
\end{equation}
with probability at least $1 - \xi$. From \eqref{eq.bootSigma}, section 0.9.7 in \cite{MR2978290} and \eqref{eq.defBootCov},
\begin{equation}
\begin{aligned}
\vert \widehat{\Sigma}^* - \widehat{\Sigma}\vert_2\leq 2\sum_{l = 0}^{p}\vert \frac{1}{T}\sum_{i = l + 1}^T \widehat{\epsilon}_i^{(l)}\varepsilon_i\vert
\leq 2(p + 1)C_\xi\sqrt{k_T}T^{-1/2}\\
\text{and } \vert\widehat{\gamma}^* - \widehat{\gamma}\vert_2\leq \sum_{l = 0}^{p}\vert \frac{1}{T}\sum_{i = l + 1}^T \widehat{\epsilon}_i^{(l)}\varepsilon_i\vert\leq (p + 1)C_\xi\sqrt{k_T}T^{-1/2}\\
\Rightarrow Prob^*\left(\vert \widehat{\Sigma}^* - \widehat{\Sigma}\vert_2\leq 2(p + 1)C_\xi\sqrt{k_T}T^{-1/2}\right)\geq 1 - \xi\\
\text{and } Prob^*\left(\vert\widehat{\gamma}^* - \widehat{\gamma}\vert_2\leq (p + 1)C_\xi\sqrt{k_T}T^{-1/2}\right)\geq 1 - \xi
\end{aligned}
\label{eq.prob_SEC2}
\end{equation}
with probability at least $1 - \xi$. From \eqref{eq.diff_Covariance_Matrix} and corollary 5.6.16 in \cite{MR2978290}, for sufficiently large $T$,
$\widehat{\Sigma}$'s smallest eigenvalue is greater than $c/2 > 0$ with probability at least $1 - \xi$; and
\begin{equation}
\begin{aligned}
\sqrt{T}\vert\widehat{\Sigma}^{*\dagger} - \widehat{\Sigma}^{-1}  + \widehat{\Sigma}^{-1}(\widehat{\Sigma}^* - \widehat{\Sigma})\widehat{\Sigma}^{-1}\vert_2\leq \sqrt{T}\vert\widehat{\Sigma}^{-1}\vert_2\sum_{k = 2}^\infty\vert \widehat{\Sigma}^{-1}(\widehat{\Sigma}^* - \widehat{\Sigma})\vert_2^k\\
\Rightarrow Prob^*\left(\sqrt{T}\vert\widehat{\Sigma}^{*\dagger} - \widehat{\Sigma}^{-1}  + \widehat{\Sigma}^{-1}(\widehat{\Sigma}^* - \widehat{\Sigma})\widehat{\Sigma}^{-1}\vert_2\leq Ck_T\times T^{-1/2}\right)\geq 1 - \xi
\end{aligned}
\end{equation}
with probability at least $1 - \xi$. In particular,
\begin{equation}
\begin{aligned}
\vert\sqrt{T}(\widehat{a}^* - \widehat{a}) - \sqrt{T}\left(\widehat{\Sigma}^{-1}(\widehat{\gamma}^* - \widehat{\gamma}) - \widehat{\Sigma}^{-1}(\widehat{\Sigma}^* - \widehat{\Sigma})\widehat{\Sigma}^{-1}\widehat{\gamma})\right)\vert_2\\
\leq \sqrt{T}\vert\widehat{\Sigma}^{*\dagger} - \widehat{\Sigma}^{-1}\vert_2\times \vert\widehat{\gamma}^* - \widehat{\gamma}\vert_2
+ \vert\widehat{\gamma}\vert_2\times \sqrt{T}\vert\widehat{\Sigma}^{*\dagger} - \widehat{\Sigma}^{-1} + \widehat{\Sigma}^{-1}(\widehat{\Sigma}^* - \widehat{\Sigma})\widehat{\Sigma}^{-1}\vert_2\\
\Rightarrow Prob^*\left(\vert\sqrt{T}(\widehat{a}^* - \widehat{a}) - \sqrt{T}\left(\widehat{\Sigma}^{-1}(\widehat{\gamma}^* - \widehat{\gamma}) - \widehat{\Sigma}^{-1}(\widehat{\Sigma}^* - \widehat{\Sigma})\widehat{\Sigma}^{-1}\widehat{\gamma}\right)\vert_2\leq Ck_T\times T^{-1/2}\right)\\
\geq 1 - \xi
\end{aligned}
\label{eq.aroundA}
\end{equation}
with probability at least $1 - \xi$. On the  other hand, define $\widehat{b}_{jk}$ and $\widehat{\mathbf{b}}_0,...,\widehat{\mathbf{b}}_{p}$ as in Lemma \ref{lemma.consistent_variance},  we have
\begin{equation}
\begin{aligned}
\sqrt{T}\widehat{\Sigma}^{-1}(\widehat{\gamma}^* - \widehat{\gamma}) - \sqrt{T}\widehat{\Sigma}^{-1}(\widehat{\Sigma}^* - \widehat{\Sigma})
\widehat{\Sigma}^{-1}\widehat{\gamma} = \sqrt{T}\sum_{l = 0}^{p}(\widehat{\sigma}^*_l - \widehat{\sigma}_l)\widehat{\mathbf{b}}_l\\
= \frac{1}{\sqrt{T}}\sum_{i = 1}^T\sum_{l = 0}^{\min(i - 1, p)}\widehat{\mathbf{b}}_l\widehat{\epsilon}^{(l)}_i\varepsilon_i
\end{aligned}
\end{equation}
Define $\widehat{Z}^*_{i, j} = \sum_{l = 0}^{\min(i - 1, p)}\widehat{b}_{jl}\widehat{\epsilon}^{(l)}_i\varepsilon_i$, we have
\begin{equation}
\begin{aligned}
\mathbf{E}^*\left(\frac{1}{\sqrt{T}}\sum_{i  = 1}^T \widehat{Z}^*_{i, j_1}\right)\times \left(\frac{1}{\sqrt{T}}\sum_{i = 1}^T \widehat{Z}^*_{i, j_2}\right)\\
= \frac{1}{T}\sum_{i_1 = 1}^T\sum_{i_2 = 1}^T K\left(\frac{i_1 - i_2}{k_T}\right)\sum_{l_1 = 0}^{\min(i_1 - 1, p)}
\sum_{l_2 = 0}^{\min(i_2 -  1, p)}\widehat{b}_{j_1l_1}\widehat{b}_{j_2l_2}\widehat{\epsilon}_{i_1}^{(l_1)}\widehat{\epsilon}_{i_2}^{(l_2)}
\end{aligned}
\label{eq.crossX}
\end{equation}
Define $\widehat{Z}_{i,j}$ as in \eqref{eq.defZHAT}, we have
\begin{equation}
\begin{aligned}
\eqref{eq.crossX} = \frac{1}{T}\sum_{i_1 = p + 1}^T\sum_{i_2 = p + 1}^TK\left(\frac{i_1 - i_2}{k_T}\right)\widehat{Z}_{i_1, j_1}\widehat{Z}_{i_2, j_2}\\
+  \frac{1}{T}\sum_{i_1 = 1}^{p}\sum_{i_2 = p + 1}^T K\left(\frac{i_1 - i_2}{k_T}\right)\left(\sum_{l_1 = 0}^{i_1 - 1}\widehat{b}_{j_1l_1}\widehat{\epsilon}_{i_1}^{(l_1)}\right)\times \widehat{Z}_{i_2, j_2}\\
+  \frac{1}{T}\sum_{i_1 = p + 1}^T \sum_{i_2 = 1}^{p} K\left(\frac{i_1 - i_2}{k_T}\right)\widehat{Z}_{i_1, j_1}
\times\left(\sum_{l_2 = 0}^{i_2 - 1}\widehat{b}_{j_2l_2}\widehat{\epsilon}_{i_2}^{(l_2)}\right)\\
+ \frac{1}{T}\sum_{i_1 = 1}^{p}\sum_{i_2 = 1}^{p} K\left(\frac{i_1 - i_2}{k_T}\right)\left(\sum_{l_1 = 0}^{i_1 - 1}\widehat{b}_{j_1l_1}\widehat{\epsilon}_{i_1}^{(l_1)}\right)\times\left(\sum_{l_2 = 0}^{i_2 - 1}\widehat{b}_{j_2l_2}\widehat{\epsilon}_{i_2}^{(l_2)}\right)
\end{aligned}
\end{equation}
Therefore,
\begin{equation}
\begin{aligned}
\vert \mathbf{E}^*\left(\frac{1}{\sqrt{T}}\sum_{i  = 1}^T \widehat{Z}^*_{i, j_1}\right)\times \left(\frac{1}{\sqrt{T}}\sum_{i = 1}^T \widehat{Z}^*_{i, j_2}\right) - \frac{1}{T}\sum_{i_1 = j_1 + 1}^T\sum_{i_2 = j_2 + 1}^TK\left(\frac{i_1 - i_2}{k_T}\right)\widehat{Z}_{i_1,j_1}\widehat{Z}_{i_2, j_2}\vert\\
\leq \vert  \frac{1}{T}\sum_{i_1 = 1}^{p}\sum_{i_2 = p + 1}^T K\left(\frac{i_1 - i_2}{k_T}\right)\left(\sum_{l_1 = 0}^{i_1 - 1}\widehat{b}_{j_1l_1}\widehat{\epsilon}_{i_1}^{(l_1)}\right)\times \widehat{Z}_{i_2, j_2}\vert\\
+ \vert \frac{1}{T}\sum_{i_1 = p + 1}^T \sum_{i_2 = 1}^{p} K\left(\frac{i_1 - i_2}{k_T}\right)\widehat{Z}_{i_1, j_1}
\times\left(\sum_{l_2 = 0}^{i_2 - 1}\widehat{b}_{j_2l_2}\widehat{\epsilon}_{i_2}^{(l_2)}\right)\vert\\
+ \vert \frac{1}{T}\sum_{i_1 = 1}^{p}\sum_{i_2 = 1}^{p} K\left(\frac{i_1 - i_2}{k_T}\right)\left(\sum_{l_1 = 0}^{i_1 - 1}\widehat{b}_{j_1l_1}\widehat{\epsilon}_{i_1}^{(l_1)}\right)\times\left(\sum_{l_2 = 0}^{i_2 - 1}\widehat{b}_{j_2l_2}\widehat{\epsilon}_{i_2}^{(l_2)}\right)\vert\\
+ \vert\frac{1}{T}\sum_{i_1 = j_1 + 1}^{p}\sum_{i_2 = p + 1}^T K\left(\frac{i_1 - i_2}{k_T}\right)\widehat{Z}_{i_1, j_1}\widehat{Z}_{i_2, j_2}\vert\\
+ \vert\frac{1}{T}\sum_{i_1 = p + 1}^T\sum_{i_2 = j_2 + 1}^{p} K\left(\frac{i_1 - i_2}{k_T}\right)\widehat{Z}_{i_1, j_1}\widehat{Z}_{i_2, j_2}\vert\\
+ \vert\frac{1}{T}\sum_{i_1 = j_1 + 1}^{p}\sum_{i_2 = j_2 + 1}^{p} K\left(\frac{i_1 - i_2}{k_T}\right)\widehat{Z}_{i_1, j_1}\widehat{Z}_{i_2, j_2}\vert
\end{aligned}
\end{equation}
From \eqref{eq.zeta_minus_zeta}, \eqref{eq.delta_beta} and \eqref{eq.maxAUTO},
\begin{equation}
\begin{aligned}
\sqrt{\sum_{i =  1}^T \widehat{Z}_{i, j}^2}\leq 2\sqrt{\sum_{i = 1}^T Z_{i, j}^2 + \sum_{i = 1}^T (\widehat{Z}_{i, j} - Z_{i,j})^2} = O_p(\sqrt{T})\\
\sqrt{\sum_{i = 1}^{p} \widehat{Z}_{i, j}^2}\leq 2\sqrt{\sum_{i = 1}^{p} Z_{i, j}^2 + \sum_{i = 1}^{p} (\widehat{Z}_{i, j} - Z_{i,j})^2} = O_p(1)\\
\text{and  for $i = 1,...,p$, }  \left(\sum_{l = 0}^{i - 1}\widehat{b}_{jl}\widehat{\epsilon}_{i}^{(l)}\right)^2\leq 4(\sum_{l = 0}^{i - 1}b_{jl}\epsilon_i^{(l)})^2
+ 4\left(\sum_{l = 0}^{i - 1}b_{jl}(\widehat{\sigma}_l - \sigma_l)\right)^2\\
+ 4\left(\sum_{l = 0}^{i - 1}(\widehat{b}_{jl} - b_{jl})(\widehat{\sigma}_l - \sigma_l)\right)^2 + 4\left(\sum_{l = 0}^{i - 1}
(\widehat{b}_{jl} - b_{jl})\epsilon_i^{(l)}\right)^2 = O_p(1)
\end{aligned}
\end{equation}
Therefore,

\begin{equation}
\begin{aligned}
\vert \sum_{i_1 = 1}^{p}\sum_{i_2 = p + 1}^T K\left(\frac{i_1 - i_2}{k_T}\right)\left(\sum_{l_1 = 0}^{i_1 - 1}\widehat{b}_{j_1l_1}\widehat{\epsilon}_{i_1}^{(l_1)}\right)\times \widehat{Z}_{i_2, j_2}\vert\\
\leq 2\left(\sum_{j = 0}^\infty K\left(\frac{j}{k_T}\right)\right)\times \sqrt{\sum_{i_1 = 1}^{p} \left(\sum_{l_1 = 0}^{i_1 - 1}\widehat{b}_{j_1l_1}\widehat{\epsilon}_{i_1}^{(l_1)}\right)^2}\times \sqrt{\sum_{i_2 =  1}^T \widehat{Z}_{i_2, j_2}^2} = O_p\left(k_T\sqrt{T}\right)\\
\vert\sum_{i_1 = 1}^{p}\sum_{i_2 = 1}^{p} K\left(\frac{i_1 - i_2}{k_T}\right)\left(\sum_{l_1 = 0}^{i_1 - 1}\widehat{b}_{j_1l_1}\widehat{\epsilon}_{i_1}^{(l_1)}\right)\times\left(\sum_{l_2 = 0}^{i_2 - 1}\widehat{b}_{j_2l_2}\widehat{\epsilon}_{i_2}^{(l_2)}\right)\vert\\
\leq 2\left(\sum_{j = 0}^\infty K\left(\frac{j}{k_T}\right)\right)\times \sqrt{\sum_{i_1 = 1}^{p} \left(\sum_{l_1 = 0}^{i_1 - 1}\widehat{b}_{j_1l_1}\widehat{\epsilon}_{i_1}^{(l_1)}\right)^2}\times \sqrt{\sum_{i_2 = 1}^{p} \left(\sum_{l_2 = 0}^{i_2 - 1}\widehat{b}_{j_2l_2}\widehat{\epsilon}_{i_2}^{(l_2)}\right)^2} = O_p(k_T)\\
\vert\sum_{i_1 = j_1 + 1}^{p}\sum_{i_2 = p + 1}^T K\left(\frac{i_1 - i_2}{k_T}\right)\widehat{Z}_{i_1, j_1}\widehat{Z}_{i_2, j_2}\vert\\
\leq 2\left(\sum_{j = 0}^\infty K\left(\frac{j}{k_T}\right)\right)\sqrt{\sum_{i_1 = j_1 + 1}^{p} \widehat{Z}_{i_1, j_1}^2}\times
\sqrt{\sum_{i_2 = 1}^T\widehat{Z}_{i_2, j_2}^2} = O_p(k_T\sqrt{T})\\
\vert\sum_{i_1 = j_1 + 1}^{p}\sum_{i_2 = j_2 + 1}^{p} K\left(\frac{i_1 - i_2}{k_T}\right)\widehat{Z}_{i_1, j_1}\widehat{Z}_{i_2, j_2}\vert\\
\leq 2\left(\sum_{j = 0}^\infty K\left(\frac{j}{k_T}\right)\right)\sqrt{\sum_{i_1 = j_1 + 1}^{p} \widehat{Z}_{i_1, j_1}^2 }
\times \sqrt{\sum_{i_2 = j_2 + 1}^{p} \widehat{Z}_{i_2, j_2}^2} = O_p(k_T)
\end{aligned}
\end{equation}
Therefore,
\begin{equation}
\begin{aligned}
\vert \mathbf{E}^*\left(\frac{1}{\sqrt{T}}\sum_{i  = 1}^T \widehat{Z}^*_{i, j_1}\right)\times \left(\frac{1}{\sqrt{T}}\sum_{i = 1}^T \widehat{Z}^*_{i, j_2}\right) - \frac{1}{T}\sum_{i_1 = j_1 + 1}^T\sum_{i_2 = j_2 + 1}^TK\left(\frac{i_1 - i_2}{k_T}\right)\widehat{Z}_{i_1,j_1}\widehat{Z}_{i_2, j_2}\vert\\
= O_p(\frac{k_T}{\sqrt{T}})
\end{aligned}
\end{equation}
From lemma \ref{lemma.consistent_variance} and lemma \ref{lemma.normal_property}, we have
\begin{equation}
\begin{aligned}
\sup_{x\in\mathbf{R}}\vert Prob^*\left(\max_{j = 1,...,p}\vert \frac{1}{\sqrt{T}}\sum_{i  = 1}^T \widehat{Z}^*_{i, j}\vert\leq x\right) - H_a(x)\vert = o_p(1)
\end{aligned}
\end{equation}
From \eqref{eq.aroundA}, for any given $\xi > 0$; with probability at least $1 - \xi$
\begin{equation}
\begin{aligned}
Prob^*\left(\max_{j = 1,...,p}\sqrt{T}\vert\widehat{a}^*_j - \widehat{a}_j\vert\leq x\right)\leq \xi\\
 + Prob^*\left(\max_{j = 1,...,p}\vert \frac{1}{\sqrt{T}}\sum_{i  = 1}^T \widehat{Z}^*_{i, j}\vert \leq x + Ck_T\times T^{-1/2}\right)\\
 \text{and } Prob^*\left(\max_{j = 1,...,p}\sqrt{T}\vert\widehat{a}^*_j - \widehat{a}_j\vert\leq x\right)\geq -\xi\\
 + Prob^*\left(\max_{j = 1,...,p}\vert \frac{1}{\sqrt{T}}\sum_{i  = 1}^T \widehat{Z}^*_{i, j}\vert \leq x - Ck_T\times T^{-1/2}\right)
\end{aligned}
\end{equation}
From Lemma \ref{lemma.normal_property}, we prove \eqref{eq.Boot_delta_a}.

\end{proof}
\end{document}